\newcommand{\est}{\eta}
\newcommand\0{\boldsymbol{0}}
\newcommand{\eps}{\varepsilon}
\newcommand{\y}{\mathbf{y}}
\newcommand{\G}{\Gamma}
\newcommand{\N}{\mathbb{N}}
\renewcommand{\P}{\mathbb{P}}
\newcommand{\R}{\mathbb{R}}
\newcommand{\V}{\mathbb{V}}
\newcommand{\X}{\mathbb{X}}
\renewcommand{\AA}{\mathcal{A}}
\newcommand{\GG}{\mathcal{G}}
\newcommand{\EE}{\mathcal{E}}
\newcommand{\MM}{\mathcal{M}}
\newcommand{\NN}{\mathcal{N}}
\newcommand{\PP}{\mathcal{P}}
\newcommand{\RR}{\mathcal{R}}
\renewcommand{\SS}{\mathcal{S}}
\newcommand{\TT}{\mathcal{T}}
\newcommand\III{\mathfrak{I}}
\newcommand\MMM{\mathfrak{M}}
\newcommand\PPP{\mathfrak{P}}
\newcommand\QQQ{\mathfrak{Q}}
\newcommand{\Cmark}{C_{\vartheta}}
\newcommand{\Cthm}{C_{\mathrm{thm}}}
\newcommand{\qlin}{q_{\mathrm{lin}}}
\newcommand{\qsat}{q_{\mathrm{sat}}}
\DeclareMathOperator*{\hull}{span}
\DeclareMathOperator*{\refine}{refine}
\DeclareMathOperator*{\supp}{supp}
\newcommand{\enorm}[3][]{#1|\!#1|\!#1|\,#2\,#1|\!#1|\!#1|_{#3}}
\newcommand{\norm}[3][]{#1\|#2#1\|_{#3}}
\newcommand{\dpi}{\mathrm{d}\pi}
\newcommand{\dx}{\mathrm{d}x}
\newcommand{\cost}{\mathsf{cost}}
\newcommand{\tol}{\mathsf{tol}}
\newcommand{\thetaX}{\theta_{\mathbb{X}}}
\newcommand{\thetaP}{\theta_{\mathbb{P}}}
\newcommand{\uref}{u_{\mathrm{ref}}}
\newcommand{\Rtildel}{\widetilde\RR_\ell}
\newcommand{\Mtildel}{\widetilde\MMM_\ell}
\pgfplotsset{
every axis/.append style={
font={\fontsize{8pt}{12pt}\selectfont},  
},
tick label style={font=\tiny},
title style={font=\tiny,yshift=-1.5ex},
xlabel style={font=\tiny,yshift=+1.0ex},
ylabel style={font=\tiny,yshift=-1.2ex},
}
\newcommand{\cellvsp}{\\[-4pt]}
\newcommand{\fontsizetwo}{0.80em}
\newcommand{\fontsizethree}{0.7em}
\newcommand{\smallfontthree}[1]{ {\fontsize{\fontsizethree}{\fontsizetwo}\selectfont{#1}} }
\definecolor{myBrown}{rgb}{0.6 0.4 0.2}
\definecolor{myOrange}{rgb}{1.0 0.6 0.2}
\definecolor{myLightGray}{RGB}{235,235,235}
\definecolor{myViolet}{RGB}{153,50,204}
\newtheorem{theorem}{Theorem}
\newtheorem{proposition}[theorem]{Proposition}
\newtheorem{lemma}[theorem]{Lemma}
\newtheorem{corollary}[theorem]{Corollary}
\newtheorem{algorithm}[theorem]{Algorithm}
\newtheorem{remark}[theorem]{Remark}
\newtheorem{marking}{Marking criterion}
\def\@seccntformat#1{%
  \protect\textup{\protect\@secnumfont
    \ifnum\pdfstrcmp{subsection}{#1}=0 \bfseries\fi
    \csname the#1\endcsname
    \protect\@secnumpunct
  }%
}
\title{Convergence of adaptive stochastic Galerkin FEM}
\author{Alex Bespalov}
\address{School of Mathematics, University of Birmingham, Edgbaston, Birmingham B15 2TT, UK}
\email{a.bespalov@bham.ac.uk}
\author{Dirk Praetorius}
\address{Institute for Analysis and Scientific Computing, TU Wien, Wiedner Hauptstra\ss{}e~8--10, 1040 Vienna, Austria}
\email{dirk.praetorius@asc.tuwien.ac.at}
\author{Leonardo Rocchi}
\address{School of Mathematics, University of Birmingham, Edgbaston, Birmingham B15 2TT, UK}
\email{lxr507@bham.ac.uk}
\author{Michele Ruggeri}
\address{Faculty of Mathematics, University of Vienna, Oskar-Morgenstern-Platz~1, 1090 Vienna, Austria}
\email{michele.ruggeri@univie.ac.at}
\subjclass[2010]{35R60, 65C20, 65N12, 65N15, 65N30}
\keywords{adaptive methods, a~posteriori error analysis, convergence, two-level error estimate,
stochastic Galerkin methods, finite element methods, parametric PDEs}
\thanks{{\em Acknowledgements.} This work was initiated and part of it was undertaken
when AB visited the Institute for Analysis and Scientific Computing at TU Wien in 2018.
This author wishes to thank
the colleagues in that Institute for hospitality and stimulating research atmosphere.
The work of AB and LR was supported by the EPSRC under grant EP/P013791/1.
The work of DP and MR was supported by the Austrian Science Fund (FWF) under grants W1245 and F65.
}
\date{\today}
\begin{document}

\begin{abstract}
We propose and analyze novel adaptive algorithms for the numerical solution
of elliptic partial differential equations with parametric uncertainty.
Four different marking strategies are employed for refinement of stochastic
Galerkin finite element approximations.
The algorithms are driven by the energy error reduction estimates derived from
two-level \textsl{a~posteriori} error indicators for spatial approximations and
hierarchical \textsl{a~posteriori} error indicators for parametric approximations.
The focus of this work is on the mathematical foundation of the
adaptive algorithms in the sense of rigorous convergence analysis.
In particular, we prove that the proposed algorithms
drive the underlying energy error estimates to zero.
\end{abstract}

\maketitle
\thispagestyle{fancy}

\section{Introduction}

The design and analysis of adaptive algorithms for the numerical solution of
partial differential equations (PDEs) with parametric or uncertain inputs
have been active research themes in the last decade.
Adaptive algorithms are indispensable when solving a particularly challenging class of parametric problems
represented by PDEs whose inputs depend (e.g., in an affine way) on infinitely many uncertain parameters.
For this class of problems, adaptive algorithms have been shown, on the one hand,
to yield approximations that are immune to the curse of dimensionality and, on the other hand,
to outperform standard sampling methods (see \cite{CohenDeVore15,CohenDeVoreSchwab10}).

It is well known in the finite element community that 
adaptive strategies based on rigorous \textsl{a~posteriori} error analysis of computed solutions
provide an effective mechanism for building approximation spaces and accelerating convergence.
Several adaptive strategies of this type have been proposed
in the context of stochastic Galerkin finite element method (sGFEM) for PDE problems
with parametric or uncertain inputs.
Typically, they are developed by extending the \textsl{a~posteriori} error estimation techniques
commonly used for deterministic problems to parametric settings.
For example, dual-based \textsl{a~posteriori} error estimates are employed in~\cite{mathelinLeMaitre2007};
implicit error estimators (in the spirit of~\cite{AinsworthOden00}) are used in~\cite{WanKarniadakis2009}
for the sGFEM based on multi-element generalized polynomial chaos expansions;
explicit residual-based \textsl{a~posteriori} error estimators provide spatial and stochastic error indicators for
adaptive refinement in~\cite{gittelson13,egsz14, egsz15};
local equilibration error estimators are utilized in~\cite{em16};
and hierarchical error estimators and the associated estimates of error reduction drive adaptive algorithms
proposed in~\cite{bs16,br18,bprr18,CrowderPowellBespalov,KhanBespalovPowellSilvester}.

In contrast to the design of algorithms, convergence analysis of adaptive sGFEM is much less developed.
In~\cite{egsz15}, convergence of the adaptive algorithm driven by residual-based error estimators
is proved in the spirit of the convergence analysis for deterministic FEM in~\cite{CKNS08};
moreover, the quasi-optimality of the generated sequence of meshes, in a suitable sense, is established.
The analysis in~\cite{egsz15}, however, requires that the adaptive algorithm enforces
additional spatial refinements during the iterations where parametric enrichment
is performed (see \cite[Section~6]{egsz15}).
This is caused by a purely theoretical artifact associated with using inverse estimates for
the residual-based error estimators (see~\cite[\S6.1]{egsz15}).

In this paper, we study convergence of adaptive algorithms
which are driven by the energy error reduction estimates derived from
two-level \textsl{a~posteriori} error indicators for spatial approximations and
hierarchical \textsl{a~posteriori} error indicators for parametric approximations.
The underlying \textsl{a~posteriori} error estimate that combines these two types of indicators has been
recently introduced and analyzed in~\cite{bprr18}.
We employ four practical marking criteria which are combinations of 
D\"orfler~\cite{doerfler} and maximum~\cite{bv84} marking strategies.
At each step, the algorithm performs either solely mesh refinement or solely polynomial enrichment.
Our central result in Theorem~\ref{thm:plain_convergence} shows that each proposed adaptive algorithm generates
a sequence of Galerkin approximations such that the corresponding sequence of energy error estimates converges to zero.
Therefore, this result provides a theoretical guarantee that, for any given positive tolerance,
the algorithms stop after a finite number of iterations.
We note in Remark~\ref{remark:main_thm} that the proof of Theorem~\ref{thm:plain_convergence} is given
for more general marking strategies, which are inspired by~\cite[\S2.2]{msv08}.
As an immediate consequence of Theorem~\ref{thm:plain_convergence}, we show that,
under the saturation assumption, the Galerkin approximations generated by the algorithms
converge to the true parametric solution (Corollary~\ref{cor:plain_convergence}).
Further to that, in the case of D\"orfler marking, we prove linear convergence of the energy error in
Theorem~\ref{thm:linear_convergence}.

We note that, although the results in this paper are presented for a simple model problem---steady-state diffusion
equation whose coefficient has affine dependence on infinitely many parameters---our analysis will apply to
more general elliptic linear problems with affine-parametric inputs
(e.g., to linear elasticity models, see~\cite{KhanBespalovPowellSilvester})
as well as in the context of goal-oriented adaptivity (see~\cite{bprr18}).

The paper is organized as follows.
Section~\ref{sec:problem} introduces the parametric model problem and its weak formulation.
In Section~\ref{sec:discretization}, we introduce the approximation spaces, define sGFEM formulations,
and recall the \textsl{a~posteriori} error estimates derived in~\cite{bprr18}.
In Section~\ref{sec:main:results}, we present adaptive algorithms with four different marking criteria
and formulate the main results of this work.
The results of numerical experiments are reported in Section~\ref{sec:numer:results}, where, in particular,
we compare the computational cost associated with employing different marking criteria.
Technical details and the proofs of theorems are given in Sections~\ref{section:plain_convergence}--\ref{section:linear_convergence}.
The results of a more extensive experimental study of the computational cost associated with
different marking criteria for a range of marking parameters are presented in Appendix~\ref{sec:appendix}.

\section{Parametric model problem} \label{sec:problem}

Let $D \subset \R^d$ ($d = 2, 3$) be a bounded Lipschitz domain with polytopal boundary $\partial D$
and let $\G := \prod_{m=1}^\infty [-1,1]$ denote the infinitely-dimensional hypercube.
We consider the elliptic boundary value problem
\begin{equation} \label{eq:strongform}
\begin{aligned}
 -\nabla \cdot (a \nabla u) &= f \quad &&\text{in } D \times \G,\\
 u & = 0 \quad &&\text{on } \partial D \times \G,
\end{aligned}
\end{equation}
where the scalar coefficient $a$ and the right-hand side function $f$ (and, hence, the solution~$u$)
depend on a countably infinite number of scalar parameters, i.e.,
$a = a(x, \y)$, $f = f(x, \y)$, and $u = u(x, \y)$ with $x \in D$ and $\y \in \G$.
For the coefficient $a$, we assume linear dependence on the parameters, i.e.,
\begin{equation} \label{eq1:a}
a(x,\y) = a_0(x) + \sum_{m=1}^\infty y_m a_m(x)
\quad \text{for } x \in D \text{ and } \y = (y_m)_{m\in\N} \in \G,
\end{equation}
whereas for the right-hand side of \eqref{eq:strongform} we assume that $f \,{\in}\, {L^2_\pi(\G;H^{-1}(D))}$.
Here, $\pi \,{=}\, \pi(\y)$ is a probability measure on $(\G,\mathcal{B}(\G))$ with
$\mathcal{B}(\G)$ being the Borel $\sigma$-algebra on $\G$, and
we assume that $\pi(\y)$ is the product of symmetric Borel probability measures $\pi_m$ on~$[-1,1]$,
i.e., $\pi(\y) = \prod_{m=1}^\infty \pi_m(y_m)$.

The scalar functions $a_m \in W^{1,\infty}(D)$ ($m \in \N_0$) in~\eqref{eq1:a} are required to satisfy the
following inequalities
\begin{equation}\label{eq2:a}
 0 < a_0^{\rm min} \le a_0(x) \le a_0^{\rm max} < \infty
 \quad \text{for almost all } x \in D
\end{equation}
and
\begin{equation}\label{eq3:a}
 \tau := \frac{1}{a_0^{\rm min}} \, \sum_{m=1}^\infty \norm{a_m}{L^\infty(D)} < 1.
\end{equation}
With the Sobolev space $\X := H^1_0(D)$, consider the Bochner space $\V := L^2_\pi(\G;\X)$.  
On $\V$, define the bilinear forms
\begin{align*}
B_0(u,v) &:= \int_\G \int_D a_0(x) \nabla u(x,\y)\cdot\nabla v(x,\y) \, \dx \, \dpi(\y),
\\
B(u,v) &:= B_0(u,v) + \sum_{m=1}^\infty \int_\G \int_D y_ma_m(x) \nabla u(x,\y)\cdot\nabla v(x,\y) \, \dx \, \dpi(\y).
\end{align*}
An elementary computation shows that assumptions~\eqref{eq1:a}--\eqref{eq3:a} ensure that the bilinear 
forms $B_0(\cdot,\cdot)$ and $B(\cdot,\cdot)$ are symmetric, continuous, and elliptic on $\V$.
Let $\enorm{\cdot}{}$ (resp., $\enorm{\cdot}{0}$) denote the norm induced by $B(\cdot,\cdot)$ (resp., $B_0(\cdot,\cdot)$).
Then, there holds
\begin{equation}\label{eq:lambda}
 \lambda \, \enorm{v}{}^2 \le \enorm{v}{0}^2 \le \Lambda \, \enorm{v}{}^2
\quad \text{for all } v \in \V,
\end{equation}
where
$0 < \lambda := \frac{a_0^{\rm min}}{a_0^{\rm max}\,(1+\tau)} < 1 <
  \Lambda := \frac{a_0^{\rm max}}{a_0^{\rm min}\,(1-\tau)} < \infty$.

The parametric problem~\eqref{eq:strongform} is understood in the weak sense:
Given $f \in L^2_\pi(\G;H^{-1}(D))$, find $u \in \V$ such that
\begin{equation} \label{eq:weakform}
B(u,v) = F(v) := \int_\G \int_D f(x,\y) v(x,\y) \, \dx \, \dpi(\y)
\quad \text{for all } v \in \V.
\end{equation}
The existence and uniqueness of the solution $u \in \V$ to~\eqref{eq:weakform} follow by the Riesz theorem.

\section{Finite element discretization and a~posteriori error analysis} \label{sec:discretization}

\subsection{Approximation spaces}

Let $\TT_\bullet$ be a mesh, i.e., a conforming triangulation of $D$ into
compact non-degenerate simplices $T \in \TT_\bullet$ (e.g., triangles for $d = 2$).
Let $\EE_\bullet$ be the corresponding set of facets (e.g., edges for $d = 2$).
Let $\EE_\bullet^{\rm int} \subset \EE_\bullet$ be the set of interior facets, i.e.,
for each $E \in \EE_\bullet^{\rm int}$, there exist unique $T, T' \in \TT_\bullet$ such that $E = T \cap T'$.
Let $\NN_\bullet$ be the set of vertices of $\TT_\bullet$. For $z \in \NN_\bullet$, let $\varphi_{\bullet,z}$
be the associated hat function, i.e., $\varphi_{\bullet,z}$ is piecewise affine, globally continuous,
and satisfies the Kronecker property $\varphi_{\bullet,z}(z') = \delta_{zz'}$ for all $z' \in \NN_\bullet$.
We consider the space of continuous piecewise linear finite elements
\begin{equation*}
 \X_\bullet := \SS^1_0(\TT_\bullet) := \{v_\bullet \in \X : v_\bullet \vert_T \text{ is affine for all } T \in \TT_\bullet \} \subset \X = H^1_0(D).
\end{equation*}
Recall that $\{ \varphi_{\bullet,z} : z \in \NN_\bullet \setminus \partial D \}$ is the standard basis of $\X_\bullet$.

Let us now introduce the polynomial spaces on $\G$.
For each $m \in \N$, let $(P_n^m)_{n\in\N_0}$ denote the sequence of univariate polynomials
which are orthogonal with respect to $\pi_m$ such that $P_n^m$ is a polynomial of degree
$n \in \N_0$ with $\norm{P_n^m}{L^2_{\pi_m}(-1,1)}=1$ and $P_0^m \equiv 1$.
It is well known that $\{P_n^m : n\in\N_0 \}$ is an orthonormal basis of $L^2_{\pi _m}(-1,1)$.
With $\N_0^\N := \{\nu = (\nu_m)_{m\in\N} : \nu_m\in\N_0 \text{ for all } m\in\N \}$
and $\supp(\nu):= \{m\in\N : \nu_m\neq 0 \}$, let $ \III := \{\nu \in \N_0^{\N} : \#\supp(\nu) < \infty \}$ be the set of finitely supported multi-indices.
Note that $\III$ is countable. With
\begin{equation*}
P_\nu(\y)
:= \prod_{m\in\N} P_{\nu_m}^m(y_m)
= \prod_{m\in\supp(\nu)} P_{\nu_m}^m(y_m)
\quad \text{for all } \nu \in \III \text{ and all } \y \in \G,
\end{equation*}
the set $\{ P_\nu : \nu\in\III \}$ is an orthonormal basis of $\P := L^2_\pi(\G)$; see~\cite[Theorem~2.12]{sg11}.

The Bochner space $\V = L^2_\pi(\G;\X)$ is isometrically isomorphic to $\X \otimes \P$ and
each function $v \in \V$ can be represented in the form
\begin{equation}\label{eq:representation}
 v(x,\y) = \sum_{\nu \in \III} v_\nu(x) P_\nu(\y)
 \quad\text{with unique coefficients }
 v_\nu \in \X.
\end{equation}
Moreover, there holds (see~\cite[Lemma~2.1]{bprr18})
\begin{equation} \label{eq1:lemma:orthogonal}
B_0(v,w) = \sum_{\nu \in \III} \int_D a_0(x) \, \nabla v_\nu(x) \cdot \nabla w_\nu(x) \, \dx
\quad \text{for all } v,w \in \V
\end{equation}
and, in particular,
\begin{equation}
\label{eq2:lemma:orthogonal}
\enorm{v}{0}^2
= \sum_{\nu \in \III} \norm{a_0^{1/2}\nabla v_\nu}{L^2(D)}^2
= \sum_{\nu \in \III} \enorm{v_\nu P_\nu}{0}^2
\quad \text{for all } v \in \V.
\end{equation}

Let $\0 = (0,0,\dots)$ denote the zero index,
and let $\PPP_\bullet \subset \III$ be a finite index set such that $\0 \in \PPP_\bullet$.
We denote by $\supp(\PPP_\bullet) := \bigcup_{\nu \in \PPP_\bullet} \supp(\nu)$ the set of active parameters in~$\PPP_\bullet$.

Our discretization of~\eqref{eq:weakform} is based on the finite-dimensional tensor-product space
\begin{equation*}
\V_\bullet
:= \X_\bullet \otimes \P_\bullet
\subset \X \otimes \P
= \V
\quad \text{with} \quad
\P_\bullet := \hull\{P_\nu : \nu \in \PPP_\bullet \} \subset \P = L^2_\pi(\G).
\end{equation*}
The Galerkin discretization of~\eqref{eq:weakform} reads as follows:
Find $u_\bullet \in \V_\bullet$ such that
\begin{equation}\label{eq:discrete_formulation}
 B(u_\bullet, v_\bullet) = F(v_\bullet) 
 \quad \text{for all } v_\bullet \in \V_\bullet. 
\end{equation}
Again, the Riesz theorem proves the existence and uniqueness of the solution $u_\bullet \in \V_\bullet$.

\subsection{Mesh refinement and parametric enrichment}

\begin{figure}[b]
\centering
\includegraphics[width=.28\textwidth]{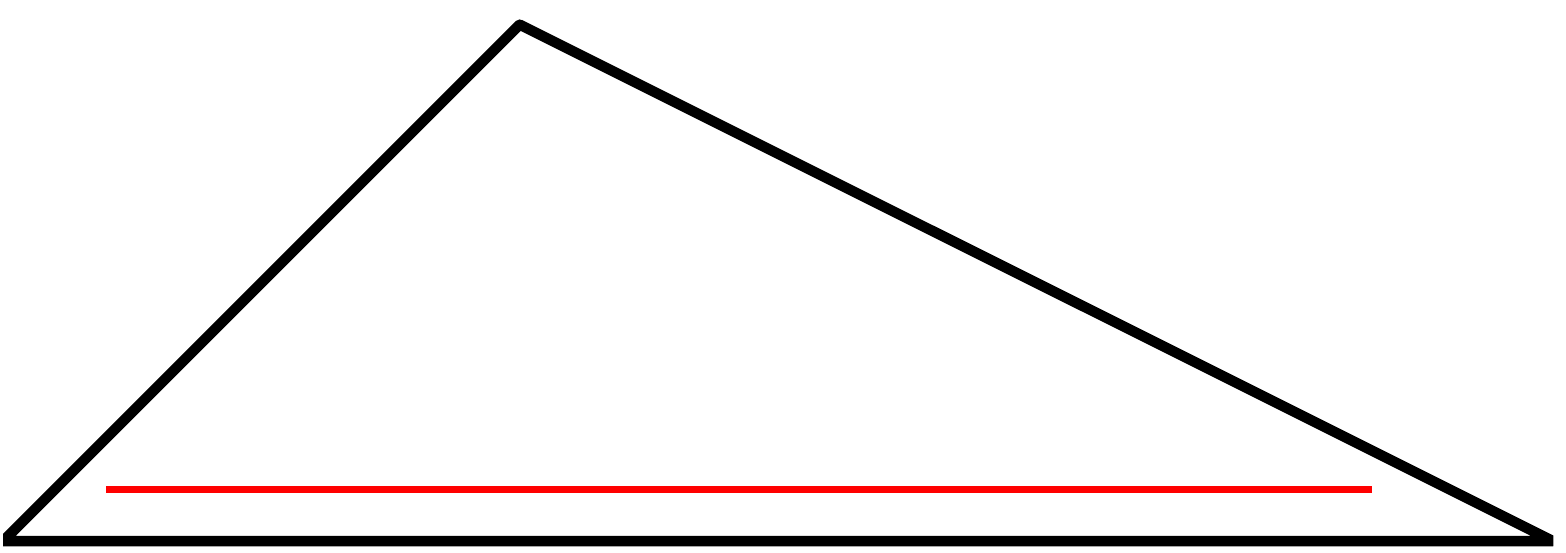} \quad
\includegraphics[width=.28\textwidth]{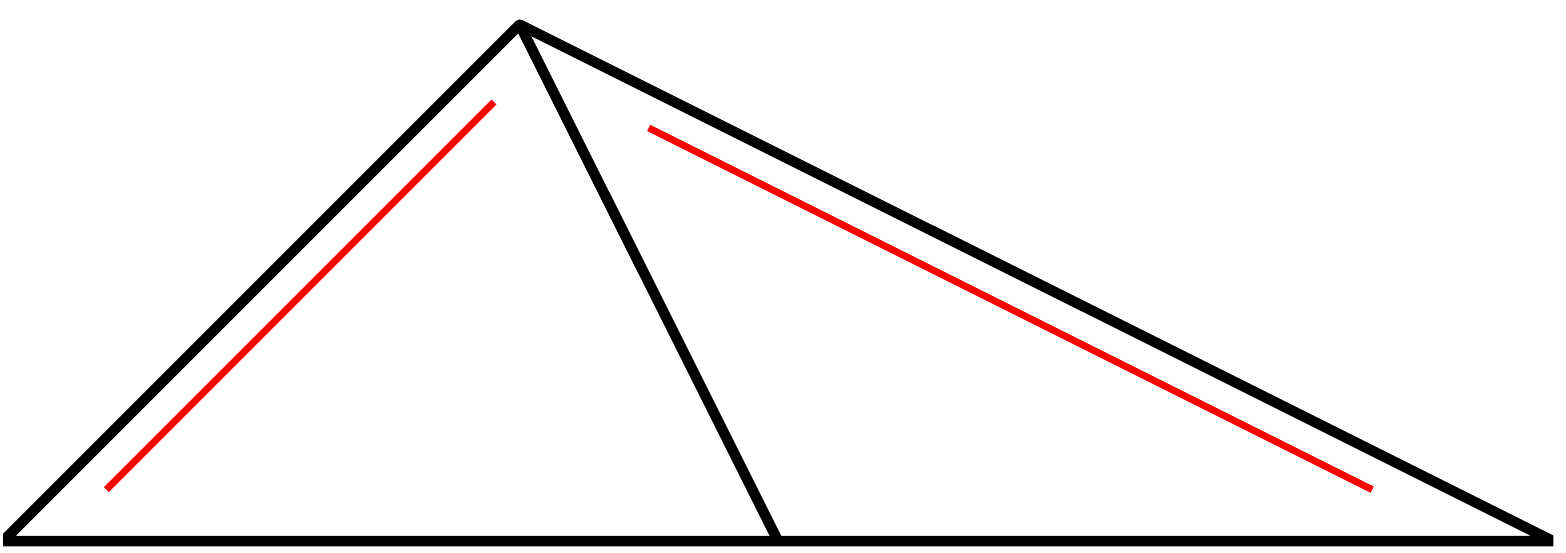} \quad
\includegraphics[width=.28\textwidth]{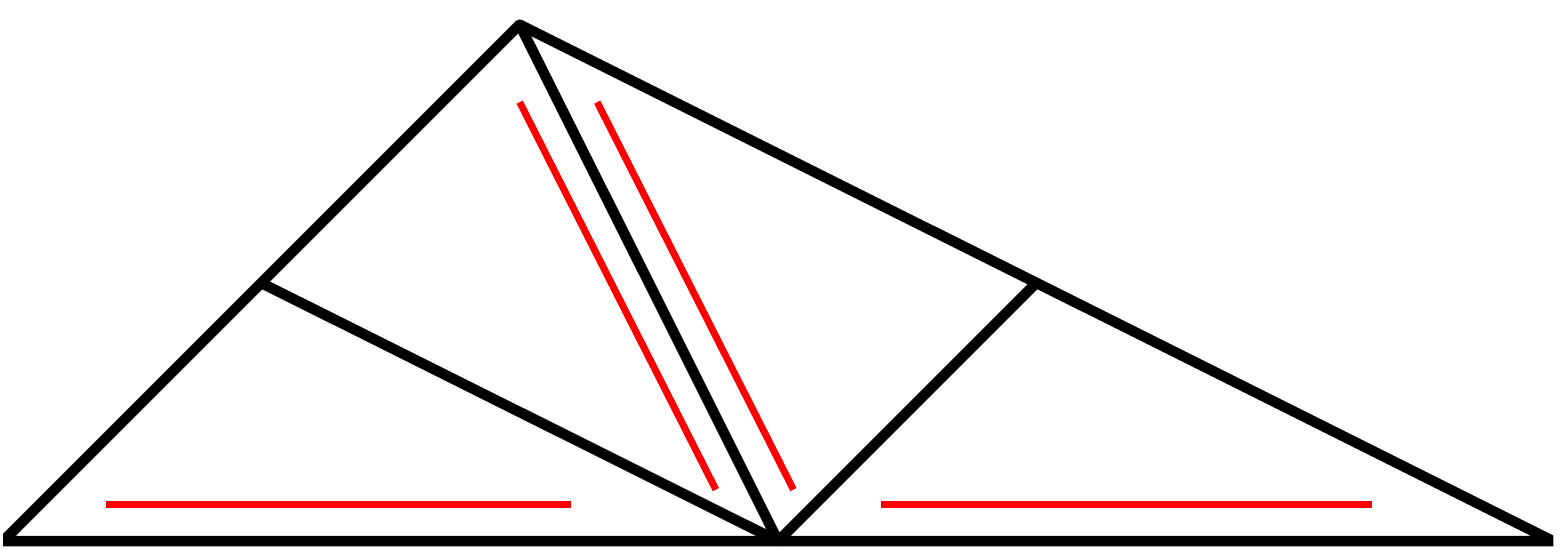} \quad
\caption{For NVB in 2D, each triangle $T\in\mathcal{T}_\bullet$ has one \emph{reference edge},
indicated by the double line (left). 
Bisection of $T$ is achieved by halving the reference edge.
The reference edges of the sons are always opposite to the new vertex (middle).
Recursive application of this rule leads to conforming meshes.
After three bisections per element all edges of a triangle are halved (right).
If all elements $T\in\mathcal{T}_\bullet$ are refined by three bisections, the resulting uniform refinement is conforming.}
\label{fig:nvb2d}
\end{figure}

For mesh refinement, we employ newest vertex bisection (NVB);
see Figure~\ref{fig:nvb2d} for $d = 2$ and, e.g., \cite[Figure~2]{egp18+} for $d=3$ as well as~\cite{stevenson,kpp}.
We assume that any mesh $\TT_\bullet$ employed for the spatial discretization
can be obtained by applying NVB refinement(s) to a given initial mesh $\TT_0$.

For a given mesh $\TT_\bullet$, let $\widehat\TT_\bullet$ be the coarsest mesh obtained from $\TT_\bullet$ such that:
(i) for $d=2$, all edges of $\TT_\bullet$ have been bisected once
(which corresponds to uniform refinement of all elements by three bisections; see Figure~\ref{fig:nvb2d});
(ii) for $d = 3$, all faces contain an interior vertex
(see~\cite[Figure~3]{egp18+} and the associated discussion therein).
Then $\widehat\NN_\bullet$ denotes the set of vertices of $\widehat\TT_\bullet$ and
$\{ \widehat\varphi_{\bullet,z} : z \in \widehat\NN_\bullet \}$ is the corresponding set of hat functions.
The finite element space associated with $\widehat\TT_\bullet$ is denoted by
$\widehat\X_\bullet := \SS^1_0(\widehat\TT_\bullet)$.
With $\NN_\bullet^+ := (\widehat\NN_\bullet \setminus \NN_\bullet) \setminus \partial D$
being the set of new interior vertices created by uniform refinement of $\TT_\bullet$,
one has $\widehat\X_\bullet = \X_\bullet \oplus \hull\{ \widehat\varphi_{\bullet,z} : z \in \NN_\bullet^+ \}$.
For a later use, we note that there exists a constant $K \ge 1$ depending only on the initial mesh $\TT_0$
such that
\begin{equation} \label{eq:const:K}
\# \{ z \in \NN_\bullet^+ : |T \cap \supp(\widehat\varphi_{\bullet,z})|>0 \} \le K < \infty\quad
\hbox{for all $T \in \TT_\bullet$}.
\end{equation}

For a set of marked vertices $\MM_\bullet \subseteq \NN_\bullet^+$,
let $\TT_\circ := \refine(\TT_\bullet,\MM_\bullet)$ be the coarsest mesh such that
$\MM_\bullet \subseteq \NN_\circ$, i.e., all marked vertices are vertices of $\TT_\circ$.
Since NVB is a binary refinement rule, this implies that $\NN_\circ \subseteq \widehat\NN_\bullet$ and $(\NN_\circ \setminus \NN_\bullet) \setminus \partial D = \NN_\bullet^+ \cap \NN_\circ$.
In particular, the choices $\MM_\bullet = \emptyset$ and $\MM_\bullet = \NN_\bullet^+$ lead to the meshes
$\TT_\bullet = \refine(\TT_\bullet,\emptyset)$ and $\widehat\TT_\bullet = \refine(\TT_\bullet,\NN_\bullet^+)$, respectively. 

Turning now to the parametric enrichment,
we follow~\cite{bs16,br18,bprr18} and consider the \emph{detail index set}
\begin{equation} \label{def:Q}
   \QQQ_\bullet := \{ \mu \in \III \setminus \PPP_\bullet : \mu = \nu \pm \eps_m
                                 \text{ for all } \nu \in \PPP_\bullet \text{ and all } m = 1,\dots, M_{\PPP_\bullet} + 1
                             \},
\end{equation}
where $\eps_m \in \III$ denotes the $m$-th unit sequence,
i.e., $(\eps_m)_i = \delta_{mi}$ for all $i \in \N$, and $M_{\PPP_\bullet} \in \N$ is given by
\begin{equation*}
M_{\PPP_\bullet} :=
\begin{cases}
0                                                                             &  \text{if $\PPP_\bullet = \{ \0 \} $}, \\
\max \{ \max (\supp (\nu) ) : \nu \in \PPP_\bullet \setminus \{ \0 \} \} & \text{otherwise}.
\end{cases}
\end{equation*}
Then an enriched polynomial space $\P_\circ$ with $\P_\bullet \subset \P_\circ \subset \P$
can be obtained by adding some marked indices $\MMM_\bullet \subseteq \QQQ_\bullet$
to the current index set $\PPP_\bullet$, i.e.,
$\P_\circ := \hull\{P_\nu : \nu \in \PPP_\circ \}$ with $\PPP_\circ := \PPP_\bullet \cup \MMM_\bullet$.
We denote by $\widehat\P_\bullet \subset \P$ the polynomial space
obtained by adding to $\PPP_\bullet$ all indices of $\QQQ_\bullet$, i.e.,
$\widehat\P_\bullet := \hull\{P_\nu : \nu \in \widehat\PPP_\bullet \}$
with $\widehat\PPP_\bullet := \PPP_\bullet \cup \QQQ_\bullet$.

The analysis of the forthcoming adaptive algorithm will also rely on the enriched spaces
\begin{equation} \label{eq:enriched:spaces}
 \widehat\V_\bullet := (\widehat\X_\bullet \otimes \P_\bullet) + (\X_\bullet \otimes \widehat\P_\bullet)
 \quad \text{and} \quad
 \widehat\V_\bullet' := \X_\bullet \otimes \widehat\P_\bullet.
\end{equation}

\subsection{A~posteriori error estimation}

In order to estimate the error due to spatial discretization,
we employ the two-level error estimation strategy from~\cite{bprr18}.
Specifically, our spatial error estimate is given by
\begin{equation} \label{eq:spatial-error-estimate}
 \est_\bullet(\NN_\bullet^+)^2
 := \sum_{z \in \NN_\bullet^+} \est_\bullet(z)^2
 \quad \text{with} \quad
 \est_\bullet(z)^2 
 := \sum_{\nu \in \PPP_\bullet} \frac{|F(\widehat\varphi_{\bullet,z} P_\nu) - B(u_\bullet,\widehat\varphi_{\bullet,z} P_\nu)|^2}{\norm{a_0^{1/2}\nabla\widehat\varphi_{\bullet,z}}{L^2(D)}^2}.
\end{equation}

\begin{remark}
For $d=2$, we have $\#\NN_\bullet^+ = \#\EE_\bullet^\mathrm{int}$,
and the new degrees of freedom correspond to the midpoints of interior edges.
Then, the spatial error estimate can be indexed by $E \in \EE_\bullet^\mathrm{int}$
rather than by $z \in \NN_\bullet^+$; see~\cite{bprr18}.
Furthermore, in this case, one has $K = 3$ in~\eqref{eq:const:K}.
\end{remark}

In order to estimate the error due to polynomial approximation on the parameter domain $\G$,
we employ the hierarchical error estimator from~\cite{bps14,bs16}.
First, for each $\nu \in \QQQ_\bullet$, we define the estimator $e_\bullet^\nu \in \X_\bullet$ satisfying
\begin{equation}\label{eq:def:hat-e-ell-nu}
 B_0(e_\bullet^\nu P_\nu, v_\bullet P_\nu) 
 = F(v_\bullet P_\nu) - B(u_\bullet, v_\bullet P_\nu) 
 \quad \text{for all } v_\bullet \in \X_\bullet.
\end{equation}
Then, the parametric error estimate is defined as follows:
\begin{equation} \label{eq:parametric-error-estimate}
 \est_\bullet(\QQQ_\bullet)^2 
 := \sum_{\nu \in \QQQ_\bullet} \est_\bullet(\nu)^2 
 \quad \text{with} \quad
 \est_\bullet(\nu) := \norm{a_0^{1/2}\nabla e_\bullet^\nu}{L^2(D)}.
\end{equation}
From now on, for any $\MM_\bullet \subseteq \NN_\bullet^+$ and $\MMM_\bullet \subseteq \QQQ_\bullet$,
we use the following notation
\begin{equation*}
\est_\bullet(\MM_\bullet)^2
:= \sum_{z \in \MM_\bullet} {\est_\bullet}(z)^2,
\ \
\est_\bullet(\MMM_\bullet)^2
 := \sum_{\nu \in \MMM_\bullet} {\est_\bullet}(\nu)^2,
\ \
\est_\bullet(\MM_\bullet,\, \MMM_\bullet)^2
 := \est_\bullet(\MM_\bullet)^2 + \est_\bullet(\MMM_\bullet)^2.
\end{equation*}
We define the overall error estimate as follows:
\begin{equation} \label{eq:overall:err:estimate}
 \est_\bullet^2 := \est_\bullet(\NN_\bullet^+,\, \QQQ_\bullet)^2 = \est_\bullet(\NN_\bullet^+)^2 + \est_\bullet(\QQQ_\bullet)^2.
\end{equation}
Let us now consider the enriched space $\widehat\V_\bullet$ defined in~\eqref{eq:enriched:spaces}.
According to the Riesz theorem, there exists a unique $\widehat u_\bullet \in \widehat\V_\bullet$ such that
\begin{equation} \label{eq:discrete_formulation:hat}
 B(\widehat u_\bullet, \widehat v_\bullet) = F(\widehat v_\bullet) 
 \quad \text{for all } \widehat v_\bullet \in \widehat \V_\bullet. 
\end{equation}
Since $\V_\bullet \subset \widehat\V_\bullet$, the Galerkin orthogonality implies that
\begin{equation}\label{eq:pythagoras}
\enorm{u - \widehat u_\bullet}{}^2 + \enorm{\widehat u_\bullet - u_\bullet}{}^2 
= \enorm{u - u_\bullet}{}^2.
\end{equation}
In~\cite[Theorem~3.1]{bprr18}, we prove the following theorem for the overall error estimate~$\est_\bullet$.
The main result is the estimate~\eqref{eq1:thm:estimator},
while efficiency~\eqref{eq:efficiency} and reliability~\eqref{eq:reliability} then follow easily from~\eqref{eq:pythagoras}.

\begin{theorem}\label{thm:estimator}
There exists a constant $\Cthm \geq 1$, which depends only on the initial mesh $\TT_0$ and the mean field $a_0$, such that
\begin{equation}\label{eq1:thm:estimator}
 \frac{\lambda}{K} \, \est_\bullet^2
 \le \enorm{\widehat u_\bullet - u_\bullet}{}^2 
 \le \Lambda \Cthm \, \est_\bullet^2,
\end{equation}
where $\lambda$, $\Lambda$ are the constants in~\eqref{eq:lambda}
and $K$ is the constant in~\eqref{eq:const:K}.
In particular, there holds efficiency
\begin{equation}\label{eq:efficiency}
 \frac{\lambda}{K} \, \est_\bullet^2
 \le \enorm{\widehat u_\bullet - u_\bullet}{}^2 
 \stackrel{\eqref{eq:pythagoras}}{\leq}
 \enorm{u - u_\bullet}{}^2.
\end{equation}
Moreover, under the saturation assumption
\begin{equation}\label{eq:saturation}
 \enorm{u - \widehat u_\bullet}{} \le \qsat \, \enorm{u -  u_\bullet}{}
 \quad \text{with some constant } 0 < \qsat < 1,
\end{equation}
there holds reliability
\begin{equation}\label{eq:reliability}
 \enorm{u - u_\bullet}{}^2
 \stackrel{\eqref{eq:pythagoras}}{\leq} \frac{1}{1 - \qsat^2} \, \enorm{\widehat u_\bullet - u_\bullet}{}^2 
 \le \frac{\Lambda \Cthm}{1 - \qsat^2} \, \est_\bullet^2.
\end{equation}
\end{theorem}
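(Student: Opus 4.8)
The plan is to first prove the two-sided bound~\eqref{eq1:thm:estimator} (which is \cite[Theorem~3.1]{bprr18}) and then obtain~\eqref{eq:efficiency} and~\eqref{eq:reliability} as short consequences of the Pythagoras identity~\eqref{eq:pythagoras}; I would establish~\eqref{eq1:thm:estimator} in four steps. \emph{Step~1 (reduction to the mean field).} From~\eqref{eq:discrete_formulation:hat} one has $B(\widehat u_\bullet-u_\bullet,\widehat v_\bullet)=R_\bullet(\widehat v_\bullet)$ for all $\widehat v_\bullet\in\widehat\V_\bullet$, where $R_\bullet(\widehat v_\bullet):=F(\widehat v_\bullet)-B(u_\bullet,\widehat v_\bullet)$, and by~\eqref{eq:discrete_formulation} the functional $R_\bullet$ vanishes on $\V_\bullet$. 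Since $B(\cdot,\cdot)$ is symmetric and elliptic, $\widehat u_\bullet-u_\bullet$ is the $B$-Riesz representative of $R_\bullet$ on $\widehat\V_\bullet$, so $\enorm{\widehat u_\bullet-u_\bullet}{}$ is the dual norm of $R_\bullet$ over $\widehat\V_\bullet$ with respect to $\enorm{\cdot}{}$. The spectral equivalence~\eqref{eq:lambda} passes to dual norms, giving $\lambda\,\enorm{\widehat e_\bullet}{0}^2\le\enorm{\widehat u_\bullet-u_\bullet}{}^2\le\Lambda\,\enorm{\widehat e_\bullet}{0}^2$, where $\widehat e_\bullet\in\widehat\V_\bullet$ denotes the $B_0$-Riesz representative of $R_\bullet$, i.e.\ $B_0(\widehat e_\bullet,\widehat v_\bullet)=R_\bullet(\widehat v_\bullet)$ for all $\widehat v_\bullet\in\widehat\V_\bullet$. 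It therefore suffices to prove $\tfrac1K\,\est_\bullet^2\le\enorm{\widehat e_\bullet}{0}^2\le\Cthm\,\est_\bullet^2$.

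\emph{Step~2 (decoupling) and Step~3 (the parametric part).} The point of passing to $B_0(\cdot,\cdot)$ is that, by~\eqref{eq1:lemma:orthogonal}, it is block-diagonal in the stochastic modes $P_\nu$: since $\PPP_\bullet\cap\QQQ_\bullet=\emptyset$, the splitting $\widehat\V_\bullet=(\widehat\X_\bullet\otimes\P_\bullet)\oplus(\X_\bullet\otimes\hull\{P_\mu:\mu\in\QQQ_\bullet\})$ is $B_0$-orthogonal, the first summand involving only modes $P_\nu$ with $\nu\in\PPP_\bullet$ and the second only modes $P_\mu$ with $\mu\in\QQQ_\bullet$. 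Hence $\enorm{\widehat e_\bullet}{0}^2$ is the sum of the squared $B_0$-dual norms of $R_\bullet$ over these two subspaces, and, again by~\eqref{eq1:lemma:orthogonal}, each of them splits further into $B_0$-orthogonal blocks indexed by the single multi-indices $\nu\in\PPP_\bullet$, resp.\ $\mu\in\QQQ_\bullet$. In the block $\X_\bullet\otimes\hull\{P_\mu\}$ with $\mu\in\QQQ_\bullet$, the $B_0$-Riesz problem reads, by~\eqref{eq1:lemma:orthogonal} and $\norm{P_\mu}{L^2_\pi(\G)}=1$, exactly as the defining equation~\eqref{eq:def:hat-e-ell-nu} of $e_\bullet^\mu$; hence, by~\eqref{eq2:lemma:orthogonal}, the parametric contribution to $\enorm{\widehat e_\bullet}{0}^2$ equals \emph{exactly} $\sum_{\mu\in\QQQ_\bullet}\norm{a_0^{1/2}\nabla e_\bullet^\mu}{L^2(D)}^2=\est_\bullet(\QQQ_\bullet)^2$.

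\emph{Step~4 (the spatial part), the main obstacle.} In the block $\widehat\X_\bullet\otimes\hull\{P_\nu\}$ with $\nu\in\PPP_\bullet$, the $B_0$-Riesz representative has the form $w_\nu P_\nu$, where $w_\nu\in\widehat\X_\bullet$ is the $a_0$-energy Riesz representative of the functional $v\mapsto R_\bullet(v P_\nu)$; by~\eqref{eq:discrete_formulation} this functional vanishes on $\X_\bullet$, so $w_\nu$ is $a_0$-orthogonal to $\X_\bullet$. Writing $w_\nu=w_\nu^{(0)}+\sum_{z\in\NN_\bullet^+}c_z\,\widehat\varphi_{\bullet,z}$ with $w_\nu^{(0)}\in\X_\bullet$ and using $\norm{a_0^{1/2}\nabla\widehat\varphi_{\bullet,z}}{L^2(D)}^2=\enorm{\widehat\varphi_{\bullet,z}P_\nu}{0}^2$, one obtains $\norm{a_0^{1/2}\nabla w_\nu}{L^2(D)}^2=R_\bullet(w_\nu P_\nu)=\sum_{z\in\NN_\bullet^+}c_z\,R_\bullet(\widehat\varphi_{\bullet,z}P_\nu)$. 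The lower bound is easy: testing the block equation with a single $\widehat\varphi_{\bullet,z}P_\nu$, Cauchy--Schwarz, and the finite-overlap property~\eqref{eq:const:K} give $\sum_{\nu\in\PPP_\bullet}\norm{a_0^{1/2}\nabla w_\nu}{L^2(D)}^2\ge\tfrac1K\,\est_\bullet(\NN_\bullet^+)^2$. The upper bound $\sum_{\nu\in\PPP_\bullet}\norm{a_0^{1/2}\nabla w_\nu}{L^2(D)}^2\le\Cthm\,\est_\bullet(\NN_\bullet^+)^2$ is the crux of the theorem: Cauchy--Schwarz applied to $\sum_z c_z R_\bullet(\widehat\varphi_{\bullet,z}P_\nu)$ reduces it to the stable-splitting estimate $\sum_{z\in\NN_\bullet^+}c_z^2\,\norm{a_0^{1/2}\nabla\widehat\varphi_{\bullet,z}}{L^2(D)}^2\lesssim\norm{a_0^{1/2}\nabla w_\nu}{L^2(D)}^2$, which is precisely a classical two-level \textsl{a~posteriori} error estimate for the scalar second-order elliptic problem with diffusion coefficient $a_0$ on the hierarchically enriched finite element space $\widehat\X_\bullet\supset\X_\bullet$. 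I would derive it from (i)~a strengthened Cauchy--Schwarz inequality between $\X_\bullet$ and $\hull\{\widehat\varphi_{\bullet,z}:z\in\NN_\bullet^+\}$ in the $a_0$-energy inner product (with constant strictly below~$1$, controlling the loss incurred by projecting out $w_\nu^{(0)}$), together with (ii)~the local norm equivalence $\norm{a_0^{1/2}\nabla\sum_z c_z\widehat\varphi_{\bullet,z}}{L^2(D)}^2\gtrsim\sum_z c_z^2\,\norm{a_0^{1/2}\nabla\widehat\varphi_{\bullet,z}}{L^2(D)}^2$; both hold with constants depending only on $\TT_0$ and $a_0$ because the $\widehat\varphi_{\bullet,z}$ are nodal basis functions of a uniform newest-vertex-bisection refinement with uniformly bounded support overlap. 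I expect ingredient~(i) --- which underlies the reliability direction of the two-level estimator --- to be the technically hardest point; everything else is bookkeeping built on~\eqref{eq1:lemma:orthogonal}--\eqref{eq2:lemma:orthogonal} and~\eqref{eq:lambda}. Collecting the parametric contribution $\est_\bullet(\QQQ_\bullet)^2$ with the two spatial bounds and using $K\ge1$ and $\Cthm\ge1$ yields $\tfrac1K\,\est_\bullet^2\le\enorm{\widehat e_\bullet}{0}^2\le\Cthm\,\est_\bullet^2$, hence~\eqref{eq1:thm:estimator} by Step~1.

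\emph{Step~5 (efficiency and reliability).} Inequality~\eqref{eq:efficiency} is the lower bound in~\eqref{eq1:thm:estimator} combined with~\eqref{eq:pythagoras}. For~\eqref{eq:reliability}, combining~\eqref{eq:pythagoras} with the saturation assumption~\eqref{eq:saturation} gives $\enorm{u-u_\bullet}{}^2=\enorm{u-\widehat u_\bullet}{}^2+\enorm{\widehat u_\bullet-u_\bullet}{}^2\le\qsat^2\,\enorm{u-u_\bullet}{}^2+\enorm{\widehat u_\bullet-u_\bullet}{}^2$, hence $(1-\qsat^2)\,\enorm{u-u_\bullet}{}^2\le\enorm{\widehat u_\bullet-u_\bullet}{}^2$, and the upper bound in~\eqref{eq1:thm:estimator} then yields~\eqref{eq:reliability}.
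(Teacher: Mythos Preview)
Your proposal is correct and aligns with the paper's own treatment: the paper does not reprove~\eqref{eq1:thm:estimator} but cites \cite[Theorem~3.1]{bprr18} and remarks that the argument ``essentially relies on the stable subspace decompositions'' $\widehat\X_\bullet = \X_\bullet \oplus \bigoplus_{z\in\NN_\bullet^+}\hull\{\widehat\varphi_{\bullet,z}\}$ and $\widehat\P_\bullet = \P_\bullet \oplus \bigoplus_{\nu\in\QQQ_\bullet}\hull\{P_\nu\}$, while~\eqref{eq:efficiency} and~\eqref{eq:reliability} are derived exactly as in your Step~5 from~\eqref{eq:pythagoras}. Your Steps~1--4 constitute a faithful expansion of that sketch: the passage to $B_0(\cdot,\cdot)$ via~\eqref{eq:lambda}, the $B_0$-orthogonal decoupling into parametric and spatial blocks, the exact identification of the parametric contribution with $\est_\bullet(\QQQ_\bullet)^2$, and the spatial two-level bounds via finite overlap~\eqref{eq:const:K} and a strengthened Cauchy--Schwarz inequality are precisely the ingredients of the proof in~\cite{bprr18}.
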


The proof of Theorem~\ref{thm:estimator} given in~\cite{bprr18} essentially relies on the stable subspace
decompositions
\begin{equation*}
 \widehat\X_\bullet = \X_\bullet \oplus \bigoplus_{z \in \NN_\bullet^+} \hull{\{\widehat\varphi_{\bullet,z}\}}
 \quad\hbox{and}\quad
 \widehat\P_\bullet = \P_\bullet \oplus \bigoplus_{\nu \in \QQQ_\bullet} \hull{\{P_\nu\}}.
\end{equation*}
For $d = 2$, the analysis in~\cite{bprr18}, in fact, proves a more general result
than estimate~\eqref{eq1:thm:estimator}.
Let $\TT_\circ = \refine(\TT_\bullet,\MM_\bullet)$ and consider
$z \in (\NN_{\circ} \setminus \NN_\bullet) \setminus \partial D = \NN_\bullet^+ \cap \NN_\circ \subseteq \NN_\bullet^+$.
Let $\varphi_{\circ,z} \in \X_\circ$ and $\widehat\varphi_{\bullet,z} \in \widehat\X_\bullet$ be the corresponding hat functions.
Then, 2D NVB refinement ensures that $\varphi_{\circ,z} = \widehat\varphi_{\bullet,z}$, which yields the stable decomposition
\begin{equation*}
 \X_{\circ} 
 = \X_\bullet \oplus \bigoplus_{z \in \NN_\bullet^+ \cap \NN_\circ} \hull{\{\varphi_{\circ,z}\}}
 = \X_\bullet \oplus \bigoplus_{z \in \NN_\bullet^+ \cap \NN_\circ} \hull{\{\widehat\varphi_{\bullet,z}\}}.
\end{equation*}
As a consequence, the analysis from~\cite{bprr18} also proves the following result
that allows to control the error reduction due to \emph{adaptive} enrichment of
both components of the approximation space $\V_\bullet = \X_\bullet \otimes \P_\bullet$.

\begin{corollary}\label{cor:estimator}
Let $d = 2$.
Let $\Cthm \geq 1$ be the constant from Theorem~\ref{thm:estimator}.
Suppose that $\TT_\circ = \refine(\TT_\bullet,\MM_\bullet)$
and $\widehat\TT_\bullet = \refine(\TT_\bullet,\NN_\bullet^+)$ are obtained by 2D NVB refinement
and $\PPP_\circ = \PPP_\bullet \cup \MMM_\bullet$ for an index set $\MMM_\bullet \subseteq \QQQ_\bullet$.
If $u_\bullet \in \V_\bullet$ and $u_\circ \in \V_\circ$ are two Galerkin approximations, then there holds
\begin{equation}\label{eq1:cor:estimator}
 \frac{\lambda}{K} \, \est_\bullet\big( \NN_\bullet^+ \cap \NN_\circ,\, \MMM_\bullet \big)^2 \le
 \enorm{u_\circ - u_\bullet}{}^2 \le
 \Lambda \Cthm \, \est_\bullet\big( \NN_\bullet^+ \cap \NN_\circ,\, \MMM_\bullet \big)^2.
\end{equation}
\end{corollary}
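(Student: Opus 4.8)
The plan is to re-derive~\eqref{eq1:cor:estimator} by re-running the proof of Theorem~\ref{thm:estimator} from~\cite{bprr18}, with the auxiliary space $\widehat\V_\bullet = (\widehat\X_\bullet \otimes \P_\bullet) + (\X_\bullet \otimes \widehat\P_\bullet)$ replaced by the space $\V_\circ$ hosting $u_\circ$, and with the index sets $\NN_\bullet^+$ and $\QQQ_\bullet$ replaced, respectively, by $\NN_\bullet^+ \cap \NN_\circ$ and $\MMM_\bullet$. Both inequalities in~\eqref{eq1:cor:estimator} will follow once one exhibits, with the \emph{same} overlap and stability constants as in~\cite{bprr18}, the subspace decomposition
\[
 \V_\circ
 = \V_\bullet
 \;\oplus\; \Big(\bigoplus_{z\in\NN_\bullet^+\cap\NN_\circ} \hull{\{\widehat\varphi_{\bullet,z}\}}\Big)\otimes\P_\bullet
 \;\oplus\; \X_\bullet\otimes\Big(\bigoplus_{\nu\in\MMM_\bullet} \hull{\{P_\nu\}}\Big),
\]
and observes that, since $\NN_\bullet^+\cap\NN_\circ \subseteq \NN_\bullet^+$ and $\MMM_\bullet \subseteq \QQQ_\bullet$, the localized indicators $\est_\bullet(z)$ (for $z\in\NN_\bullet^+\cap\NN_\circ$) and $\est_\bullet(\nu)$ (for $\nu\in\MMM_\bullet$) are literally the quantities appearing in~\eqref{eq:spatial-error-estimate}--\eqref{eq:parametric-error-estimate}, so that $\est_\bullet(\NN_\bullet^+\cap\NN_\circ,\MMM_\bullet)$ is a genuine sub-sum of $\est_\bullet$.

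The first task would be to verify this decomposition. Its parametric part is immediate: from $\MMM_\bullet \subseteq \QQQ_\bullet \subseteq \III\setminus\PPP_\bullet$ and the fact that $\{P_\nu : \nu\in\III\}$ is an orthonormal basis of $\P$ one obtains $\P_\circ = \P_\bullet \oplus \bigoplus_{\nu\in\MMM_\bullet}\hull{\{P_\nu\}}$, which is a sub-decomposition of the one of $\widehat\P_\bullet$. Its spatial part is exactly the decomposition $\X_\circ = \X_\bullet \oplus \bigoplus_{z\in\NN_\bullet^+\cap\NN_\circ}\hull{\{\widehat\varphi_{\bullet,z}\}}$ established just before the statement of the corollary, which rests on the 2D NVB identity $\varphi_{\circ,z} = \widehat\varphi_{\bullet,z}$ for every $z\in\NN_\bullet^+\cap\NN_\circ = (\NN_\circ\setminus\NN_\bullet)\setminus\partial D$. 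Because this uses only a subset of the hat functions $\widehat\varphi_{\bullet,z}$, $z\in\NN_\bullet^+$, the finite-overlap bound~\eqref{eq:const:K} continues to hold with the same constant $K$, and the local $H^1$-stability estimates underlying~\cite[Theorem~3.1]{bprr18} (which produce the constant $\Cthm$, depending only on $\TT_0$ and $a_0$) carry over unchanged. Hence every structural ingredient of the proof of Theorem~\ref{thm:estimator} is available in the present, adaptively refined setting.

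With the decomposition and the constants fixed, the two estimates follow as in~\cite{bprr18} from the residual identity $B(u_\circ - u_\bullet, v_\circ) = F(v_\circ) - B(u_\bullet, v_\circ)$, valid for all $v_\circ\in\V_\circ$. For the lower bound in~\eqref{eq1:cor:estimator}, one tests this identity with $v_\circ = \widehat\varphi_{\bullet,z}P_\nu$ (for $z\in\NN_\bullet^+\cap\NN_\circ$ and $\nu\in\PPP_\bullet$) and with $v_\circ = e_\bullet^\nu P_\nu$ (for $\nu\in\MMM_\bullet$)---all of which belong to $\V_\circ$---and then combines the $B_0$-orthogonality~\eqref{eq1:lemma:orthogonal}, the norm equivalence~\eqref{eq:lambda}, and the finite overlap~\eqref{eq:const:K} by means of Cauchy--Schwarz in the energy norm; this gives $\frac{\lambda}{K}\,\est_\bullet(\NN_\bullet^+\cap\NN_\circ,\MMM_\bullet)^2 \le \enorm{u_\circ - u_\bullet}{}^2$. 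For the upper bound, one expands $u_\circ - u_\bullet\in\V_\circ$ along the decomposition above, removes its $\V_\bullet$-component by the Galerkin orthogonality $B(u_\circ - u_\bullet, v_\bullet) = 0$ for all $v_\bullet\in\V_\bullet$, bounds the spatial and parametric increments by the corresponding $\est_\bullet(z)$ and $\est_\bullet(\nu)$ through the stability of the decomposition, and applies~\eqref{eq:lambda} once more; this yields $\enorm{u_\circ - u_\bullet}{}^2 \le \Lambda\Cthm\,\est_\bullet(\NN_\bullet^+\cap\NN_\circ,\MMM_\bullet)^2$.

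The only point where the restriction $d = 2$ together with NVB is genuinely needed---and the step I expect to require the most care---is the identity $\varphi_{\circ,z} = \widehat\varphi_{\bullet,z}$ for the new vertices $z$ common to $\TT_\circ$ and $\widehat\TT_\bullet$. It is precisely this identity that turns the $\X_\circ$-decomposition into a \emph{sub}-decomposition of the $\widehat\X_\bullet$-decomposition, so that $K$ and $\Cthm$ are left unchanged, and that makes the indicators $\est_\bullet(z)$---which are defined through the uniform-refinement hat functions $\widehat\varphi_{\bullet,z}$---genuinely control the local energy contribution of the adaptively refined Galerkin solution $u_\circ$. In $d = 3$ the uniform refinement introduces further vertices (e.g.\ in the interior of faces) and one has in general $\varphi_{\circ,z} \ne \widehat\varphi_{\bullet,z}$, which is why the corollary---in contrast to Theorem~\ref{thm:estimator}---is stated for $d = 2$ only. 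Everything else is the bookkeeping already carried out in~\cite{bprr18}.
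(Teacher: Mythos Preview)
Your proposal is correct and follows precisely the paper's approach: the paper's entire justification for the corollary is the observation that 2D NVB ensures $\varphi_{\circ,z} = \widehat\varphi_{\bullet,z}$ for every $z \in \NN_\bullet^+ \cap \NN_\circ$, whence $\X_\circ = \X_\bullet \oplus \bigoplus_{z \in \NN_\bullet^+ \cap \NN_\circ} \hull\{\widehat\varphi_{\bullet,z}\}$ is a sub-decomposition of the one for $\widehat\X_\bullet$ with the same overlap and stability constants, after which the analysis of~\cite{bprr18} is invoked verbatim. You have reproduced this argument and fleshed it out in more detail than the paper itself provides, including the correct identification of why the restriction $d=2$ is essential.
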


\section{Main results} \label{sec:main:results}

\subsection{Adaptive algorithms} \label{subsec:algorithms}
Let $\TT_0$ be the initial mesh and let the initial index set $\PPP_0$ contain only the zero index, i.e., $\PPP_0 := \{ \0 \}$.
The adaptive algorithm below generates a sequence $(\TT_\ell)_{\ell \in \N_0}$ of adaptively refined meshes and
a sequence $(\PPP_\ell)_{\ell \in \N_0}$ of adaptively enriched index sets such that, for all $\ell \in \N_0$, there holds
\begin{equation*}
   \TT_{\ell + 1} = \refine(\TT_\ell, \MM_\ell)\ \hbox{for some $\MM_\ell \subseteq \NN_\ell^+$}\ \ \hbox{and}\ \
   \PPP_\ell \subseteq \PPP_{\ell+1} \subseteq \widehat\PPP_\ell = \PPP_\ell \cup \QQQ_\ell.
\end{equation*}
In particular, by the definition of the detail index set~\eqref{def:Q}, one has
$\QQQ_\ell \setminus \PPP_{\ell+1} \subseteq \QQQ_{\ell+1}$ and
$\widehat\PPP_\ell \subseteq \widehat\PPP_{\ell+1}$.
Thus, the following inclusions hold
\begin{equation*}
   \X_\ell \subseteq \X_{\ell + 1} \subseteq \widehat \X_\ell \subset \X
   \quad\hbox{and}\quad
   \P_\ell \subseteq \P_{\ell + 1} \subseteq \widehat \P_\ell \subseteq \widehat \P_{\ell+1} \subset \P.
\end{equation*}
Furthermore, since the adaptive algorithm presented below performs either mesh refinement or parametric enrichment
at each iteration $\ell \in \N_0$, one of the inclusions $\X_\ell \subseteq \X_{\ell+1}$ or $\P_\ell \subseteq \P_{\ell+1}$ is strict.
Therefore, recalling the definition of the enriched spaces $\widehat\V_\ell$ and $\widehat\V_\ell'$ (see~\eqref{eq:enriched:spaces}),
we conclude that
\begin{equation*}
 \V_\ell \subset \widehat\V_\ell' \subset \widehat\V_\ell \subset \V, 
 \quad
 \V_\ell \subset \V_{\ell+1},
 \quad
 \widehat\V_\ell' \subset \widehat\V_{\ell+1}',
 \quad
 \text{and } 
 \widehat\V_\ell \subset \widehat\V_{\ell+1}
 \quad \text{for all } \ell \in \N_0.
\end{equation*}
We consider the following basic loop of an adaptive algorithm,
where the precise marking strategy is still left open, but will be specified subsequently.

\begin{algorithm}\label{algorithm}
{\bfseries Input:} $\TT_0$, $\PPP_0 = \{ \0 \}$, marking criterion.
Set $\ell=0$.
\begin{itemize}
\item[\rm(i)] Compute discrete solution $u_\ell \in \V_\ell$.
\item[\rm(ii)] Compute error indicators $\est_\ell(z)$ and $\est_\ell(\nu)$ for all $z \in \NN_\ell^+$ 
and all $\nu \in \QQQ_\ell$.
\item[\rm(iii)] Use marking criterion to obtain $\MM_\ell \subseteq \NN_\ell^+$ and $\MMM_\ell \subseteq \QQQ_\ell$.
\item[\rm(iv)] Set $\PPP_{\ell+1} = \PPP_\ell \cup \MMM_\ell$ and $\TT_{\ell+1} = \refine(\TT_\ell, \MM_\ell)$.
\item[\rm(v)] Increase the counter $\ell \mapsto \ell + 1$ and continue with {\rm(i)}.
\end{itemize}
{\bfseries Output:}
$(\TT_\ell,\PPP_\ell,u_\ell,\est_\ell)_{\ell \in \N_0}$.
\end{algorithm}

The criteria below specify four different marking strategies
for Step~(iii) of Algorithm~\ref{algorithm}
and, at the same time, determine the type of enrichment for the next iteration of the algorithm.
Each strategy comes with three parameters:
$\vartheta>0$ is a weight modulating the choice between mesh refinement and
parametric enrichment (with parametric enrichment being favored for $\vartheta>1$),
$0 < \thetaX \leq 1$ controls the marking of nodes in $\NN_\ell^+$
(always based on the D\"orfler criterion),
whereas $0 < \thetaP \leq 1$ controls the marking of indices in $\QQQ_\ell$
(based on either the D\"orfler criterion
or the maximum criterion).

The first criterion enforces spatial refinement if the spatial error estimate is comparably large;
otherwise, parametric enrichment is chosen for the next iteration.
The marked facets (resp.,\ marked indices) are obtained via D\"orfler marking.

\begin{marking}[{\cite{egsz14, br18}}]\label{marking:A}
{\bfseries Input:}
error indicators $\{\est_\ell(z) : z \in \NN_\ell^+\}$, $\{\est_\ell(\nu) : \nu \in \QQQ_\ell\}$;
marking parameters $0 < \thetaX, \thetaP \le 1$ and $\vartheta > 0$.\\
Case~{\rm (a)}: $\vartheta \, \est_\ell(\QQQ_\ell) \le \est_\ell(\NN_\ell^+)$.
\begin{itemize}
\item[$-$] Set $\MMM_\ell = \emptyset$;
\item[$-$] Find $\MM_\ell \subseteq \NN_\ell^+$ with minimal cardinality such that
$\thetaX \, \est_\ell(\NN_\ell^+) \le \est_\ell(\MM_\ell)$.
\end{itemize}
Case~{\rm (b)}: $\vartheta \, \est_\ell(\QQQ_\ell) > \est_\ell(\NN_\ell^+)$.
\begin{itemize}
\item[$-$] Find $\MMM_\ell \subseteq \QQQ_\ell$ with minimal cardinality such that
$\thetaP \, \est_\ell(\QQQ_\ell) \le \est_\ell(\MMM_\ell)$;
\item[$-$] Set $\MM_\ell = \emptyset$.
\end{itemize}
{\bfseries Output:} $\MM_\ell \subseteq \NN_\ell^+$ and $\MMM_\ell \subseteq \QQQ_\ell$, where one of the subsets is empty.
\end{marking}

Criterion~\ref{marking:B} is based on the idea that the error estimate $\est_\ell$ on the refined elements
(resp., added indices) provides information about the associated error reduction (see Corollary~\ref{cor:estimator}).
This criterion enforces either spatial refinement (if the error reduction for spatial mesh refinement is comparably large)
or parametric enrichment (otherwise).

\begin{marking}[{\cite{br18}}]\label{marking:B}
{\bfseries Input:}
error indicators $\{\est_\ell(z) : z \in \NN_\ell^+\}$, $\{\est_\ell(\nu) : \nu \in \QQQ_\ell\}$;
marking parameters $0 < \thetaX, \thetaP \le 1$ and $\vartheta > 0$.
\begin{itemize}
\item[$-$] Find $\widetilde\MMM_\ell \subseteq \QQQ_\ell$ with minimal cardinality such that
$\thetaP \, \est_\ell(\QQQ_\ell) \le \est_\ell(\widetilde\MMM_\ell)$.
\item[$-$] Find $\widetilde\MM_\ell \subseteq \NN_\ell^+$ with minimal cardinality such that
$\thetaX \, \est_\ell(\NN_\ell^+) \le \est_\ell(\widetilde\MM_\ell)$.
\item[$-$] Define $\widetilde\RR_\ell := \NN_\ell^+ \cap \widetilde\NN_\ell$, where
$\widetilde\NN_\ell$ is associated with 
$\widetilde\TT_\ell = \refine(\TT_\ell,\widetilde\MM_\ell)$.
\end{itemize}
Case~{\rm (a)}: $\vartheta \, \est_\ell(\widetilde\MMM_\ell) \le \est_\ell(\widetilde\RR_\ell)$.
Set $\MMM_\ell = \emptyset$ and $\MM_\ell = \widetilde\MM_\ell$.\\
Case~{\rm (b)}: $\vartheta \, \est_\ell(\widetilde\MMM_\ell) > \est_\ell(\widetilde\RR_\ell)$.
Set $\MMM_\ell = \widetilde\MMM_\ell$ and $\MM_\ell = \emptyset$.\\
{\bfseries Output:} $\MM_\ell \subseteq \NN_\ell^+$ and $\MMM_\ell \subseteq \QQQ_\ell$, where one of the subsets is empty.
\end{marking}

Criterion~\ref{marking:C} is a modification of Criterion~\ref{marking:A}.
It employs a maximum criterion in the parameter domain, while using D\"orfler marking in the physical domain.
As in Criterion~\ref{marking:A}, the enrichment type is determined by the dominant contributing error estimate.

\begin{marking}\label{marking:C}
{\bfseries Input:}
error indicators $\{\est_\ell(z) : z \in \NN_\ell^+\}$, $\{\est_\ell(\nu) : \nu \in \QQQ_\ell\}$;
marking parameters $0 < \thetaX \le 1$, $0 \le \thetaP \le 1$ and $\vartheta > 0$.\\
Case~{\rm (a)}: $\vartheta \, \est_\ell(\QQQ_\ell) \le \est_\ell(\NN_\ell^+)$.
\begin{itemize}
\item[$-$] Set $\MMM_\ell = \emptyset$;
\item[$-$] Find $\MM_\ell \subseteq \NN_\ell^+$ with minimal cardinality such that
$\thetaX \, \est_\ell(\NN_\ell^+) \le \est_\ell(\MM_\ell)$.
\end{itemize}
Case~{\rm (b)}: $\vartheta \, \est_\ell(\QQQ_\ell) > \est_\ell(\NN_\ell^+)$.
\begin{itemize}
\item[$-$] Define
$\MMM_\ell := \{\mu \in \QQQ_\ell :
                           \est_\ell(\mu) \ge (1-\thetaP) \, \max_{\nu \in \QQQ_\ell} \est_\ell(\nu)
                        \}$;
\item[$-$] Set $\MM_\ell = \emptyset$.
\end{itemize}
{\bfseries Output:} $\MM_\ell \subseteq \NN_\ell^+$ and $\MMM_\ell \subseteq \QQQ_\ell$, where one of the subsets is empty.
\end{marking}

Finally, Criterion~\ref{marking:D} is a modification of Criterion~\ref{marking:B} 
in the same way as Criterion~\ref{marking:C} is a modification of Criterion~\ref{marking:A}.
Namely, we employ D\"orfler marking in the physical domain and
use a maximum criterion in the parameter domain, while
the refinement type for the next iteration is determined by the dominant error reduction.

\begin{marking}\label{marking:D}
{\bfseries Input:}
error indicators $\{\est_\ell(z) : z \in \NN_\ell^+\}$, $\{\est_\ell(\nu) : \nu \in \QQQ_\ell\}$;
marking parameters $0 < \thetaX \le 1$, $0 \le \thetaP \le 1$ and $\vartheta > 0$.
\begin{itemize}
\item[$-$] Define
$\widetilde\MMM_\ell := \{ \mu \in \QQQ_\ell :
                                           \est_\ell(\mu) \ge (1-\thetaP) \, \max_{\nu \in \QQQ_\ell} \est_\ell(\nu)
                                        \}$.
\item[$-$] Find $\widetilde\MM_\ell \subseteq \NN_\ell^+$ with minimal cardinality such that
$\thetaX \, \est_\ell(\NN_\ell^+) \le \est_\ell(\widetilde\MM_\ell)$.
\item[$-$] Define $\widetilde\RR_\ell := \NN_\ell^+ \cap \widetilde\NN_\ell$, where
$\widetilde\NN_\ell$ is associated with 
$\widetilde\TT_\ell = \refine(\TT_\ell,\widetilde\MM_\ell)$.
\end{itemize}
Case~{\rm (a)}: $\vartheta \, \est_\ell(\widetilde\MMM_\ell) \le \est_\ell(\widetilde\RR_\ell)$.
Set $\MMM_\ell = \emptyset$ and $\MM_\ell = \widetilde\MM_\ell$.\\
Case~{\rm (b)}: $\vartheta \, \est_\ell(\widetilde\MMM_\ell) \,{>}\, \est_\ell(\widetilde\RR_\ell)$.
Set\, $\MMM_\ell = \widetilde\MMM_\ell$ and~$\MM_\ell = \emptyset$.\\
{\bfseries Output:} $\MM_\ell \subseteq \NN_\ell^+$ and $\MMM_\ell \subseteq \QQQ_\ell$, where one of the subsets is empty.
\end{marking}

In what follows we will write, e.g., Algorithm~\ref{algorithm}.\ref{marking:A} to refer to the algorithm obtained
by employing Criterion~\ref{marking:A} in Step~(iii) of Algorithm~\ref{algorithm}.
When we refer to Algorithm~\ref{algorithm} without specifying the marking criterion,
this will mean that the statement holds for any of the four proposed marking strategies.

\subsection{Convergence results}

The following theorem is the first main result of the present work.
It shows that Algorithm~\ref{algorithm} ensures convergence of the
underlying error estimates to zero.
We emphasize that it is valid independently of the saturation assumption~\eqref{eq:saturation}. 

\begin{theorem}\label{thm:plain_convergence}
For any choice of the marking parameters $\thetaX,\, \thetaP$ and $\vartheta$,
Algorithm~\ref{algorithm} yields a convergent sequence of error estimates, i.e., $\est_\ell \to 0$ as $\ell \to \infty$.
\end{theorem}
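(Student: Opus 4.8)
The plan is to follow the by-now classical "estimator convergence" strategy for adaptive algorithms that do not assume a discrete reliability estimate, in the spirit of \cite{msv08} (and its use in \cite{br18,bprr18}). The starting point is the nestedness of the discrete spaces: we have $\V_\ell \subset \V_{\ell+1} \subset \V$ for all $\ell$, and all $\V_\ell$ are closed subspaces of the Hilbert space $\V$ equipped with the energy inner product $B(\cdot,\cdot)$. Since each $u_\ell$ is the $B$-orthogonal projection of $u$ onto $\V_\ell$ and the spaces are nested, the sequence $(u_\ell)_{\ell \in \N_0}$ is $B$-Cauchy: indeed, $\enorm{u-u_\ell}{}$ is monotonically non-increasing and bounded below, hence convergent, and Galerkin orthogonality gives $\enorm{u_{\ell+k}-u_\ell}{}^2 = \enorm{u-u_\ell}{}^2 - \enorm{u-u_{\ell+k}}{}^2 \to 0$. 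Therefore $u_\ell \to u_\infty$ in $\V$ for some limit $u_\infty \in \overline{\bigcup_\ell \V_\ell}$, and in particular $\enorm{u_{\ell+1}-u_\ell}{} \to 0$ as $\ell \to \infty$. This is the only "soft" ingredient and it is completely standard.

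The second step is to convert this energy-difference decay into decay of the marked portion of the estimator. For each $\ell$, the algorithm performs either pure mesh refinement or pure parametric enrichment. Using Corollary~\ref{cor:estimator} (for $d=2$; the analogous lower bound in Theorem~\ref{thm:estimator} handling the general case via $\widehat u_\ell$), we obtain in the mesh-refinement case
\begin{equation*}
 \frac{\lambda}{K}\,\est_\ell(\NN_\ell^+ \cap \NN_{\ell+1})^2 \le \enorm{u_{\ell+1}-u_\ell}{}^2,
\end{equation*}
and in the parametric-enrichment case
\begin{equation*}
 \frac{\lambda}{K}\,\est_\ell(\MMM_\ell)^2 \le \enorm{u_{\ell+1}-u_\ell}{}^2.
\end{equation*}
Combining both cases and recalling that $\MM_\ell \subseteq \NN_\ell^+ \cap \NN_{\ell+1}$, we conclude $\est_\ell(\MM_\ell,\MMM_\ell) \to 0$. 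The crucial point now is that all four marking criteria are (generalized) D\"orfler/maximum strategies: in every case the marked set satisfies an estimate of the form $\est_\ell(\MM_\ell,\MMM_\ell) \ge c(\thetaX,\thetaP,\vartheta)\,\est_\ell(\NN_\ell^+,\QQQ_\ell) = c\,\est_\ell$ — for Case~(a) of Criteria~\ref{marking:A},\ref{marking:C} the D\"orfler bulk chasing on $\NN_\ell^+$ together with $\vartheta\,\est_\ell(\QQQ_\ell)\le\est_\ell(\NN_\ell^+)$ gives this; for Case~(b) the maximum/D\"orfler marking on $\QQQ_\ell$ together with $\est_\ell(\NN_\ell^+)<\vartheta\,\est_\ell(\QQQ_\ell)$ gives it; for Criteria~\ref{marking:B},\ref{marking:D} one argues similarly using $\widetilde\RR_\ell$ and the fact that $\est_\ell(\widetilde\RR_\ell)\ge\thetaX\,\est_\ell(\NN_\ell^+)$ by the stable decomposition behind Corollary~\ref{cor:estimator}. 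For the maximum-criterion cases one additionally needs that $\#\QQQ_\ell$ is controlled so that the single largest indicator bounds a fixed fraction of $\est_\ell(\QQQ_\ell)$; here $\#\QQQ_\ell$ need not be bounded, but one uses that $\est_\ell(\MMM_\ell)$ collects \emph{all} indices within a factor $(1-\thetaP)$ of the maximum, which already dominates $\thetaP^{?}$-independently a fixed share when combined with the comparison $\est_\ell(\NN_\ell^+) < \vartheta\,\est_\ell(\QQQ_\ell)$ — this is where Remark~\ref{remark:main_thm} about a cleaner formulation à la \cite[\S2.2]{msv08} enters, and I would phrase the argument once for the abstract marking class and then check each criterion fits it.

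Putting the pieces together: $\est_\ell \le C\,\est_\ell(\MM_\ell,\MMM_\ell) \to 0$, which is the claim. The main obstacle, and the only place requiring genuine care, is the maximum-marking case in the parameter domain (Criteria~\ref{marking:C},\ref{marking:D}): because $\#\QQQ_\ell$ is not uniformly bounded, the naive bound "largest indicator $\ge \#\QQQ_\ell^{-1/2}\,\est_\ell(\QQQ_\ell)$" is useless, so one must argue that the \emph{set} of near-maximal indices carries a uniform fraction of the total parametric estimate, or—more robustly—abandon the attempt to bound $\est_\ell(\QQQ_\ell)$ directly and instead show $\est_\ell(\QQQ_\ell)\to 0$ via a separate limiting argument (e.g. along the subsequence where Case~(b) is invoked, the added indices' estimators vanish, and one exploits the structure of $\QQQ_\ell$ together with $u_\ell\to u_\infty$ to propagate this to the full detail set). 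I would organize Section~\ref{section:plain_convergence} around exactly this dichotomy, isolating the $B$-Cauchy argument first, then the estimator-reduction lemma, then a per-criterion verification of the "marked set dominates a fixed fraction" property.
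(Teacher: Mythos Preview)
Your strategy---use $\enorm{u_{\ell+1}-u_\ell}{}\to 0$, convert via Corollary~\ref{cor:estimator} to $\est_\ell(\MM_\ell,\MMM_\ell)\to 0$, and then invoke the D\"orfler/case structure to conclude $\est_\ell\to 0$---is correct and complete for Criteria~\ref{marking:A} and~\ref{marking:B} in dimension $d=2$, and is in fact simpler than what the paper does for those cases. The paper instead proves two general propositions (one for the parametric side via a technical $\ell^2$-sequence lemma, one for the spatial side via the $\TT_\ell^{\rm good}/\TT_\ell^{\rm bad}/\TT_\ell^{\rm neither}$ decomposition of~\cite{msv08} together with local efficiency), and then splits into cases depending on whether case~(a) or~(b) occurs finitely or infinitely often. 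Your route avoids both of these and is genuinely more elementary---but only because Corollary~\ref{cor:estimator} is available.

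There are two genuine gaps. First, Corollary~\ref{cor:estimator} is stated and proved only for $d=2$; your parenthetical ``the analogous lower bound in Theorem~\ref{thm:estimator} handling the general case via $\widehat u_\ell$'' does not work, because Theorem~\ref{thm:estimator} compares $u_\ell$ with the \emph{fully} enriched $\widehat u_\ell$, not with the adaptively refined $u_{\ell+1}$, so it gives no control on $\est_\ell(\MM_\ell)$. The paper's spatial argument via Lemma~\ref{lem:prop:spatial} and the \cite{msv08} decomposition is exactly what fills this hole for $d=3$. Second, for the maximum criterion (Criteria~\ref{marking:C},~\ref{marking:D}) you correctly diagnose that no inequality of the form $\est_\ell(\MMM_\ell)\ge c\,\est_\ell(\QQQ_\ell)$ is available (since $\#\QQQ_\ell$ is unbounded), and your fallback sketch---``exploit $u_\ell\to u_\infty$ to propagate this to the full detail set''---points in the right direction but is not yet a proof. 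The paper's execution (Proposition~\ref{prop:conv:parametric:new}) is that the parametric indicators $\est_\ell(\nu)$ converge in $\ell^2(\III)$ to a limiting sequence $\est_\infty(\nu)$; since the maximum criterion ensures $\max_\nu\est_\ell(\nu)\le\est_\ell(\widetilde\MMM_\ell)\to 0$, every $\est_\infty(\nu)$ must vanish, whence $\est_\ell(\QQQ_\ell)\to 0$. Establishing that $\ell^2$-convergence requires an auxiliary Galerkin problem on $\widehat\V_\ell'$ and is the step your proposal is missing.
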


The proof of Theorem~\ref{thm:plain_convergence} is postponed to Section~\ref{section:plain_convergence}.

\begin{remark} \label{remark:main_thm}
The proof of Theorem~\ref{thm:plain_convergence} allows for more general marking strategies
than those proposed in Section~\ref{subsec:algorithms} above
(see Propositions~\ref{prop:conv:parametric}--\ref{prop:conv:spatial} in Section~\ref{section:plain_convergence}).
However, we believe that the marking strategies proposed in Criteria~\ref{marking:A}--\ref{marking:D}
are natural candidates for the present setting.
\end{remark}

The following result is an immediate consequence of Theorem~\ref{thm:plain_convergence} and
the reliability~\eqref{eq:reliability} from Theorem~\ref{thm:estimator}.

\begin{corollary}\label{cor:plain_convergence}
Let $(u_\ell)_{\ell \in \N_0}$ be the sequence of Galerkin solutions generated by Algorithm~\ref{algorithm}.
Denote by $(\widehat u_\ell)_{\ell \in \N_0}$ the associated sequence of Galerkin solutions
satisfying~\eqref{eq:discrete_formulation:hat} and suppose that the saturation assumption~\eqref{eq:saturation}
holds for each pair $u_\ell,\, \widehat u_\ell$ ($\ell \in \N_0$).
Then, for any choice of marking parameters $\thetaX,\, \thetaP$ and $\vartheta$,
Algorithm~\ref{algorithm} yields convergence, i.e., $\enorm{u-u_\ell}{} \to 0$ as $\ell \to \infty$.
\end{corollary}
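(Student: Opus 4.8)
The plan is simply to chain together the reliability bound of Theorem~\ref{thm:estimator} and the estimator convergence of Theorem~\ref{thm:plain_convergence}; no new estimates are needed. First, I would observe that Theorem~\ref{thm:estimator} is stated for a generic discrete configuration (the $\bullet$-notation), so it applies verbatim at every step $\ell \in \N_0$ of Algorithm~\ref{algorithm}: the pair $(u_\ell, \widehat u_\ell)$, where $u_\ell \in \V_\ell$ solves \eqref{eq:discrete_formulation} and $\widehat u_\ell \in \widehat\V_\ell$ solves \eqref{eq:discrete_formulation:hat}, satisfies \eqref{eq:pythagoras} and \eqref{eq1:thm:estimator}. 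Invoking the saturation assumption \eqref{eq:saturation} for this pair then yields the reliability estimate \eqref{eq:reliability}, that is,
\[
 \enorm{u - u_\ell}{}^2 \le \frac{\Lambda \, \Cthm}{1 - \qsat^2} \, \est_\ell^2
 \qquad \text{for all } \ell \in \N_0 .
\]

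Second, I would invoke Theorem~\ref{thm:plain_convergence}, which guarantees $\est_\ell \to 0$ as $\ell \to \infty$ for any choice of the marking parameters $\thetaX, \thetaP, \vartheta$ and any of the four marking criteria. Since $\Cthm$ and $\Lambda$ depend only on $\TT_0$ and the mean field $a_0$ (hence are independent of $\ell$) and $\qsat$ is fixed, passing to the limit $\ell \to \infty$ in the displayed inequality immediately gives $\enorm{u - u_\ell}{} \to 0$, which is the claim.

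The only subtlety—and the closest thing to an obstacle—is the role of the saturation constant: the factor $(1-\qsat^2)^{-1}$ in \eqref{eq:reliability} must stay bounded along the sequence, so the hypothesis that the saturation assumption \eqref{eq:saturation} "holds for each pair $u_\ell, \widehat u_\ell$" has to be read as providing one constant $\qsat \in (0,1)$ valid uniformly in $\ell$; a level-dependent $\qsat(\ell) \uparrow 1$ would break the argument. Under this (natural) reading the corollary is an immediate consequence of the two preceding theorems, and the proof is complete.
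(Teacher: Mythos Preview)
Your proposal is correct and matches the paper's own argument exactly: the paper does not give a separate proof but simply states that the corollary is ``an immediate consequence of Theorem~\ref{thm:plain_convergence} and the reliability~\eqref{eq:reliability} from Theorem~\ref{thm:estimator}.'' Your remark about the uniformity of $\qsat$ in $\ell$ is a fair caveat, and the paper indeed reads the hypothesis this way (a single fixed constant $\qsat \in (0,1)$).
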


In 2D and under the saturation assumption~\eqref{eq:saturation},
Algorithm~\ref{algorithm}.\ref{marking:A} and Algorithm~\ref{algorithm}.\ref{marking:B}
allow for a stronger convergence result than Corollary~\ref{cor:plain_convergence}.
The following theorem states linear convergence of the energy error.
The proof is given in Section~\ref{section:linear_convergence}.

\begin{theorem}\label{thm:linear_convergence}
Let $d = 2$ and
let $(u_\ell)_{\ell \in \N_0}$ be the sequence of Galerkin solutions generated by
either Algorithm~\ref{algorithm}.\ref{marking:A} or Algorithm~\ref{algorithm}.\ref{marking:B}
with arbitrary $0 < \thetaX,\, \thetaP \le 1$ and $\vartheta>0$.
Denote by $(\widehat u_\ell)_{\ell \in \N_0}$ the associated sequence of Galerkin solutions satisfying~\eqref{eq:discrete_formulation:hat}
and suppose that the saturation assumption~\eqref{eq:saturation}
holds for each pair $u_\ell,\, \widehat u_\ell$ ($\ell \in \N_0$).
Then, there exists a constant $0 < \qlin < 1$ such that
\begin{equation*}
 \enorm{u-u_{\ell+1}}{} \le \qlin \, \enorm{u-u_{\ell}}{}
 \quad \text{for all }\, \ell \in \N_0.
\end{equation*}
The constant $\qlin$ depends only on the mean field $a_0$, the constant $\tau$ in~\eqref{eq3:a},
the saturation constant $\qsat$ in~\eqref{eq:saturation},
the coarse mesh $\TT_0$, and the marking parameters $\thetaX$, $\thetaP$, $\vartheta$.
\end{theorem}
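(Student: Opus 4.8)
\textbf{Proof strategy for Theorem~\ref{thm:linear_convergence}.}
The plan is to combine three ingredients: (i) the Pythagoras identity~\eqref{eq:pythagoras} on each level, (ii) the two-sided estimate from Corollary~\ref{cor:estimator} which, in 2D, relates the error reduction $\enorm{u_{\ell+1}-u_\ell}{}$ to the estimator contributions on the \emph{actually} refined nodes $\NN_\ell^+\cap\NN_{\ell+1}$ and the added indices $\MMM_\ell$, and (iii) the saturation assumption to turn efficiency into reliability. First I would establish an estimator reduction step: on each level, the D\"orfler property of the chosen marked set (in the dominant component) together with the case distinction in Criterion~\ref{marking:A} or~\ref{marking:B} shows that a fixed fraction of $\est_\ell^2$ is captured by $\est_\ell(\NN_\ell^+\cap\NN_{\ell+1},\MMM_\ell)^2$. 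Indeed, in Case~(a) of Criterion~\ref{marking:B} one has $\vartheta\,\est_\ell(\widetilde\MMM_\ell)\le\est_\ell(\widetilde\RR_\ell)$ and $\MM_\ell=\widetilde\MM_\ell$, so $\widetilde\RR_\ell = \NN_\ell^+\cap\NN_{\ell+1}$ and
\begin{equation*}
\est_\ell^2 = \est_\ell(\NN_\ell^+)^2 + \est_\ell(\QQQ_\ell)^2
\le \thetaX^{-2}\,\est_\ell(\widetilde\RR_\ell)^2 + \thetaP^{-2}\,\est_\ell(\widetilde\MMM_\ell)^2
\le \big(\thetaX^{-2} + \thetaP^{-2}\vartheta^{-2}\big)\,\est_\ell(\widetilde\RR_\ell)^2,
\end{equation*}
and symmetrically in Case~(b); an analogous (slightly simpler) computation handles Criterion~\ref{marking:A}. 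Hence there is a constant $0<C_{\mathrm{red}}<\infty$, depending only on $\thetaX,\thetaP,\vartheta$, with $\est_\ell^2 \le C_{\mathrm{red}}\,\est_\ell(\NN_\ell^+\cap\NN_{\ell+1},\MMM_\ell)^2$.

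Next I would chain this with Corollary~\ref{cor:estimator} and Theorem~\ref{thm:estimator}. From~\eqref{eq1:cor:estimator}, $\est_\ell(\NN_\ell^+\cap\NN_{\ell+1},\MMM_\ell)^2 \le \tfrac{K}{\lambda}\,\enorm{u_{\ell+1}-u_\ell}{}^2$, while from~\eqref{eq1:thm:estimator} and~\eqref{eq:pythagoras}, $\enorm{u-u_\ell}{}^2 - \enorm{u-u_{\ell+1}}{}^2 = \enorm{u_{\ell+1}-u_\ell}{}^2$. Combining,
\begin{equation*}
\enorm{u-u_{\ell+1}}{}^2
= \enorm{u-u_\ell}{}^2 - \enorm{u_{\ell+1}-u_\ell}{}^2
\le \enorm{u-u_\ell}{}^2 - \frac{\lambda}{K\,C_{\mathrm{red}}}\,\est_\ell^2.
\end{equation*}
Now reliability~\eqref{eq:reliability} gives $\est_\ell^2 \ge \tfrac{1-\qsat^2}{\Lambda\Cthm}\,\enorm{u-u_\ell}{}^2$, so
\begin{equation*}
\enorm{u-u_{\ell+1}}{}^2 \le \Big(1 - \frac{\lambda\,(1-\qsat^2)}{K\,C_{\mathrm{red}}\,\Lambda\,\Cthm}\Big)\,\enorm{u-u_\ell}{}^2 =: \qlin^2\,\enorm{u-u_\ell}{}^2,
\end{equation*}
and one checks $0<\qlin<1$ since the subtracted quantity is strictly positive and (using efficiency $\est_\ell^2 \le \tfrac{K}{\lambda}\enorm{u-u_\ell}{}^2$, which forces $\tfrac{\lambda(1-\qsat^2)}{K C_{\mathrm{red}}\Lambda\Cthm}\le 1$) at most $1$. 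Tracking constants shows $\qlin$ depends only on $a_0$, $\tau$, $\qsat$, $\TT_0$, and $\thetaX,\thetaP,\vartheta$, as claimed.

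\textbf{Main obstacle.} The delicate point is that Corollary~\ref{cor:estimator} is only available in 2D, because it rests on the identity $\varphi_{\circ,z}=\widehat\varphi_{\bullet,z}$ for newly created interior vertices under 2D NVB—this is exactly why the theorem is restricted to $d=2$ and to Criteria~\ref{marking:A},~\ref{marking:B} (for which the marked spatial set feeding $\widetilde\RR_\ell$ is the D\"orfler set, so that $\NN_\ell^+\cap\NN_{\ell+1}$ still satisfies a D\"orfler-type bulk bound). One must be careful that when Algorithm~\ref{algorithm} performs a \emph{parametric} step ($\MM_\ell=\emptyset$), the relevant contribution in Corollary~\ref{cor:estimator} is $\est_\ell(\emptyset,\MMM_\ell)=\est_\ell(\MMM_\ell)$, and the case distinction guarantees $\est_\ell(\MMM_\ell)$ still controls a fixed fraction of the full $\est_\ell$; the bookkeeping of which of Cases~(a)/(b) is active, and matching it to the correct inclusion $\V_\ell\subset\V_{\ell+1}$, is the part that needs the most care but is otherwise routine.
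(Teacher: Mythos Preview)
Your proof is correct and follows essentially the same approach as the paper: both combine the Pythagoras identity, the lower bound from Corollary~\ref{cor:estimator}, the D\"orfler property in the dominant component together with the case distinction of the marking criterion, and reliability~\eqref{eq:reliability} to obtain the contraction factor $1 - \frac{\lambda(1-\qsat^2)}{K\,C_{\mathrm{red}}\,\Lambda\,\Cthm}$. The paper merely organizes the argument into two separate lemmas (one for the spatial-refinement case, one for the parametric-enrichment case) rather than treating both cases uniformly via a single constant $C_{\mathrm{red}}$ as you do.
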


\section{Numerical results} \label{sec:numer:results}

In this section, we report the results of numerical experiments aiming to underpin our theoretical findings
and compare the performance of Algorithms~\ref{algorithm}.\ref{marking:A}--\ref{algorithm}.\ref{marking:D}
for a range of marking parameters.
The experiments were performed using the open source 
MATLAB toolbox Stochastic T-IFISS \cite{BespalovR_stoch_tifiss}.

We consider the parametric model problem~\eqref{eq:strongform} posed on the L-shaped domain 
$D = (-1,1)^2\setminus (-1,0]^2 \subset \mathbb{R}^2$ and set $f \equiv 1$.
Following~\cite[Section~11.1]{egsz14}, we choose the expansion coefficients $a_m$ ($m \in \N_0$)
in~\eqref{eq1:a} to represent planar Fourier modes of increasing total order, i.e., 
\begin{equation*}
a_0(x) := 1, 
\quad
a_m(x) := \alpha_m \cos(2\pi\beta_1(m) \, x_1) \cos(2\pi\beta_2(m)\, x_2), 
\quad x = (x_1,x_2)\in D. 
\end{equation*}
Here, for all $m \in \N$, $\alpha_m := A m^{-\sigma}$ is the amplitude of the coefficient,
where $\sigma > 1$ and $0 < A < 1/\zeta(\sigma)$, with $\zeta$ denoting the Riemann zeta function, 
while $\beta_1$ and $\beta_2$ are defined as
\begin{equation*}
\beta_1(m) := m - k(m)(k(m) + 1)/2 
\quad \text{and} \quad
\beta_2(m) := k(m) - \beta_1(m),
\end{equation*}
with $k(m) := \lfloor -1/2  + \sqrt{1/4+2m}\rfloor$. 
Note that under these assumptions, both conditions \eqref{eq2:a} and \eqref{eq3:a} are satisfied
with $a_0^{\text{min}} = a_0^{\text{max}} = 1$ and $\tau = A \zeta(\sigma)$, respectively. 
We consider the case of $\sigma=2$, which corresponds to a slow decay of the coefficients;
fixing $\tau=A\zeta(\sigma)=0.9$, this results in $A\approx 0.547$.
Furthermore, we assume that the parameters $y_m$ ($m \in \N$) in \eqref{eq1:a} are the images of 
uniformly distributed independent mean-zero random variables on $[-1,1]$. 
In this case, $\dpi_m(y_m) = \mathrm{d}y_m/2$ and the orthonormal polynomial basis of 
$L^2_{\pi_m}(-1,1)$ consists of scaled Legendre polynomials.
Note that the same model problem was used in numerical experiments 
in, e.g., \cite{egsz14, egsz15, em16, br18, bprr18}.

\begin{table}[t!]
\begin{center}
\setlength\tabcolsep{2.9pt} 
\smallfontthree{
\renewcommand{\arraystretch}{1.85}
\begin{tabular}{c |*{9}{c} c } 
\noalign{\hrule height 1.0pt}
& \multicolumn{9}{c}{ $\thetaP$ } \\
\cline{2-10}
& 0.1 & 0.2 & 0.3 & 0.4 & 0.5 & 0.6 & 0.7 & 0.8 & 0.9 \\
\hline\\[-13pt]
\begin{tabular}{@{}c@{}}{\bf Algorithm~\ref{algorithm}.\ref{marking:A}} \\[-5pt] ($\thetaX=0.8$)\end{tabular} 
& \begin{tabular}{@{}c@{}}2,454,929 \end{tabular}
& \begin{tabular}{@{}c@{}}2,454,929 \end{tabular}
& \begin{tabular}{@{}c@{}}2,454,929 \end{tabular}
& \begin{tabular}{@{}c@{}}2,454,929 \end{tabular}
& \begin{tabular}{@{}c@{}}2,454,929 \end{tabular}
& \begin{tabular}{@{}c@{}}2,403,912 \end{tabular}
& \begin{tabular}{@{}c@{}}1,628,563 \end{tabular}
& \begin{tabular}{@{}c@{}}{\bf 1,560,286} \end{tabular}
& \begin{tabular}{@{}c@{}}1,731,044 \end{tabular} \\  
\rowcolor{myLightGray}
\begin{tabular}{@{}c@{}}{\bf Algorithm~\ref{algorithm}.\ref{marking:B}} \\[-4pt] ($\thetaX=0.7$)\end{tabular} 
& \begin{tabular}{@{}c@{}}3,157,697 \end{tabular}
& \begin{tabular}{@{}c@{}}3,157,697 \end{tabular}
& \begin{tabular}{@{}c@{}}3,157,697 \end{tabular}
& \begin{tabular}{@{}c@{}}3,157,697 \end{tabular}
& \begin{tabular}{@{}c@{}}3,157,697 \end{tabular}
& \begin{tabular}{@{}c@{}}2,146,095 \end{tabular}
& \begin{tabular}{@{}c@{}}1,973,460 \end{tabular}
& \begin{tabular}{@{}c@{}}1,966,801 \end{tabular}
& \begin{tabular}{@{}c@{}}{\bf 1,488,993} \end{tabular} \\
\begin{tabular}{@{}c@{}}{\bf Algorithm~\ref{algorithm}.\ref{marking:C}} \\[-5pt] ($\thetaX=0.7$)\end{tabular} 
& \begin{tabular}{@{}c@{}}2,094,382 \end{tabular}
& \begin{tabular}{@{}c@{}}1,891,752 \end{tabular}
& \begin{tabular}{@{}c@{}}1,970,087 \end{tabular}
& \begin{tabular}{@{}c@{}}2,014,430 \end{tabular}
& \begin{tabular}{@{}c@{}}{\bf 1,496,851} \end{tabular}
& \begin{tabular}{@{}c@{}}1,710,029 \end{tabular}
& \begin{tabular}{@{}c@{}}1,793,937 \end{tabular}
& \begin{tabular}{@{}c@{}}2,185,402 \end{tabular}
& \begin{tabular}{@{}c@{}}1,837,025 \end{tabular}\\  
\rowcolor{myLightGray}
\begin{tabular}{@{}c@{}}{\bf Algorithm~\ref{algorithm}.\ref{marking:D}} \\[-4pt] ($\thetaX=0.7$) \end{tabular} 
& \begin{tabular}{@{}c@{}}2,146,095 \end{tabular}
& \begin{tabular}{@{}c@{}}1,952,007 \end{tabular}
& \begin{tabular}{@{}c@{}}2,000,424 \end{tabular}
& \begin{tabular}{@{}c@{}}1,966,801 \end{tabular}
& \begin{tabular}{@{}c@{}}$\mathbf{1,460,210^\star}$ \end{tabular}
& \begin{tabular}{@{}c@{}}1,604,638 \end{tabular}
& \begin{tabular}{@{}c@{}}1,740,662 \end{tabular}
& \begin{tabular}{@{}c@{}}2,050,900 \end{tabular}
& \begin{tabular}{@{}c@{}}1,855,200 \end{tabular}\\
\noalign{\hrule height 1.0pt}
\end{tabular}
\vspace{8pt}
\caption{
Computational cost~\eqref{eq:cost} of Algorithms~\ref{algorithm}.\ref{marking:A}--\ref{algorithm}.\ref{marking:D}.
For each algorithm, we choose the spatial marking parameter $\thetaX \in \Theta$ for which
the smallest cost is incurred (see Tables~\ref{tab:fulldata:A}--\ref{tab:fulldata:D} in Appendix~\ref{sec:appendix})
and show the computational cost for all $\thetaP \in \Theta$.
The smallest cost for each algorithm is highlighted in boldface in the corresponding row.
The boldface starred value shows the overall smallest cost, i.e., 
the smallest cost among all computations with $81$ pairs $(\thetaX,\thetaP) \in \Theta\times\Theta$
for all four algorithms.
}
\label{tab:best:cost}
}
\end{center} 
\end{table}

\begin{figure}[t!]
\begin{tikzpicture}
\pgfplotstableread{data/lshaped/criterionA/varth1_thX0.8_thP0.1.dat}{\one}
\pgfplotstableread{data/lshaped/criterionA/varth1_thX0.8_thP0.3.dat}{\three}
\pgfplotstableread{data/lshaped/criterionA/varth1_thX0.8_thP0.5.dat}{\five}
\pgfplotstableread{data/lshaped/criterionA/varth1_thX0.8_thP0.7.dat}{\seven}
\pgfplotstableread{data/lshaped/criterionA/varth1_thX0.8_thP0.9.dat}{\nine}
\pgfplotstableread{data/lshaped/criterionA/varth1_thX0.8_thP1.dat}{\ten}
\begin{loglogaxis}
[
width = 8.1cm, height=6.4cm,								
title={{\bf Algorithm~\ref{algorithm}.\ref{marking:A}} ($\thetaX = 0.8$)},	
xlabel={degree of freedom, $N_\ell$}, 						
ylabel={overall error estimate, $\est_\ell$},				
ymajorgrids=true, xmajorgrids=true, grid style=dashed,		
xmin = (2.5)*10^(1),
xmax = (2.5)*10^(6),
ymin = (4)*10^(-3),
ymax = (1.6)*10^(-1),
legend style={legend pos=south west, legend cell align=left, fill=none, draw=none, font={\fontsize{9pt}{12pt}\selectfont}}
]
\addplot[myViolet,line width=1.5pt,densely dashed]		table[x=dofs, y=error]{\one};
\addplot[red,line width=1.5pt,densely dashed]			table[x=dofs, y=error]{\three};
\addplot[blue,line width=1.5pt,densely dashed]			table[x=dofs, y=error]{\five};
\addplot[teal,line width=1.5pt,densely dashed]			table[x=dofs, y=error]{\seven};
\addplot[myOrange,line width=1.5pt,densely dashed]		table[x=dofs, y=error]{\nine};
\addplot[myBrown,line width=1.5pt,densely dashed]		table[x=dofs, y=error]{\ten};
\addplot[black,solid,domain=10^(1.5):10^(7.8)] { 1.0*x^(-0.34) };
\node at (axis cs:3e4,3e-2) [anchor=south west] {$\mathcal{O}(N_\ell^{-0.34})$};
\legend{
{$\thetaP = 0.1$},
{$\thetaP = 0.3$},
{$\thetaP = 0.5$},
{$\thetaP = 0.7$},
{$\thetaP = 0.9$},
{$\thetaP = 1$},
}
\end{loglogaxis}
\end{tikzpicture}
\hfill
\begin{tikzpicture}
\pgfplotstableread{data/lshaped/criterionB/varth1_thX0.7_thP0.1.dat}{\one}
\pgfplotstableread{data/lshaped/criterionB/varth1_thX0.7_thP0.3.dat}{\three}
\pgfplotstableread{data/lshaped/criterionB/varth1_thX0.7_thP0.5.dat}{\five}
\pgfplotstableread{data/lshaped/criterionB/varth1_thX0.7_thP0.7.dat}{\seven}
\pgfplotstableread{data/lshaped/criterionB/varth1_thX0.7_thP0.9.dat}{\nine}
\pgfplotstableread{data/lshaped/criterionB/varth1_thX0.7_thP1.dat}{\ten}
\begin{loglogaxis}
[
width = 8.1cm, height=6.4cm,								
title={{\bf Algorithm~\ref{algorithm}.\ref{marking:B}} ($\thetaX = 0.7$)},	
xlabel={degree of freedom, $N_\ell$}, 						
ylabel={overall error estimate, $\est_\ell$},				
ymajorgrids=true, xmajorgrids=true, grid style=dashed,		
xmin = (2.5)*10^(1),
xmax = (2.5)*10^(6),
ymin = (4)*10^(-3),
ymax = (1.6)*10^(-1),
legend style={legend pos=south west, legend cell align=left, fill=none, draw=none, font={\fontsize{9pt}{12pt}\selectfont}}
]
\addplot[myViolet,line width=1.5pt,densely dashed]		table[x=dofs, y=error]{\one};
\addplot[red,line width=1.5pt,densely dashed]		table[x=dofs, y=error]{\three};
\addplot[blue,line width=1.5pt,densely dashed]	table[x=dofs, y=error]{\five};
\addplot[teal,line width=1.5pt,densely dashed]table[x=dofs, y=error]{\seven};
\addplot[myOrange,line width=1.5pt,densely dashed]	table[x=dofs, y=error]{\nine};
\addplot[myBrown,line width=1.5pt,densely dashed]	table[x=dofs, y=error]{\ten};
\addplot[black,solid,domain=10^(1.5):10^(7.8)] { 1.0*x^(-0.34) };
\node at (axis cs:3e4,3e-2) [anchor=south west] {$\mathcal{O}(N_\ell^{-0.34})$};
\legend{
{$\thetaP = 0.1$},
{$\thetaP = 0.3$},
{$\thetaP = 0.5$},
{$\thetaP = 0.7$},
{$\thetaP = 0.9$},
{$\thetaP = 1$}
}
\end{loglogaxis}
\end{tikzpicture}\\
\begin{tikzpicture}
\pgfplotstableread{data/lshaped/criterionC/varth1_thX0.7_thP0.1.dat}{\one}
\pgfplotstableread{data/lshaped/criterionC/varth1_thX0.7_thP0.3.dat}{\three}
\pgfplotstableread{data/lshaped/criterionC/varth1_thX0.7_thP0.5.dat}{\five}
\pgfplotstableread{data/lshaped/criterionC/varth1_thX0.7_thP0.7.dat}{\seven}
\pgfplotstableread{data/lshaped/criterionC/varth1_thX0.7_thP0.9.dat}{\nine}
\pgfplotstableread{data/lshaped/criterionA/varth1_thX0.7_thP1.dat}{\ten}
\begin{loglogaxis}
[
width = 8.1cm, height=6.4cm,								
title={{\bf Algorithm~\ref{algorithm}.\ref{marking:C}} ($\thetaX = 0.7$)},				
xlabel={degree of freedom, $N_\ell$}, 						
ylabel={overall error estimate, $\est_\ell$},				
ymajorgrids=true, xmajorgrids=true, grid style=dashed,		
xmin = (2.5)*10^(1),
xmax = (2.5)*10^(6),
ymin = (4)*10^(-3),
ymax = (1.6)*10^(-1),
legend style={legend pos=south west, legend cell align=left, fill=none, draw=none, font={\fontsize{9pt}{12pt}\selectfont}},
]
\addplot[myViolet,line width=1.5pt,densely dashed]		table[x=dofs, y=error]{\one};
\addplot[red,line width=1.5pt,densely dashed]			table[x=dofs, y=error]{\three};
\addplot[blue,line width=1.5pt,densely dashed]			table[x=dofs, y=error]{\five};
\addplot[teal,line width=1.5pt,densely dashed]			table[x=dofs, y=error]{\seven};
\addplot[myOrange,line width=1.5pt,densely dashed]		table[x=dofs, y=error]{\nine};
\addplot[myBrown,line width=1.5pt,densely dashed]		table[x=dofs, y=error]{\ten};
\addplot[black,solid,domain=10^(1.5):10^(7.8)] { 1.0*x^(-0.34) };
\node at (axis cs:3e4,3e-2) [anchor=south west] {$\mathcal{O}(N_\ell^{-0.34})$};
\legend{
{$\thetaP = 0.1$},
{$\thetaP = 0.3$},
{$\thetaP = 0.5$},
{$\thetaP = 0.7$},
{$\thetaP = 0.9$},
{$\thetaP = 1$}
}
\end{loglogaxis}
\end{tikzpicture}
\hfill
\begin{tikzpicture}
\pgfplotstableread{data/lshaped/criterionD/varth1_thX0.7_thP0.1.dat}{\one}
\pgfplotstableread{data/lshaped/criterionD/varth1_thX0.7_thP0.3.dat}{\three}
\pgfplotstableread{data/lshaped/criterionD/varth1_thX0.7_thP0.5.dat}{\five}
\pgfplotstableread{data/lshaped/criterionD/varth1_thX0.7_thP0.7.dat}{\seven}
\pgfplotstableread{data/lshaped/criterionD/varth1_thX0.7_thP0.9.dat}{\nine}
\pgfplotstableread{data/lshaped/criterionB/varth1_thX0.7_thP1.dat}{\ten}
\begin{loglogaxis}
[
width = 8.1cm, height=6.4cm,								
title={{\bf Algorithm~\ref{algorithm}.\ref{marking:D}} ($\thetaX = 0.7$)},	
xlabel={degree of freedom, $N_\ell$}, 						
ylabel={overall error estimate, $\est_\ell$},				
ymajorgrids=true, xmajorgrids=true, grid style=dashed,		
xmin = (2.5)*10^(1),
xmax = (2.5)*10^(6),
ymin = (4)*10^(-3),
ymax = (1.6)*10^(-1),
legend style={legend pos=south west, legend cell align=left, fill=none, draw=none, font={\fontsize{9pt}{12pt}\selectfont}}
]
\addplot[myViolet,line width=1.5pt,densely dashed]		table[x=dofs, y=error]{\one};
\addplot[red,line width=1.5pt,densely dashed]			table[x=dofs, y=error]{\three};
\addplot[blue,line width=1.5pt,densely dashed]			table[x=dofs, y=error]{\five};
\addplot[teal,line width=1.5pt,densely dashed]			table[x=dofs, y=error]{\seven};
\addplot[myOrange,line width=1.5pt,densely dashed]		table[x=dofs, y=error]{\nine};
\addplot[myBrown,line width=1.5pt,densely dashed]		table[x=dofs, y=error]{\ten};
\addplot[black,solid,domain=10^(1.5):10^(7.8)] { 1.0*x^(-0.34) };
\node at (axis cs:3e4,3e-2) [anchor=south west] {$\mathcal{O}(N_\ell^{-0.34})$};
\legend{
{$\thetaP = 0.1$},
{$\thetaP = 0.3$},
{$\thetaP = 0.5$},
{$\thetaP = 0.7$},
{$\thetaP = 0.9$},
{$\thetaP = 1$}
}
\end{loglogaxis}
\end{tikzpicture}
\caption{
Decay of the overall error estimates $\est_\ell$ computed at each iteration of
Algorithm~\ref{algorithm}.\ref{marking:A} with $\thetaX=0.8$ and 
Algorithms~\ref{algorithm}.\ref{marking:B}--\ref{algorithm}.\ref{marking:D} with $\thetaX=0.7$,
for $\thetaP \in \{ 0.1, 0.3, 0.5, 0.7, 0.9 , 1\}$.
}
\label{Exp1:data:1}
\end{figure}

We compare the performance of Algorithms~\ref{algorithm}.\ref{marking:A}--\ref{algorithm}.\ref{marking:D}
with respect to a measure of the total amount of work needed to reach a prescribed tolerance~$\tol$.
Let $L=L(\tol) \in \N$ be the smallest integer such that $\est_L \leq \tol$, 
and let $N_\ell := \dim(\V_\ell) = \dim(\X_\ell)\dim(\P_\ell)$ be the total number of degrees 
of freedom at the $\ell$-th iteration. 
We define the computational \emph{cost} of Algorithm~\ref{algorithm}
as the cumulative number of degrees of freedom for all iterations of the adaptive loop, i.e.,
\begin{equation} \label{eq:cost}
\cost = \cost(L) := \sum_{\ell=0}^{L} N_\ell.
\end{equation}

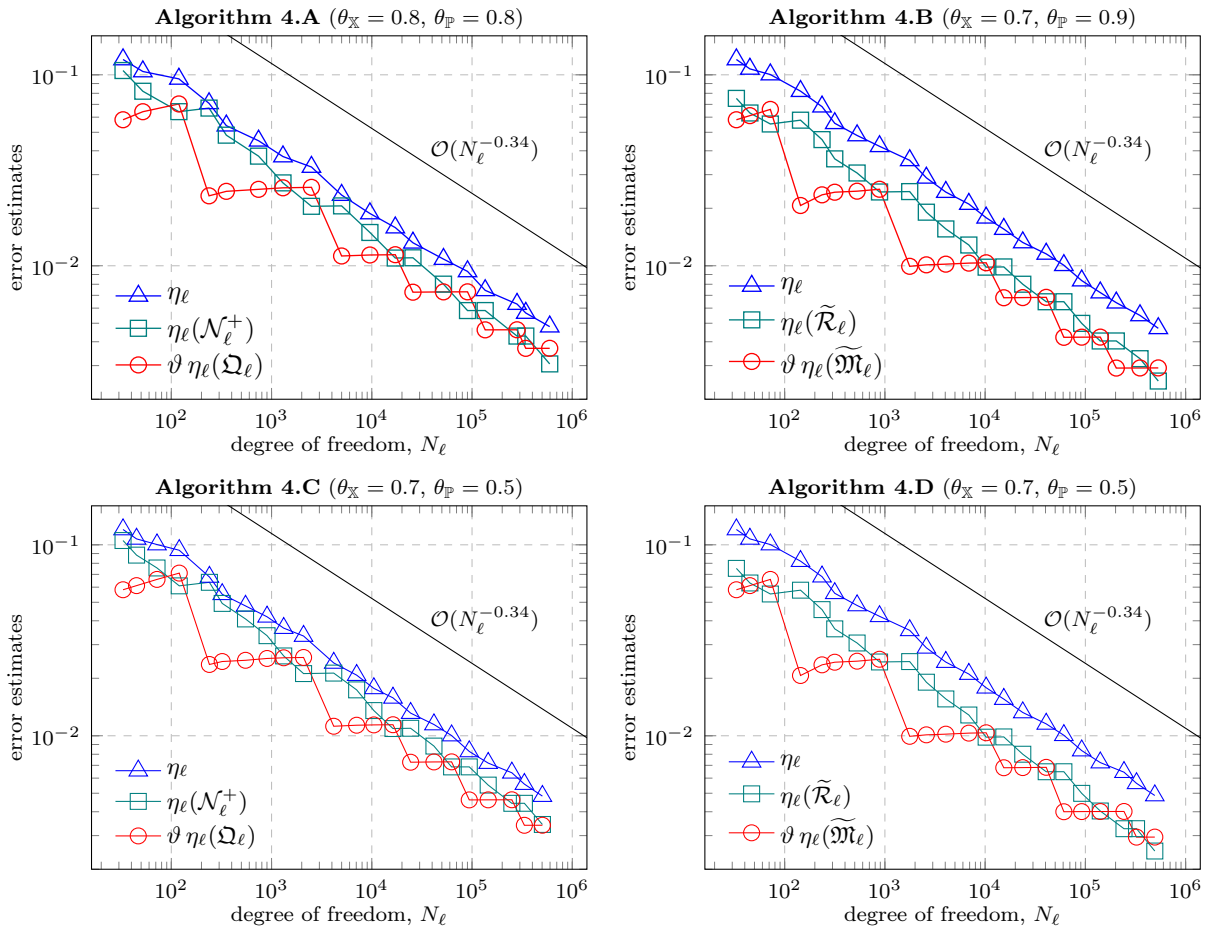
\begin{figure}[h!]
\begin{tikzpicture}
\pgfplotstableread{data/lshaped/criterionA/varth1_thX0.8_thP0.8.dat}{\one}
\begin{loglogaxis}
[
width = 8.1cm, height=6.4cm,								
title={{\bf Algorithm~\ref{algorithm}.\ref{marking:A}} ($\thetaX=0.8$, $\thetaP = 0.8$)},
xlabel={degree of freedom, $N_\ell$},						
ylabel={error estimates},									
ymajorgrids=true, xmajorgrids=true, grid style=dashed,		
xmin = (1.6)*10^(1),
xmax = (1.4)*10^(6),
ymin = (2.0)*10^(-3),
ymax = (1.6)*10^(-1),
legend style={legend pos=south west, legend cell align=left, fill=none, draw=none, font={\fontsize{10pt}{12pt}\selectfont}}
]
\addplot[blue,mark=triangle,mark size=4.0pt,line width=0.5]	table[x=dofs, y=error]{\one};
\addplot[teal,mark=square,mark size=3.0pt,line width=0.5]	table[x=dofs, y=yp_one]{\one};
\addplot[red,mark=o,mark size=3.0pt,line width=0.5]			table[x=dofs, y=xq_one]{\one};
\addplot[black,solid,domain=10^(1.5):10^(7.8)] { 1.2*x^(-0.34) };
\node at (axis cs:3e4,3e-2) [anchor=south west] {$\mathcal{O}(N_\ell^{-0.34})$};
\legend{
{$\est_\ell$},
{$\est_\ell(\NN_\ell^+)$},
{$\vartheta\,\est_\ell(\QQQ_\ell)$},
}
\end{loglogaxis}
\end{tikzpicture}
\hfill
\begin{tikzpicture}
\pgfplotstableread{data/lshaped/criterionB/varth1_thX0.7_thP0.9.dat}{\one}
\begin{loglogaxis}
[
width = 8.1cm, height=6.4cm,								
title={{\bf Algorithm~\ref{algorithm}.\ref{marking:B}} ($\thetaX = 0.7$, $\thetaP = 0.9$)},	
xlabel={degree of freedom, $N_\ell$},						
ylabel={error estimates},									
ymajorgrids=true, xmajorgrids=true, grid style=dashed,		
xmin = (1.6)*10^(1),
xmax = (1.4)*10^(6),
ymin = (2.0)*10^(-3),
ymax = (1.6)*10^(-1),
legend style={legend pos=south west, legend cell align=left, fill=none, draw=none, font={\fontsize{10pt}{12pt}\selectfont}}
]
\addplot[blue,mark=triangle,mark size=4.0pt,line width=0.5]	table[x=dofs, y=error]{\one};
\addplot[teal,mark=square,mark size=3.0pt,line width=0.5]	table[x=dofs, y=yp_two]{\one};
\addplot[red,mark=o,mark size=3.0pt,line width=0.5]			table[x=dofs, y=xq_two]{\one};
\addplot[black,solid,domain=10^(1.5):10^(7.8)] 			{ 1.2*x^(-0.34) };
\node at (axis cs:3e4,3e-2) [anchor=south west] {$\mathcal{O}(N_\ell^{-0.34})$};
\legend{
{$\est_\ell$},
{$\est_\ell(\Rtildel)$},
{$\vartheta\,\est_\ell(\Mtildel)$},
}
\end{loglogaxis}
\end{tikzpicture}\\
\begin{tikzpicture}
\pgfplotstableread{data/lshaped/criterionC/varth1_thX0.7_thP0.5.dat}{\one}
\begin{loglogaxis}
[
width = 8.1cm, height=6.4cm,								
title={{\bf Algorithm~\ref{algorithm}.\ref{marking:C}} ($\thetaX = 0.7$, $\thetaP = 0.5$)},	
xlabel={degree of freedom, $N_\ell$},						
ylabel={error estimates},									
ymajorgrids=true, xmajorgrids=true, grid style=dashed,		
xmin = (1.6)*10^(1),
xmax = (1.4)*10^(6),
ymin = (2.0)*10^(-3),
ymax = (1.6)*10^(-1),
legend style={legend pos=south west, legend cell align=left, fill=none, draw=none, font={\fontsize{9pt}{12pt}\selectfont}}
]
\addplot[blue,mark=triangle,mark size=4.0pt]	table[x=dofs, y=error]{\one};
\addplot[teal,mark=square,mark size=3.0pt]		table[x=dofs, y=yp_one]{\one};
\addplot[red,mark=o,mark size=3pt]				table[x=dofs, y=xq_one]{\one};
\addplot[black,solid,domain=10^(1.5):10^(7.8)] { 1.2*x^(-0.34) };
\node at (axis cs:3e4,3e-2) [anchor=south west] {$\mathcal{O}(N_\ell^{-0.34})$};
\legend{
{$\est_\ell$},
{$\est_\ell(\NN_\ell^+)$},
{$\vartheta\,\est_\ell(\QQQ_\ell)$},
}
\end{loglogaxis}
\end{tikzpicture}
\hfill
\begin{tikzpicture}
\pgfplotstableread{data/lshaped/criterionD/varth1_thX0.7_thP0.5.dat}{\one}
\begin{loglogaxis}
[
width = 8.1cm, height=6.4cm,								
title={{\bf Algorithm~\ref{algorithm}.\ref{marking:D}} ($\thetaX = 0.7$, $\thetaP = 0.5$)},					
xlabel={degree of freedom, $N_\ell$}, 						
ylabel={error estimates},							
ymajorgrids=true, xmajorgrids=true, grid style=dashed,		
xmin = (1.6)*10^(1),
xmax = (1.4)*10^(6),
ymin = (2.0)*10^(-3),
ymax = (1.6)*10^(-1),
legend style={legend pos=south west, legend cell align=left, fill=none, draw=none, font={\fontsize{9pt}{12pt}\selectfont}}
]
\addplot[blue,mark=triangle,mark size=4.0pt]	table[x=dofs, y=error]{\one};
\addplot[teal,mark=square,mark size=3.0pt]		table[x=dofs, y=yp_two]{\one};
\addplot[red,mark=o,mark size=3pt]			table[x=dofs, y=xq_two]{\one};
\addplot[black,solid,domain=10^(1.5):10^(7.8)] { 1.2*x^(-0.34) };
\node at (axis cs:3e4,3e-2) [anchor=south west] {$\mathcal{O}(N_\ell^{-0.34})$};
\legend{
{$\est_\ell$},
{$\est_\ell(\Rtildel)$},
{$\vartheta\,\est_\ell(\Mtildel)$},
}
\end{loglogaxis}
\end{tikzpicture}
\caption{
Decay of the error estimates computed at each iteration of
Algorithms~\ref{algorithm}.\ref{marking:A}--\ref{algorithm}.\ref{marking:D}
with the marking parameters $\thetaX,\,\thetaP \in \Theta$ that yield smallest cost (see Table~\ref{tab:best:cost}).
}
\label{Exp1:data:3}
\end{figure}

We set $\tol=5$e-$03$ and run Algorithms~\ref{algorithm}.\ref{marking:A}--\ref{algorithm}.\ref{marking:D}
with marking parameters $\thetaX,\,\thetaP \in \Theta := \{ 0.1, 0.2, \dots, 0.9 \}$
(we set $\vartheta=1$ in each Marking criterion~\ref{marking:A}--\ref{marking:D}).
The computational costs and the empirical convergence rates for each algorithm
with $81$ pairs $(\thetaX,\thetaP)\in\Theta\times\Theta$ of marking parameters are shown
in Tables~\ref{tab:fulldata:A}--\ref{tab:fulldata:D} in Appendix~\ref{sec:appendix}.
A~snapshot of these results is presented in Table~\ref{tab:best:cost}.
The results show that the overall smallest cost is achieved by Algorithm~\ref{algorithm}.\ref{marking:D}
for the values $\thetaX=0.7$ and $\thetaP=0.5$. 
These values of marking parameters are the ones for which also Algorithm~\ref{algorithm}.\ref{marking:C} 
yields the smallest cost among all pairs $(\thetaX,\thetaP) \in \Theta\times\Theta$.
This similarity does not hold for Algorithms~\ref{algorithm}.\ref{marking:A}--\ref{algorithm}.\ref{marking:B},
for which the smallest cost is achieved with 
$\thetaX = \thetaP = 0.8$ for Algorithm~\ref{algorithm}.\ref{marking:A} 
and with $\thetaX=0.7$ and $\thetaP=0.9$ for Algorithm~\ref{algorithm}.\ref{marking:B}.
Thus, we conclude that, for the above values of marking parameters,
the adaptive algorithms with refinements driven by dominant error reduction estimates
(Algorithms~\ref{algorithm}.\ref{marking:B} and~\ref{algorithm}.\ref{marking:D})
incur less computational costs than their counterparts
driven by dominant contributing error estimates
(Algorithms~\ref{algorithm}.\ref{marking:A} and~\ref{algorithm}.\ref{marking:C}).
On the other hand, the algorithms that employ the maximum criterion for parametric refinement
(Algorithms~\ref{algorithm}.\ref{marking:C} and~\ref{algorithm}.\ref{marking:D})
incur less computational costs than their counterparts that use D{\"o}rfler marking
(Algorithms~\ref{algorithm}.\ref{marking:A} and~\ref{algorithm}.\ref{marking:B}).
Overall, the smallest computational cost is incurred by the algorithm that combines
these two winning strategies---Algorithm~\ref{algorithm}.\ref{marking:D}.

Figure~\ref{Exp1:data:1} shows the decay of the overall error estimate $\est_\ell$ 
versus the number of degrees of freedom $N_\ell$ for different values of $\thetaP \in \Theta$
with $\thetaX=0.8$ in Algorithm~\ref{algorithm}.\ref{marking:A}
and $\thetaX=0.7$ in Algorithms~\ref{algorithm}.\ref{marking:B}--\ref{algorithm}.\ref{marking:D}. 
The aim of these plots is to show that the adaptive algorithm converges
regardless of the marking criterion and the value of $\thetaP$ used
(similar decay rates are obtained for other values of $\thetaX,\,\thetaP \in \Theta$;
see Appendix~\ref{sec:appendix}).
Observe that $\est_\ell$ decays also in the case $\thetaP = 1 \notin \Theta$
for all algorithms.
However, in this case, significantly more degrees of freedom are needed to reach the prescribed tolerance, 
compared to the cases of $\thetaP \in \Theta$. 
This is because, for $\thetaP = 1$, each parametric enrichment is performed
by augmenting the index set $\PPP_\ell$ with the whole detail index set $\QQQ_\ell$.

In Figure~\ref{Exp1:data:3}, we plot the decay of all error estimates computed by 
the four algorithms with the pairs of marking parameters yielding the corresponding smallest cost.
As expected, we see that the decay rates of $\est_\ell$ are similar in all four cases. 

To conclude, we test the effectiveness of our error estimation strategy by computing
a reference energy error as follows. 
We first compute an accurate solution 
$\uref \in \V_{\text{ref}}~:= \X_{\text{ref}}\otimes \P_{\text{ref}}$ using 
quadratic (P2) finite element approximations over a fine mesh 
$\TT_{\text{ref}}$ and employing a large index set $\PPP_{\text{ref}}$. 
Then, we define the effectivity indices
\begin{equation*}
\zeta_\ell := \frac{\est_\ell}{ \enorm{u_{\text{ref}} - u_\ell}{} } 
= \frac{\est_\ell}{ (\enorm{u_{\text{ref}}}{}^2 - \enorm{u_\ell}{}^2)^{1/2} }
\quad \text{for all $\ell=0,\dots,L$},
\end{equation*}
where the equality holds due to Galerkin orthogonality and the symmetry of the bilinear form $B(\cdot,\cdot)$.
In this experiment, we choose $\TT_{\mathrm{ref}}$ to be the uniform refinement of the 
mesh $\TT_L$ generated by Algorithm~\ref{algorithm}.\ref{marking:B} with $\thetaP=0.5$ 
(i.e., one of the final meshes with the largest number of elements) 
and $\PPP_{\mathrm{ref}}$ to be the final index set $\PPP_L$ produced by 
Algorithm~\ref{algorithm}.\ref{marking:D} with $\thetaP=0.8$ (i.e., one of the largest index sets generated). 

\begin{figure}[t!]
\begin{tikzpicture}
\pgfplotstableread{data/lshaped/criterionA/varth1_thX0.8_thP0.8.dat}{\one}
\pgfplotstableread{data/lshaped/criterionB/varth1_thX0.7_thP0.9.dat}{\two}
\begin{semilogxaxis}
[
width = 8.1cm, height=6cm,								
xlabel={degree of freedom, $N_\ell$}, 					
ylabel={effectivity index, $\zeta_\ell$},				
ymajorgrids=true, xmajorgrids=true, grid style=dashed,	
xmin=(2)*10^(1), xmax=(8)*10^(5),						
ymin = 0.69,	 ymax = 0.85,							
ytick={0.65,0.7,0.75,0.8, 0.83},						
legend style={legend pos=north west, legend cell align=left, fill=none, draw=none, font={\fontsize{8pt}{12pt}\selectfont}}
]
\addplot[blue,mark=o,mark size=2.5pt]		table[x=dofs, y=effind]{\one};
\addplot[red,,mark=square,mark size=2.5pt]	table[x=dofs, y=effind]{\two};
\legend{
{{\bf Algorithm~\ref{algorithm}.\ref{marking:A}} ($\thetaX=0.8$, $\thetaP=0.8$)},
{{\bf Algorithm~\ref{algorithm}.\ref{marking:B}} ($\thetaX=0.7$, $\thetaP=0.9$)},
}
\end{semilogxaxis}
\end{tikzpicture}
\hfill
\begin{tikzpicture}
\pgfplotstableread{data/lshaped/criterionC/varth1_thX0.7_thP0.5.dat}{\three}
\pgfplotstableread{data/lshaped/criterionD/varth1_thX0.7_thP0.5.dat}{\four}
\begin{semilogxaxis}
[
width = 8.1cm, height=6cm,								
xlabel={degree of freedom, $N_\ell$}, 					
ylabel={effectivity index, $\zeta_\ell$},		
ymajorgrids=true, xmajorgrids=true, grid style=dashed,	
xmin=(2)*10^(1), xmax=(8)*10^(5),						
ymin = 0.69,	 ymax = 0.85,							
ytick={0.65,0.7,0.75,0.8, 0.83},						
legend style={legend pos=north west, legend cell align=left, fill=none, draw=none, font={\fontsize{8pt}{12pt}\selectfont}}
]
\addplot[myOrange,mark=triangle,mark size=3.5pt]	table[x=dofs, y=effind]{\three};
\addplot[teal,mark=diamond,mark size=3.5pt]			table[x=dofs, y=effind]{\four};
\legend{
{{\bf Algorithm~\ref{algorithm}.\ref{marking:C}} ($\thetaX=0.7$, $\thetaP=0.5$)},
{{\bf Algorithm~\ref{algorithm}.\ref{marking:D}} ($\thetaX=0.7$, $\thetaP=0.5$)}
}
\end{semilogxaxis}
\end{tikzpicture}
\caption{
The effectivity indices $\zeta_\ell$ for the sGFEM solutions at each iteration of
Algorithms~\ref{algorithm}.\ref{marking:A}--\ref{algorithm}.\ref{marking:B} (left) and 
Algorithms~\ref{algorithm}.\ref{marking:C}--\ref{algorithm}.\ref{marking:D} (right)
with the marking parameters $\thetaX,\,\thetaP \in \Theta$ that yield smallest cost (see Table~\ref{tab:best:cost}).
}
\label{Exp1:data:4}
\end{figure}
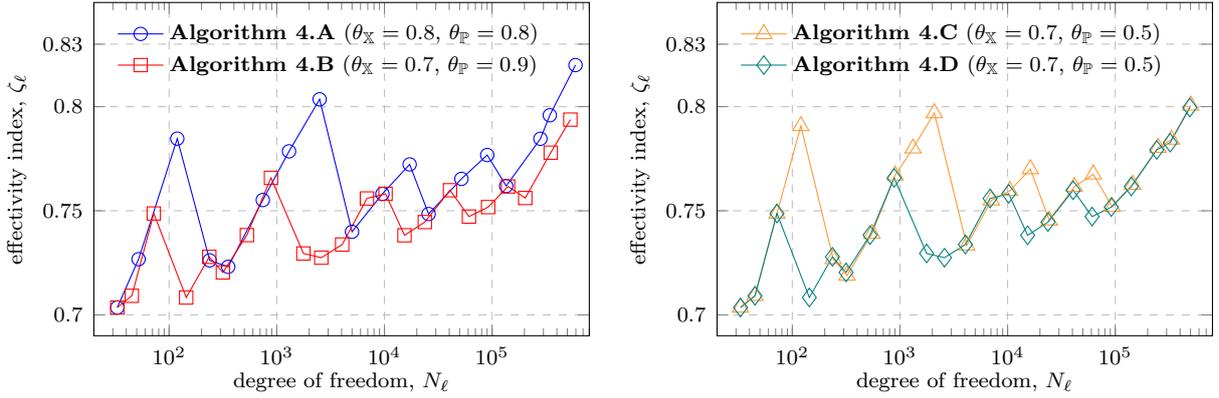

Figure~\ref{Exp1:data:4} shows the effectivity indices $\zeta_\ell$ obtained for 
Algorithms~\ref{algorithm}.\ref{marking:A}--\ref{algorithm}.\ref{marking:B} (left) and 
Algorithms~\ref{algorithm}.\ref{marking:C}--\ref{algorithm}.\ref{marking:D} (right) 
with the pairs of parameters $(\thetaX,\thetaP)$ for which the smallest cost is attained.
We observe that in all cases the error is slightly underestimated, 
as the effectivity indices vary in a range between $0.7$ and $0.82$ throughout all iterations.

\section{Proof of Theorem~\ref{thm:plain_convergence} (plain convergence)} \label{section:plain_convergence}

We start with stating three propositions which address convergence of either
the spatial component or the parametric component of the
error estimate given by~\eqref{eq:overall:err:estimate}.
To ease the readability, the proofs of propositions are postponed to Section~\ref{sec:auxiliary}.

The first proposition proves that each parametric error indicator converges
to some limiting error indicator.

\begin{proposition}\label{prop:conv:parametric:new}
For $\nu \in \QQQ_\ell$, let $\est_\ell(\nu) \ge 0$ be the parametric error indicator from~\eqref{eq:parametric-error-estimate}.
For $\nu \in \III \setminus \QQQ_\ell$, define $\est_\ell(\nu) := 0$.
Then, for each $\nu \in \III$, there exists $\est_\infty(\nu) \ge 0$ such that
\begin{equation}\label{eq:prop:conv:parametric:new}
 \sum_{\nu \in \III} \est_\infty(\nu)^2 < \infty
 \quad \text{and} \quad
 \sum_{\nu \in \III} |\est_\infty(\nu) - \est_\ell(\nu)|^2 \to 0
 \quad \text{as } \ell \to \infty.
\end{equation}
\end{proposition}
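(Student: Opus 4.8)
The plan is to identify each parametric indicator $\est_\ell(\nu)$ as a single coefficient of an $\ell$-indexed ``residual'' element of the full Bochner space $\V$, and then to promote the resulting coefficientwise convergence to convergence in $\ell^2(\III)$ by a uniform-tail argument. First I would record the two limiting objects produced by the nested discrete spaces. Since $\V_\ell \subseteq \V_{\ell+1}$, set $\V_\infty := \overline{\bigcup_\ell \V_\ell}$ and let $u_\infty \in \V_\infty$ solve \eqref{eq:weakform} on $\V_\infty$; the standard nested-Galerkin argument (C\'ea plus density) gives $\enorm{u_\infty - u_\ell}{} = \min_{v \in \V_\ell}\enorm{u_\infty - v}{} \to 0$. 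Likewise, with $\X_\infty := \overline{\bigcup_\ell \X_\ell}$, the $a_0$-orthogonal (``elliptic'') projections $\Pi_\ell \colon \X \to \X_\ell$, which are contractions in the energy seminorm $v \mapsto \norm{a_0^{1/2}\nabla v}{L^2(D)}$, converge strongly to $\Pi_\infty \colon \X \to \X_\infty$.

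Next I would introduce, for each $\ell$, the element $r_\ell \in \V$ defined through the Riesz theorem (using ellipticity of $B_0$ on $\V$, cf.\ \eqref{eq:lambda}) by $B_0(r_\ell, v) = F(v) - B(u_\ell, v) = B(u - u_\ell, v)$ for all $v \in \V$. Testing $B_0(r_\ell - r_\infty, v) = B(u_\infty - u_\ell, v)$ with $v = r_\ell - r_\infty$ and using \eqref{eq:lambda} yields $\enorm{r_\ell - r_\infty}{0} \le \lambda^{-1/2} \enorm{u_\infty - u_\ell}{} \to 0$. The crucial algebraic observation is that, applying the diagonal identity \eqref{eq1:lemma:orthogonal} to the definition \eqref{eq:def:hat-e-ell-nu} of $e_\ell^\nu$ and to $F(v_\ell P_\nu) - B(u_\ell, v_\ell P_\nu) = B(u - u_\ell, v_\ell P_\nu) = B_0(r_\ell, v_\ell P_\nu)$, one obtains $e_\ell^\nu = \Pi_\ell (r_\ell)_\nu$ for every $\nu \in \QQQ_\ell$, where $(r_\ell)_\nu \in \X$ is the $\nu$-th coefficient of $r_\ell$ in the sense of \eqref{eq:representation}. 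Hence $\est_\ell(\nu) = \norm{a_0^{1/2}\nabla \Pi_\ell (r_\ell)_\nu}{L^2(D)}$, and by \eqref{eq2:lemma:orthogonal} and contractivity of $\Pi_\ell$ this already gives the uniform bound $\est_\ell(\QQQ_\ell)^2 \le \enorm{r_\ell}{0}^2 \le \lambda^{-1}\enorm{u - u_0}{}^2$.

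Then I would fix $\nu \in \III$. Since $\PPP_\ell$ is increasing and $M_{\PPP_\ell}$ nondecreasing, definition \eqref{def:Q} forces a dichotomy: either $\nu \notin \QQQ_\ell$ (so $\est_\ell(\nu) = 0$) for all large $\ell$, in which case I set $\est_\infty(\nu) := 0$; or $\nu \in \QQQ_\ell$ for all large $\ell$, in which case I set $\est_\infty(\nu) := \norm{a_0^{1/2}\nabla \Pi_\infty (r_\infty)_\nu}{L^2(D)}$. In the second case, using $(r_\ell)_\nu \to (r_\infty)_\nu$ in $\X$ (a consequence of $\enorm{r_\ell - r_\infty}{0} \to 0$ and \eqref{eq2:lemma:orthogonal}), the strong convergence $\Pi_\ell \to \Pi_\infty$, and contractivity, the splitting $\norm{a_0^{1/2}\nabla(\Pi_\ell(r_\ell)_\nu - \Pi_\infty(r_\infty)_\nu)}{L^2(D)} \le \norm{a_0^{1/2}\nabla((r_\ell)_\nu - (r_\infty)_\nu)}{L^2(D)} + \norm{a_0^{1/2}\nabla(\Pi_\ell - \Pi_\infty)(r_\infty)_\nu}{L^2(D)} \to 0$ gives $\est_\ell(\nu) \to \est_\infty(\nu)$; in the first case this is trivial.

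Finally I would upgrade to $\ell^2(\III)$. Fixing an enumeration $\III = \{\nu_k\}_{k \in \N}$ and letting $\Pi^{(N)}$ be the $B_0$-orthogonal projection of $\V$ onto $\X \otimes \hull\{P_{\nu_1}, \dots, P_{\nu_N}\}$, so that $\enorm{v - \Pi^{(N)} v}{0}^2 = \sum_{k > N}\norm{a_0^{1/2}\nabla v_{\nu_k}}{L^2(D)}^2$ by \eqref{eq2:lemma:orthogonal}, the identification of the previous step (together with $\est_\ell(\nu_k) = 0$ for $\nu_k \notin \QQQ_\ell$ and contractivity of $\Pi_\ell$) gives $\sum_{k > N} \est_\ell(\nu_k)^2 \le \enorm{r_\ell - \Pi^{(N)} r_\ell}{0}^2$. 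Because $r_\ell \to r_\infty$ in $(\V, \enorm{\cdot}{0})$ and $\enorm{r_\infty - \Pi^{(N)} r_\infty}{0} \to 0$ as $N \to \infty$, a three-term splitting (handling the finitely many small $\ell$ separately) yields $\lim_{N \to \infty} \sup_\ell \sum_{k > N}\est_\ell(\nu_k)^2 = 0$; by Fatou this also gives $\sum_{\nu \in \III}\est_\infty(\nu)^2 < \infty$, and splitting $\sum_k |\est_\infty(\nu_k) - \est_\ell(\nu_k)|^2$ into a head (which tends to $0$ by the previous step) and a uniformly small tail proves the second assertion of \eqref{eq:prop:conv:parametric:new}. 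I expect this last step to be the real obstacle: coefficientwise convergence plus the uniform norm bound is not enough for $\ell^2(\III)$-convergence (mass could leak to infinity in the multi-index set), and what rescues it is the simultaneous control of all indicators by the tail of the single residual $r_\ell$, which converges in the full Bochner space.
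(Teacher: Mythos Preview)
Your argument is correct, and the dichotomy you invoke for each $\nu\in\III$ is indeed valid (once $\nu\in\QQQ_{\ell_0}$ and $\nu$ never enters $\PPP_\ell$, the inclusion $\QQQ_\ell\setminus\PPP_{\ell+1}\subseteq\QQQ_{\ell+1}$ keeps it in $\QQQ_\ell$ forever); the identification $e_\ell^\nu=\Pi_\ell(r_\ell)_\nu$ and the uniform-tail upgrade are also sound.

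The paper, however, takes a shorter route that avoids both the coefficientwise case analysis and the tail argument. Instead of lifting to a residual $r_\ell$ in the \emph{full} space $\V$ and then projecting back, it packages all the $e_\ell^\nu$ into the single element $\widehat e_\ell':=\sum_{\nu\in\III}e_\ell^\nu P_\nu\in\widehat\V_\ell'=\X_\ell\otimes\widehat\P_\ell$ and observes that $\widehat e_\ell'$ is exactly the Galerkin solution in $\widehat\V_\ell'$ of $B_0(\widehat e_\ell',\cdot)=F(\cdot)-B(u_\ell,\cdot)$. Introducing the auxiliary $\widehat e_\ell''\in\widehat\V_\ell'$ solving the same problem with $u_\ell$ replaced by $u_\infty$, one has $\enorm{\widehat e_\ell''-\widehat e_\ell'}{0}\lesssim\enorm{u_\infty-u_\ell}{}\to0$, while $\widehat e_\ell''\to\widehat e_\infty'$ follows from a second application of the same nested-Galerkin \emph{a~priori} convergence lemma (now applied to the nested spaces $\widehat\V_\ell'$). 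Since $\enorm{\widehat e_\infty'-\widehat e_\ell'}{0}^2=\sum_\nu\enorm{(e_\infty^\nu-e_\ell^\nu)P_\nu}{0}^2$ by \eqref{eq2:lemma:orthogonal}, the reverse triangle inequality immediately gives $\sum_\nu|\est_\infty(\nu)-\est_\ell(\nu)|^2\to0$ with $\est_\infty(\nu):=\enorm{e_\infty^\nu P_\nu}{0}$. So the paper gets the $\ell^2(\III)$ convergence ``for free'' from one Bochner-space limit, where you have to manufacture it from pointwise convergence plus uniform tail control. Your approach, in exchange, makes the structure $e_\ell^\nu=\Pi_\ell(r_\ell)_\nu$ very explicit and uses only one limit ($r_\ell\to r_\infty$) rather than two nested-Galerkin arguments.
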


The second proposition states that the parametric enrichment satisfying a certain weak marking
criterion along a subsequence guarantees convergence of the whole sequence of parametric error estimates.

\begin{proposition}\label{prop:conv:parametric}
Let $g_\P:\R_{\ge0} \to \R_{\ge0}$ be a continuous function with $g_\P(0) = 0$.
Suppose that Algorithm~\ref{algorithm} yields
a subsequence $(\PPP_{\ell_k})_{k\in \N_0} \subset (\PPP_{\ell})_{\ell \in \N_0}$
satisfying the following property:
\begin{equation}\label{eq1:prop:conv:parametric}
\est_{\ell_k}(\mu) \le g_\P \big( \est_{\ell_k}(\MMM_{\ell_k}) \big)\quad
\text{for all } k \in \N_0
\text{ and all } \mu \in \QQQ_{\ell_k} \setminus \MMM_{\ell_k},
\end{equation}
i.e., the non-marked multi-indices are controlled by the marked ones.
Then, the sequence of parametric error estimates converges to zero, i.e., $\est_\ell(\QQQ_\ell) \to 0$ as $\ell \to \infty$.
\end{proposition}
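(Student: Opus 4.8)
The plan is to reduce everything to Proposition~\ref{prop:conv:parametric:new}. That result supplies, for every $\nu \in \III$, a limit $\est_\infty(\nu) \ge 0$ with
\begin{equation*}
 \sum_{\nu \in \III} \est_\infty(\nu)^2 < \infty
 \quad\text{and}\quad
 \sum_{\nu \in \III} |\est_\infty(\nu) - \est_\ell(\nu)|^2 \to 0
 \quad\text{as } \ell \to \infty.
\end{equation*}
Recalling the convention $\est_\ell(\nu) = 0$ for $\nu \in \III \setminus \QQQ_\ell$, one has $\est_\ell(\QQQ_\ell)^2 = \sum_{\nu \in \III} \est_\ell(\nu)^2$, so it suffices to show that $\est_\infty(\nu) = 0$ for every $\nu \in \III$; indeed, this gives $\est_\ell(\QQQ_\ell)^2 = \sum_{\nu \in \III} |\est_\ell(\nu) - \est_\infty(\nu)|^2 \to 0$ directly from the $\ell^2$-convergence above.

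First I would record that any multi-index which is marked at some iteration has vanishing limit indicator. If $\nu \in \MMM_m$ for some $m$, then $\nu \in \PPP_{m+1}$ and, by the nestedness $\PPP_{m+1} \subseteq \PPP_{m'}$ (for $m' \ge m+1$) built into Algorithm~\ref{algorithm}, one gets $\nu \notin \QQQ_{m'}$ by~\eqref{def:Q}, hence $\est_{m'}(\nu) = 0$ for all $m' \ge m+1$, and therefore $\est_\infty(\nu) = 0$. Next, I would use this together with the $\ell^2$-convergence to show that the marked indicators vanish along the subsequence: since $\est_\infty(\nu) = 0$ for each $\nu \in \MMM_{\ell_k}$,
\begin{align*}
 \est_{\ell_k}(\MMM_{\ell_k})^2
 &= \sum_{\nu \in \MMM_{\ell_k}} \est_{\ell_k}(\nu)^2
 = \sum_{\nu \in \MMM_{\ell_k}} |\est_{\ell_k}(\nu) - \est_\infty(\nu)|^2 \\
 &\le \sum_{\nu \in \III} |\est_{\ell_k}(\nu) - \est_\infty(\nu)|^2 \to 0
 \quad\text{as } k \to \infty.
\end{align*}
By continuity of $g_\P$ with $g_\P(0) = 0$, this yields $g_\P\big(\est_{\ell_k}(\MMM_{\ell_k})\big) \to 0$ as $k \to \infty$.

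It then remains to treat a multi-index $\nu \in \III$ that is never marked. For every $k$, either $\nu \in \QQQ_{\ell_k}$, and then (being unmarked) $\nu \in \QQQ_{\ell_k} \setminus \MMM_{\ell_k}$, so the weak marking hypothesis~\eqref{eq1:prop:conv:parametric} gives $\est_{\ell_k}(\nu) \le g_\P\big(\est_{\ell_k}(\MMM_{\ell_k})\big)$; or $\nu \notin \QQQ_{\ell_k}$, and then $\est_{\ell_k}(\nu) = 0 \le g_\P\big(\est_{\ell_k}(\MMM_{\ell_k})\big)$ since $g_\P \ge 0$. Hence $\est_{\ell_k}(\nu) \le g_\P\big(\est_{\ell_k}(\MMM_{\ell_k})\big)$ holds for all $k$, and passing to the limit (using the convergence $\est_{\ell_k}(\nu) \to \est_\infty(\nu)$ of the subsequence) gives $\est_\infty(\nu) = 0$. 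Combined with the marked case, this proves $\est_\infty(\nu) = 0$ for all $\nu \in \III$, which finishes the argument.

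The point that needs care — and the reason Proposition~\ref{prop:conv:parametric:new} is invoked rather than a direct estimate — is that the non-marked part $\est_{\ell_k}(\QQQ_{\ell_k} \setminus \MMM_{\ell_k})$ cannot be bounded simply as (number of indices)~$\times$~(largest non-marked indicator)$^2$, since $\#\QQQ_{\ell_k}$ is unbounded as $k \to \infty$. The $\ell^2$-type convergence of the indicators is exactly what circumvents this, by letting us pass to the pointwise statement $\est_\infty(\nu) = 0$, whose proof relies only on the bookkeeping that marked indices leave the detail index set permanently.
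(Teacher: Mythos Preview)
Your proof is correct, and it takes a genuinely simpler route than the paper. Both start from Proposition~\ref{prop:conv:parametric:new}, but from there the arguments diverge. The paper invokes a separate abstract result (Lemma~\ref{lemma:parametric}) about square-summable sequences indexed over nested subsets of $\N$, which in turn requires a somewhat delicate three-step argument (showing $\min(\PP_{k+1}\setminus\PP_k)\to\infty$, a partition of $\N\setminus\PP_k$ into four pieces, etc.) to conclude that $\sum_{\nu\in\III\setminus\PPP_{\ell_k}}\est_\infty(\nu)^2\to 0$; only afterwards does it bound $\est_\ell(\QQQ_\ell)$. You instead go straight for the pointwise statement $\est_\infty(\nu)=0$ for every $\nu$, via a clean dichotomy: a marked index permanently enters $\PPP_\bullet$ and hence leaves $\QQQ_\bullet$, forcing $\est_\infty(\nu)=0$; this immediately yields $\est_{\ell_k}(\MMM_{\ell_k})\to 0$ from the $\ell^2$-convergence, and the weak marking hypothesis then handles every never-marked index pointwise. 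The payoff is that you bypass Lemma~\ref{lemma:parametric} entirely. It is worth noting that your pointwise strategy is essentially the one the paper itself uses later, in Step~3 of the proof of Theorem~\ref{thm:plain_convergence}, to handle Criterion~\ref{marking:D}; you have recognised that the same idea already suffices here.
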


The third proposition addresses convergence of spatial error estimates.
Unlike in Proposition~\ref{prop:conv:parametric} for parametric estimates,
the convergence here is only shown along the subsequence for which spatial refinement takes place.

\begin{proposition}\label{prop:conv:spatial}
Let $g_\X:\R_{\ge0} \to \R_{\ge0}$ be a continuous function with $g_\X(0) = 0$.
Suppose that Algorithm~\ref{algorithm} yields a subsequence
$(\TT_{\ell_k})_{k \in \N_0} \subset (\TT_\ell)_{\ell \in \N_0}$ satisfying the following property:
\begin{equation}\label{eq1:prop:conv:spatial}
 \est_{\ell_k}(z) \le g_\X \big( \est_{\ell_k}(\MM_{\ell_k}) \big)\quad
 \text{for all } k \in \N_0
 \text{ and all } z \in \NN_{\ell_k}^+ \setminus \MM_{\ell_k},
\end{equation} 
i.e., the non-marked vertices are controlled by the marked ones.
Then, the corresponding subsequence of spatial error estimates converges to zero,
i.e., $\est_{\ell_k}(\NN_{\ell_k}^+) \to 0$ as $k \to \infty$.
\end{proposition}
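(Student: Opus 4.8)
The strategy combines an \emph{a~priori convergence} argument with the structure of NVB refinement and the weak marking hypothesis~\eqref{eq1:prop:conv:spatial}. First, since the spaces $\V_\ell$ are nested and conforming, the Galerkin approximations $u_\ell$ converge to the Galerkin approximation $u_\infty$ of~\eqref{eq:weakform} in the Hilbert space $\V_\infty:=\overline{\bigcup_{\ell\in\N_0}\V_\ell}$, and Galerkin orthogonality (valid since $\V_\ell\subset\V_{\ell+1}$) gives the Pythagorean identity $\enorm{u_\infty-u_\ell}{}^2=\enorm{u_\infty-u_{\ell+1}}{}^2+\enorm{u_{\ell+1}-u_\ell}{}^2$; telescoping this yields $\sum_{\ell\in\N_0}\enorm{u_{\ell+1}-u_\ell}{}^2=\enorm{u_\infty-u_0}{}^2<\infty$, whence $\enorm{u_{\ell+1}-u_\ell}{}\to 0$ as $\ell\to\infty$. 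I would also record that the residuals $R_\ell(v):=F(v)-B(u_\ell,v)$ converge to $R_\infty(v):=F(v)-B(u_\infty,v)$ in the dual norm of $\V$ and that $R_\infty$ vanishes on $\V_\infty$.

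Next I would control the marked contribution. Along the subsequence we may assume that mesh refinement is performed (otherwise~\eqref{eq1:prop:conv:spatial} forces $\est_{\ell_k}(\NN_{\ell_k}^+)=0$), so that $\PPP_{\ell_k+1}=\PPP_{\ell_k}$. For $d=2$, NVB yields $\widehat\varphi_{\ell_k,z}=\varphi_{\ell_k+1,z}\in\X_{\ell_k+1}$ for every $z\in\MM_{\ell_k}\subseteq\NN_{\ell_k}^+\cap\NN_{\ell_k+1}$, hence $\widehat\varphi_{\ell_k,z}P_\nu\in\V_{\ell_k+1}$ for all $\nu\in\PPP_{\ell_k}$, and Galerkin orthogonality gives $R_{\ell_k}(\widehat\varphi_{\ell_k,z}P_\nu)=B(u_{\ell_k+1}-u_{\ell_k},\widehat\varphi_{\ell_k,z}P_\nu)$. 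Inserting this into~\eqref{eq:spatial-error-estimate} and using the stable subspace decomposition behind Theorem~\ref{thm:estimator} --- which for $d=2$ is precisely Corollary~\ref{cor:estimator} with $\MMM_{\ell_k}=\emptyset$ --- gives
\begin{equation*}
 \est_{\ell_k}(\MM_{\ell_k})^2\le\est_{\ell_k}(\NN_{\ell_k}^+\cap\NN_{\ell_k+1})^2\le\tfrac{K}{\lambda}\,\enorm{u_{\ell_k+1}-u_{\ell_k}}{}^2\to 0\qquad(k\to\infty),
\end{equation*}
and the analogous bound holds for $d=3$ by the bisection structure of 3D NVB. Combining this with~\eqref{eq1:prop:conv:spatial} and the continuity of $g_\X$ with $g_\X(0)=0$, every non-marked indicator is then controlled uniformly, i.e., $\max_{z\in\NN_{\ell_k}^+\setminus\MM_{\ell_k}}\est_{\ell_k}(z)\le g_\X\bigl(\est_{\ell_k}(\MM_{\ell_k})\bigr)\to 0$.

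The remaining --- and genuinely delicate --- step is to pass from ``each indicator is small'' to ``$\est_{\ell_k}(\NN_{\ell_k}^+)^2=\sum_{z\in\NN_{\ell_k}^+}\est_{\ell_k}(z)^2$ is small'', which is not automatic because $\#\NN_{\ell_k}^+\to\infty$ and the functions $\widehat\varphi_{\ell_k,z}$ themselves evolve with $k$. Here I would argue by contradiction, in the spirit of~\cite{msv08}: assuming $\est_{\ell_k}(\NN_{\ell_k}^+)\not\to 0$ along a sub-subsequence, I split $\NN_{\ell_k}^+$ into vertices whose surrounding mesh eventually stabilises (``frozen'' vertices) and vertices that are refined at a later iteration. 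The contribution of the latter is governed, via the argument of the second step applied at the iteration where they are refined, by later solution increments, and hence its $\ell^2$-sum vanishes; for the frozen vertices, the weak marking~\eqref{eq1:prop:conv:spatial} forces $\est_{\ell_k}(z)\to 0$, so that the limiting residual $R_\infty$ annihilates the limiting frozen bubbles, and then a~priori convergence of $u_{\ell_k}$ together with the uniform boundedness of the two-level corrector shows that the corresponding partial sum also tends to zero. Summing the two contributions contradicts the assumption, completing the proof. I expect the careful bookkeeping of frozen versus later-refined vertices --- in particular tracking the shrinking supports of the $\widehat\varphi_{\ell_k,z}$ --- and the uniform control of the frozen partial sums to be the hardest part.
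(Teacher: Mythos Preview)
Your overall architecture matches the paper's (both follow~\cite{msv08}), and your Step~2 bound $\est_{\ell_k}(\MM_{\ell_k})^2\le\tfrac{K}{\lambda}\,\enorm{u_{\ell_k+1}-u_{\ell_k}}{}^2\to 0$ via Corollary~\ref{cor:estimator} is, for $d=2$, a clean alternative route to the intermediate conclusion $\est_{\ell_k}(\MM_{\ell_k})\to 0$ (the $d=3$ claim is unsubstantiated: Corollary~\ref{cor:estimator} rests on $\varphi_{\circ,z}=\widehat\varphi_{\bullet,z}$, which fails for 3D NVB). The genuine gap is in the final step. For the ``later-refined'' vertices, the indicator $\est_{\ell_k}(z)$ is computed with $u_{\ell_k}$ and is \emph{not} controlled by the increment at the later step at which $z$ actually enters the mesh, so your Step~2 argument does not transfer. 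For the ``frozen'' vertices you correctly obtain pointwise convergence $\est_{\ell_k}(z)\to 0$, but ``uniform boundedness of the two-level corrector'' only bounds the \emph{total} sum $\sum_z\est_{\ell_k}(z)^2$; it is not a termwise majorant, so you have nothing to feed into dominated convergence over the (countably infinite) frozen region.

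The missing ingredient is \emph{local discrete efficiency}. The paper establishes (Lemma~\ref{lem:prop:spatial}) that $\est_\ell(z)\le C\,\enorm{u-u_\ell}{\omega_\ell(z)}$ for every $z\in\NN_\ell^+$, and the sharper bound $\est_\ell(z)\le C\,\enorm{u_\infty-u_\ell}{\omega_\ell(z)}$ whenever $\widehat\varphi_{\ell,z}\in\X_\infty$; this is the parametric analogue of~\cite[eq.~(2.9b)]{msv08} and is what drives the whole summation argument. With it, the paper partitions $\TT_\ell$ into \emph{three} (not two) classes: on $\TT_\ell^{\rm good}$ (elements whose detail functions lie in $\X_\infty$, roughly your ``later-refined'' set) the sharper bound sums to $\enorm{u_\infty-u_\ell}{}^2\to 0$; on $\TT_\ell^{\rm neither}$ the total patch measure tends to zero, and absolute continuity of $\omega\mapsto\enorm{u-u_\ell}{\omega}^2$ disposes of the sum --- this intermediate class is absent from your dichotomy; on $\TT_\ell^{\rm bad}$ (your ``frozen'' set, contained in the fixed limiting mesh $\TT_\infty$) the local bound supplies the summable majorant needed for Lebesgue dominated convergence once the pointwise decay from weak marking is in hand.
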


\begin{remark}\label{remark:marking}
The marking strategies employed in Criteria~\ref{marking:A}--\ref{marking:D}, i.e., the D\"orfler marking 
strategy and the maximum criterion, satisfy the properties~\eqref{eq1:prop:conv:parametric}--\eqref{eq1:prop:conv:spatial} assumed in Propositions~\ref{prop:conv:parametric}--\ref{prop:conv:spatial}.
For example, let us show that~\eqref{eq1:prop:conv:parametric} holds for parametric error indicators
(the same arguments will apply to spatial error indicators).
Suppose that the $\ell_k$-th step of the adaptive algorithm employs the maximum criterion, i.e.,
\begin{equation*}
 \MMM_{\ell_k} := \{ \mu \in \QQQ_{\ell_k} : \est_{\ell_k}(\mu) \ge (1-\thetaP) \, \max_{\nu \in \QQQ_{\ell_k}} \est_{\ell_k}(\nu) \}.
\end{equation*}
Then, for $\mu \in \QQQ_{\ell_k} \setminus \MMM_{\ell_k}$, there holds
\begin{equation*}
 \est_{\ell_k}(\mu) < (1-\thetaP) \, \max_{\nu \in \QQQ_{\ell_k}} \est_{\ell_k}(\nu)
 \le (1-\thetaP) \, \est_{\ell_k}(\MMM_{\ell_k}),
\end{equation*}
which is~\eqref{eq1:prop:conv:parametric} with $g_\P(s) := (1-\thetaP) s$.

Similarly, suppose that the $\ell_k$-th step of the algorithm employs D\"orfler marking,~i.e.,
\begin{equation*}
 \MMM_{\ell_k} \subseteq \QQQ_{\ell_k} 
 \text{ satisfies }\,
 \thetaP \, \est_{\ell_k}(\QQQ_{\ell_k}) \le \est_{\ell_k}(\MMM_{\ell_k}).
\end{equation*}
Then, for $\mu \in \QQQ_{\ell_k} \setminus \MMM_{\ell_k}$, one has
\begin{equation*}
 \est_{\ell_k}(\mu) \le \est_{\ell_k}(\QQQ_{\ell_k} \setminus \MMM_{\ell_k}) =
 \big( \est_{\ell_k}(\QQQ_{\ell_k})^2 - \est_{\ell_k}(\MMM_{\ell_k})^2 \big)^{1/2} \le
 (1 - \thetaP^{2})^{1/2} \, \thetaP^{-1} \, \est_{\ell_k}(\MMM_{\ell_k}),
\end{equation*}
which is~\eqref{eq1:prop:conv:parametric} with $g_\P(s) := (1 - \thetaP^{2})^{1/2} \, \thetaP^{-1} \, s$.
\end{remark}

With the aforegoing propositions, we can proceed to the proof of our first main result.

\begin{proof}[Proof of Theorem~\ref{thm:plain_convergence}]
We divide the proof into three steps.

{\bf Step~1.} Consider Algorithms~\ref{algorithm}.\ref{marking:A} and~\ref{algorithm}.\ref{marking:C}.
If case~(a) in the corresponding marking strategies occurs only finitely many times,
then there exists $\ell_0 \in \N$ such that case~(b) (i.e., parametric enrichment) occurs for all $\ell \ge \ell_0$.
Then, according to the criterion used to decide on the type of enrichment, one has
$0 \le \est_\ell(\NN_\ell^+) < \vartheta \, \est_\ell(\QQQ_\ell)$ for all $\ell \ge \ell_0$.
Since $\est_\ell(\QQQ_\ell) \to 0$ as $\ell \to \infty$ by Proposition~\ref{prop:conv:parametric},
we conclude that $\est_\ell \to 0$ as $\ell \to \infty$.

If case~(b) in Marking criteria~\ref{marking:A} and~\ref{marking:C} occurs finitely many times,
then there exists $\ell_0 \in \N$ such that only case~(a) (i.e., spatial refinement) occurs for all $\ell \ge \ell_0$.
Hence, $0 \le \vartheta \, \est_\ell(\QQQ_\ell) \le \est_\ell(\NN_\ell^+)$ for all $\ell \ge \ell_0$.
Since $\est_\ell(\NN_\ell^+) \to 0$ as $\ell \to \infty$ by Proposition~\ref{prop:conv:spatial},
we conclude that $\est_\ell \to 0$ as $\ell \to \infty$.

Finally, if both cases~(a) and~(b) happen infinitely often, we split the sequence $(\est_\ell)_{\ell \in \N_0}$ into two disjoint subsequences:
$(\est_{\ell_k^{\rm(a)}})_{k \in \N}$, where only case~(a) occurs, and $(\est_{\ell_k^{\rm(b)}})_{k \in \N}$, where only case~(b) occurs.
With the preceding argument, it follows that $\est_{\ell_k^{\rm(a)}}, \est_{\ell_k^{\rm(b)}} \to 0$ as $k \to \infty$.
This implies the convergence of the sequence $\est_\ell \to 0$ as $\ell \to \infty$.

{\bf Step~2.} Let us now consider Algorithm~\ref{algorithm}.\ref{marking:B}.
We argue as in Step~1.
If case~(a) in Marking criterion~\ref{marking:B} occurs only finitely many times,
then there exists $\ell_0 \in \N$ such that case~(b) (i.e., parametric enrichment) occurs for all $\ell \ge \ell_0$.
Then, according to the criterion used to decide on the type of enrichment, one has
\begin{equation*}
 0 \le \thetaX \, \est_\ell(\NN_\ell^+) 
 \le \est_\ell(\widetilde\MM_\ell)
 \le \est_\ell(\widetilde\RR_\ell)
 < \vartheta \, \est_\ell(\MMM_\ell) 
 \le \vartheta \, \est_\ell(\QQQ_\ell)\quad
 \hbox{for all $\ell \ge \ell_0$}.
\end{equation*}
Since $\est_\ell(\QQQ_\ell) \to 0$ as $\ell \to \infty$ by Proposition~\ref{prop:conv:parametric}, we conclude that $\est_\ell \to 0$ as $\ell \to \infty$.

If case~(b) in Marking criterion~\ref{marking:B} occurs finitely many times,
then there exists $\ell_0 \in \N$ such that only case~(a) (i.e., spatial refinement) occurs for all $\ell \ge \ell_0$ and hence
\begin{equation*}
 0 \le \thetaP \, \est_\ell(\QQQ_\ell)
 \le \est_\ell(\widetilde\MMM_\ell)
 < \vartheta^{-1} \, \est_\ell(\widetilde\RR_\ell)
 \le \vartheta^{-1} \, \est_\ell(\NN_\ell^+)\quad
 \hbox{for all $\ell \ge \ell_0$}.
\end{equation*}
Since $\est_\ell(\NN_\ell^+) \to 0$ as $\ell \to \infty$ by Proposition~\ref{prop:conv:spatial},
we conclude that $\est_\ell \to 0$ as $\ell \to \infty$.

If both cases (a) and (b) occur infinitely often, then we proceed as in Step~1 to show that $\est_\ell \to 0$ as $\ell \to \infty$.

{\bf Step~3.} Finally, consider Algorithm~\ref{algorithm}.\ref{marking:D}.
Arguing as for Algorithm~\ref{algorithm}.\ref{marking:B} in Step~2, we prove that
\begin{equation}\label{eq2:prop:conv:parametric:new}
 \est_\ell(\NN_\ell^+) \to 0 
 \quad \text{as well as} \quad
 \est_\ell(\widetilde\MMM_\ell) \to 0
 \quad \text{as } \ell \to \infty. 
\end{equation}
It remains to show that $\est_\ell(\QQQ_\ell) \to 0$ as $\ell \to \infty$.
By Proposition~\ref{prop:conv:parametric:new}, there exists a sequence $(\est_\infty(\nu))_{\nu \in \III}$ satisfying~\eqref{eq:prop:conv:parametric:new}.
In particular, $\sup_{\nu \in \III} \est_\infty(\nu) < \infty$.
Let $\eps >0$ and choose $\mu \in \III$ such that
\begin{equation*}
 \sup_{\nu \in \III} \est_\infty(\nu) \le \est_\infty(\mu) + \eps.
\end{equation*}
Together with~\eqref{eq:prop:conv:parametric:new} and~\eqref{eq2:prop:conv:parametric:new},
the triangle inequality yields that
\begin{equation*}
 0 \le \est_\infty(\mu) + \eps 
 \le |\est_\infty(\mu) - \est_\ell(\mu)| + \est_\ell(\mu) + \eps
 \le |\est_\infty(\mu) - \est_\ell(\mu)| + \est_\ell(\widetilde\MMM_\ell) + \eps
 \xrightarrow{\ell \to \infty} \eps.
\end{equation*}
Since $\eps > 0$ is arbitrary, we conclude that $\est_\infty(\nu) = 0$ for all $\nu \in \III$.
With~\eqref{eq:prop:conv:parametric:new},
this proves that $\est_\ell(\QQQ_\ell)^2 = \sum_{\nu \in \III} \est_\ell(\nu)^2 \to 0$ as $\ell \to \infty$.
\end{proof}

\section{Proof of Propositions~\ref{prop:conv:parametric:new},~\ref{prop:conv:parametric}, and~\ref{prop:conv:spatial}}
\label{sec:auxiliary}

In this section, we collect some auxiliary results and prove 
Propositions~\ref{prop:conv:parametric:new},~\ref{prop:conv:parametric}, and~\ref{prop:conv:spatial}.

\subsection{\textsl{A~priori} convergence of adaptive algorithms}
The following lemma is an early result from~\cite{bv84} which proves that adaptive algorithms
(without coarsening) always lead to convergence of the discrete solutions.

\begin{lemma}[\textsl{a~priori} convergence]\label{lemma:apriori}
Let $V$ be a Hilbert space. Let $a: V \times V \to \R$ be an elliptic and continuous bilinear form.
Let $F \in V^*$ be a linear and continuous functional.
For each $\ell \in \N_0$, let $V_\ell \subseteq V$ be a closed subspace such that $V_\ell \subseteq V_{\ell+1}$.
Furthermore, define the limiting space $V_\infty := \overline{\bigcup_{\ell = 0}^\infty V_\ell} \subseteq V$.
Then, for all $\ell \in \N_0 \cup \{\infty\}$, there exists a unique Galerkin solution $u_\ell \in V_\ell$ satisfying
\begin{equation}\label{eq1:lemma:apriori}
 a(u_\ell, v_\ell) = F(v_\ell) 
 \quad \text{for all } v_\ell \in V_\ell.
\end{equation}
Moreover, there holds
\begin{equation*}
 \norm{u_\infty - u_\ell}{V} \xrightarrow{\ell \to \infty} 0.
\end{equation*}
\end{lemma}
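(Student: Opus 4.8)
The plan is to combine the Lax--Milgram lemma with a Céa-type quasi-optimality estimate and the density of $\bigcup_{\ell} V_\ell$ in $V_\infty$; this is the classical argument going back to~\cite{bv84}.

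First I would settle existence and uniqueness. Each $V_\ell$ ($\ell \in \N_0$) is a closed subspace of the Hilbert space $V$, and $V_\infty := \overline{\bigcup_{\ell} V_\ell}$ is closed by construction; hence every $V_\ell$ (including $V_\infty$) is itself a Hilbert space. The restriction of $a(\cdot,\cdot)$ to $V_\ell \times V_\ell$ inherits ellipticity and continuity with the \emph{same} constants as on $V$, so the Lax--Milgram lemma provides a unique $u_\ell \in V_\ell$ solving~\eqref{eq1:lemma:apriori} for all $\ell \in \N_0 \cup \{\infty\}$. (For the symmetric forms relevant to this paper, the Riesz representation theorem already suffices.)

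Second I would establish quasi-optimality with a constant independent of $\ell$. Let $C_{\mathrm{ell}}>0$ and $C_{\mathrm{cont}}>0$ be the ellipticity and continuity constants of $a(\cdot,\cdot)$ on $V$. For $\ell \in \N_0$ we have $V_\ell \subseteq V_\infty$, so subtracting the two variational identities gives Galerkin orthogonality $a(u_\infty - u_\ell, v_\ell) = 0$ for all $v_\ell \in V_\ell$. Combining this with ellipticity and continuity in the usual way yields the Céa estimate
\[
 \norm{u_\infty - u_\ell}{V} \le \frac{C_{\mathrm{cont}}}{C_{\mathrm{ell}}} \, \inf_{v_\ell \in V_\ell} \norm{u_\infty - v_\ell}{V} =: \frac{C_{\mathrm{cont}}}{C_{\mathrm{ell}}} \, \operatorname{dist}(u_\infty, V_\ell),
\]
with a constant that does not depend on $\ell$ because all $V_\ell$ sit inside the same $V$ and the form is fixed.

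Third — the only genuinely substantive point, though it is elementary — I would show $\operatorname{dist}(u_\infty, V_\ell) \to 0$ as $\ell \to \infty$. Since $u_\infty \in V_\infty = \overline{\bigcup_\ell V_\ell}$, for any $\eps>0$ there exist $\ell_0 \in \N_0$ and $v \in V_{\ell_0}$ with $\norm{u_\infty - v}{V} < \eps$. By nestedness $v \in V_\ell$ for every $\ell \ge \ell_0$, hence $\operatorname{dist}(u_\infty, V_\ell) \le \norm{u_\infty - v}{V} < \eps$ for all $\ell \ge \ell_0$; as $\eps>0$ is arbitrary, $\operatorname{dist}(u_\infty, V_\ell) \to 0$. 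Inserting this into the Céa estimate gives $\norm{u_\infty - u_\ell}{V} \to 0$, which is the claim. The main obstacle is thus purely one of bookkeeping: verifying that the quasi-optimality constant is $\ell$-independent and using the inclusion $V_{\ell_0} \subseteq V_\ell$ to upgrade a single good approximant to all sufficiently large indices; no compactness or extra structure is required.
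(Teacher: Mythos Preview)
Your argument is correct and follows essentially the same route as the paper: Lax--Milgram for existence/uniqueness on each closed subspace, then the C\'ea lemma combined with the density of $\bigcup_\ell V_\ell$ in $V_\infty$ to conclude. Your write-up is simply more explicit about the $\ell$-independence of the quasi-optimality constant and the nestedness step in the density argument, but the substance is identical.
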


\begin{proof}
For each $\ell \in \N_0 \cup \{\infty\}$, the existence and uniqueness of the Galerkin solution $u_\ell \in V_\ell$
satisfying~\eqref{eq1:lemma:apriori} follow from the Lax--Milgram theorem.
Moreover, since $V_\ell \subseteq V_\infty$, $u_\ell$ is also a Galerkin approximation to $u_\infty$.
Therefore, the C\'ea lemma proves that
\begin{equation*}
 \norm{u_\infty - u_\ell}{V} 
 \lesssim \min_{v_\ell \in V_\ell} \norm{u_\infty - v_\ell}{V} 
 \xrightarrow{\ell \to \infty} 0
\end{equation*}
by definition of $V_\infty$.
\end{proof}

\subsection{Proof of Proposition~\ref{prop:conv:parametric:new}}
\label{proof:prop:conv:parametric:new}

For $\nu \in \QQQ_\ell$, recall the functions $e_\ell^\nu \in \X_\ell$ from~\eqref{eq:def:hat-e-ell-nu}.
Define
\begin{equation*}
 \widehat e_\ell' := \sum_{\nu \in \III} e_\ell^\nu P_\nu 
 \in 
 \X_\ell \otimes \widehat\P_\ell
 \stackrel{\eqref{eq:enriched:spaces}}{=}
 \widehat\V_\ell',
 \quad \text{where} \quad
 e_\ell^\nu := 0 \text{ for all } \nu \in \III \setminus \QQQ_\ell.
\end{equation*}
Note that $\est_\ell(\nu) = \enorm{e_\ell^\nu P_\nu}{0}$ for all $\nu \in \QQQ_\ell$
and define $\est_\ell(\nu) := \enorm{e_\ell^\nu P_\nu}{0} = 0$ for all
$\nu \in \III \setminus \QQQ_\ell$.
The next lemma shows that the sequence $\widehat e_\ell'$ converges to some limit $\widehat e_\infty'$ in~$\V$.

\begin{lemma}\label{lemma:conv:parametric:new}
There exists a sequence $(e_\infty^\nu)_{\nu \in \III} \subset \X$ such that
$\widehat e_{\infty}' := \sum_{\nu \in \III} e_\infty^\nu P_\nu \in \V$ satisfies 
\begin{equation}\label{eq:lemma:e_infty}
 \enorm{\widehat e_\infty'}{0}^2 = \sum_{\nu \in \III} \enorm{e_\infty^\nu P_\nu}{0}^2 < \infty
 \quad \text{and} \quad
 \enorm{\widehat e_{\infty}' - \widehat e_{\ell}'}{0}^2
 = \sum_{\nu \in \III} \enorm{(e_\infty^\nu - e_\ell^\nu) \, P_\nu}{0}^2
 \xrightarrow{\ell \to \infty} 0.
\end{equation}%
\end{lemma}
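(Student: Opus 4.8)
The plan is to recognise $\widehat e_\ell'$ as a \emph{generalised} Galerkin quantity sitting in the increasing sequence of closed subspaces $\widehat\V_\ell' = \X_\ell\otimes\widehat\P_\ell\subseteq\V$ (see~\eqref{eq:enriched:spaces}) and then to combine the \emph{a~priori} convergence of the discrete solutions (Lemma~\ref{lemma:apriori}) with a splitting that isolates the influence of the moving right-hand side. The first step is to prove the defining variational identity
\begin{equation*}
 B_0(\widehat e_\ell', w) = F(w) - B(u_\ell, w) = B(u - u_\ell, w)
 \quad\text{for all } w \in \widehat\V_\ell'.
\end{equation*}
Indeed, writing $w = \sum_{\nu\in\widehat\PPP_\ell} w_\nu P_\nu$ with $w_\nu\in\X_\ell$ and using the $B_0$-orthogonality~\eqref{eq1:lemma:orthogonal} of the modes, the left-hand side reduces to $\sum_{\nu\in\QQQ_\ell} B_0(e_\ell^\nu P_\nu, w_\nu P_\nu)$ because $e_\ell^\nu = 0$ for $\nu\notin\QQQ_\ell$; by~\eqref{eq:def:hat-e-ell-nu} this equals $\sum_{\nu\in\QQQ_\ell}\bigl(F(w_\nu P_\nu) - B(u_\ell, w_\nu P_\nu)\bigr)$, and adjoining the modes $\nu\in\PPP_\ell$, which contribute nothing by the Galerkin identity~\eqref{eq:discrete_formulation}, produces $F(w) - B(u_\ell, w)$; finally $F(w) = B(u,w)$ by the weak formulation~\eqref{eq:weakform}. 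Thus $\widehat e_\ell'$ is the $B_0$-Riesz representative on $\widehat\V_\ell'$ of $w\mapsto B(u-u_\ell,w)$.

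Next I would split off a fixed limiting object. Let $\V_\infty := \overline{\bigcup_{\ell}\V_\ell}$ and $\widehat\V_\infty' := \overline{\bigcup_{\ell}\widehat\V_\ell'}$, and let $u_\infty\in\V_\infty$ be the Galerkin solution of~\eqref{eq:weakform} on $\V_\infty$; Lemma~\ref{lemma:apriori} applied to $B$, $F$, and the increasing closed subspaces $\V_\ell$ gives $\enorm{u_\infty - u_\ell}{}\to 0$. Now let $\widehat g_\ell'\in\widehat\V_\ell'$ solve $B_0(\widehat g_\ell', w) = B(u - u_\infty, w)$ for all $w\in\widehat\V_\ell'$: this is the Galerkin problem for the \emph{frozen} functional $w\mapsto B(u-u_\infty,w)\in\V^*$ (continuous by Cauchy--Schwarz for $B$ and~\eqref{eq:lambda}) in the increasing spaces $\widehat\V_\ell'$, so Lemma~\ref{lemma:apriori}, this time with the bilinear form $B_0$, yields some $\widehat g_\infty'\in\widehat\V_\infty'$ with $\enorm{\widehat g_\infty' - \widehat g_\ell'}{0}\to 0$. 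Setting $\widehat h_\ell' := \widehat e_\ell' - \widehat g_\ell'\in\widehat\V_\ell'$ and subtracting the two variational identities gives $B_0(\widehat h_\ell', w) = B(u_\infty - u_\ell, w)$ for all $w\in\widehat\V_\ell'$; testing with $w = \widehat h_\ell'$ and using $|B(\cdot,\cdot)|\le\enorm{\cdot}{}\,\enorm{\cdot}{}$ together with~\eqref{eq:lambda},
\begin{equation*}
 \enorm{\widehat h_\ell'}{0}^2 = B(u_\infty - u_\ell, \widehat h_\ell') \le \enorm{u_\infty - u_\ell}{}\,\enorm{\widehat h_\ell'}{} \le \lambda^{-1/2}\,\enorm{u_\infty - u_\ell}{}\,\enorm{\widehat h_\ell'}{0},
\end{equation*}
hence $\enorm{\widehat h_\ell'}{0}\le\lambda^{-1/2}\,\enorm{u_\infty - u_\ell}{}\to 0$.

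Combining the two pieces gives $\widehat e_\ell' = \widehat g_\ell' + \widehat h_\ell'\to\widehat g_\infty'$ in $\enorm{\cdot}{0}$. I then set $\widehat e_\infty' := \widehat g_\infty'\in\V$ and let $(e_\infty^\nu)_{\nu\in\III}\subset\X$ be its unique coefficients in the expansion~\eqref{eq:representation}. The Parseval-type identity~\eqref{eq2:lemma:orthogonal}, applied once to $\widehat e_\infty'$ and once to $\widehat e_\infty' - \widehat e_\ell' = \sum_{\nu\in\III}(e_\infty^\nu - e_\ell^\nu)P_\nu\in\V$, produces the two sums displayed in~\eqref{eq:lemma:e_infty}, while $\enorm{\widehat e_\infty' - \widehat e_\ell'}{0}\to 0$ is precisely the convergence just established. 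The genuinely non-routine point, and the reason the splitting is needed, is that $\widehat e_\ell'$ depends on \emph{two} moving objects simultaneously --- the discrete space $\widehat\V_\ell'$ and the data $B(u-u_\ell,\cdot)$ --- so it is not a Galerkin approximation of a fixed element and Lemma~\ref{lemma:apriori} cannot be applied to it directly; writing $\widehat e_\ell' = \widehat g_\ell' + \widehat h_\ell'$ separates a genuine Galerkin approximation of the fixed element $\widehat g_\infty'$ (handled by density) from a data-perturbation term $\widehat h_\ell'$ that is quantitatively controlled by the \emph{a~priori} convergence $\enorm{u_\infty - u_\ell}{}\to 0$.
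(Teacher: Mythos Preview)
Your proof is correct and follows essentially the same approach as the paper: the paper's auxiliary function $\widehat e_\ell''$ (solving $B_0(\widehat e_\ell'',\cdot)=F(\cdot)-B(u_\infty,\cdot)$ on $\widehat\V_\ell'$) is precisely your $\widehat g_\ell'$, and the paper bounds $\enorm{\widehat e_\ell''-\widehat e_\ell'}{0}$ exactly as you bound $\enorm{\widehat h_\ell'}{0}$, then concludes via the triangle inequality and the representation~\eqref{eq:representation}/\eqref{eq2:lemma:orthogonal}. The only differences are notational (you rewrite the right-hand sides using $F(w)=B(u,w)$ and give names to the split pieces), and your closing paragraph makes the motivation for the splitting more explicit than the paper does.
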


\begin{proof}
The tensor-product structure of $\widehat\V_\ell' = \X_\ell \otimes \widehat\P_\ell$ and
pairwise orthogonality of subspaces $X_\ell \otimes \hull\{P_\nu\}$ ($\nu \in \III$)
with respect to $B_0(\cdot,\cdot)$ imply that
\begin{equation*}
 B_0(\widehat e_\ell', v_\ell P_\nu)
 \stackrel{\eqref{eq1:lemma:orthogonal}}{=} B_0(e_\ell^\nu P_\nu, v_\ell P_\nu) 
 \stackrel{\eqref{eq:def:hat-e-ell-nu}}{=} F(v_\ell P_\nu) - B(u_\ell, v_\ell P_\nu) 
 \quad \text{for all } \nu \in \QQQ_\ell \text{ and } v_\ell \in \X_\ell.
\end{equation*}
Moreover, there holds
\begin{equation*}
 B_0(\widehat e_\ell', v_\ell P_\nu)
 \stackrel{\eqref{eq1:lemma:orthogonal}}{=} B_0(e_\ell^\nu P_\nu, v_\ell P_\nu) 
 = 0 
\stackrel{\eqref{eq:discrete_formulation}}{=} F(v_\ell P_\nu) - B(u_\ell, v_\ell P_\nu)
\text{ for all } \nu \in \PPP_\ell \text{ and } v_\ell \in \X_\ell.
\end{equation*}
Hence, $\widehat e_\ell' \in \widehat\V_\ell'$ is the unique solution of the variational problem
\begin{equation}\label{eq:widehat_e'}
 B_0(\widehat e_\ell',\widehat v_\ell') = F(\widehat v_\ell') - B(u_\ell,\widehat v_\ell')
 \quad \text{for all } \widehat v_\ell' \in \widehat\V_\ell'.
\end{equation}%
Lemma~\ref{lemma:apriori} proves that $\enorm{u_\infty - u_\ell}{} \to 0$ as $\ell \to \infty$ for some $u_\infty \in \V$.
Consider the unique solution $\widehat e_\ell'' \in \widehat\V_\ell'$ of the auxiliary problem
\begin{equation}\label{eq:widehat_e''}
 B_0(\widehat e_\ell'',\widehat v_\ell') = F(\widehat v_\ell') - B(u_\infty,\widehat v_\ell')
 \quad \text{for all } \widehat v_\ell' \in \widehat\V_\ell'.
\end{equation}%
Since $\widehat\V_\ell' \subseteq \widehat\V_{\ell+1}'$, Lemma~\ref{lemma:apriori} also proves
that $\enorm{\widehat e_\infty' - \widehat e_\ell''}{} \to 0$ as $\ell \to \infty$ for some $\widehat e_\infty' \in \V$. 
Exploiting~\eqref{eq:widehat_e'} and~\eqref{eq:widehat_e''} for
$\widehat v_\ell' = \widehat e_\ell'' - \widehat e_\ell' \in \widehat\V_\ell'$, we see that
\begin{equation*}
 \enorm{\widehat e_\ell'' - \widehat e_\ell'}{0}^2
 = B_0(\widehat e_\ell'' - \widehat e_\ell', \widehat e_\ell'' - \widehat e_\ell')
 = - B(u_\infty - u_\ell, \widehat e_\ell'' - \widehat e_\ell')
 \le \enorm{u_\infty - u_\ell}{} \enorm{\widehat e_\ell'' - \widehat e_\ell'}{}.
\end{equation*}
With the norm equivalence $\enorm{\cdot}{0} \simeq \enorm{\cdot}{}$, the triangle inequality thus proves that
\begin{equation*}
 \enorm{\widehat e_{\infty}' - \widehat e_{\ell}'}{0}
 \le \enorm{\widehat e_\infty' - \widehat e_\ell''}{0} + \enorm{\widehat e_\ell'' - \widehat e_\ell'}{0} 
 \lesssim \enorm{\widehat e_\infty' - \widehat e_\ell''}{0} + \enorm{u_\infty - u_\ell}{} 
 \xrightarrow{\ell \to \infty} 0.
\end{equation*}
Hence, the proof is concluded by noticing that the existence of $(e_\infty^\nu)_{\nu \in \III} \subset \X$
is a consequence of the representation in~\eqref{eq:representation} and that the equalities in~\eqref{eq:lemma:e_infty}
then immediately follow from~\eqref{eq2:lemma:orthogonal}.
\end{proof}

With the above result, we can proceed to the proof of Proposition~\ref{prop:conv:parametric:new}.

\begin{proof}[Proof of Proposition~\ref{prop:conv:parametric:new}]
Lemma~\ref{lemma:conv:parametric:new} provides a sequence $(e_\infty^\nu)_{\nu \in \III} \subset \X$
satisfying~\eqref{eq:lemma:e_infty}.
For each $\nu\in \III$, we define $\est_\infty(\nu) := \enorm{e_\infty^\nu P_\nu}{0}$.
From~\eqref{eq:lemma:e_infty} it follows that
\begin{equation*}
 \sum_{\nu \in \III} \est_\infty(\nu)^2 = \sum_{\nu \in \III} \enorm{e_\infty^\nu P_\nu}{0}^2 < \infty,
\end{equation*}
and using \eqref{eq:lemma:e_infty} together with the definition of $\est_\ell(\nu)$ in~\eqref{eq:parametric-error-estimate}
we find~that
\begin{equation*}
 \sum_{\nu \in \III} |\est_\infty(\nu) - \est_\ell(\nu)|^2
 = \sum_{\nu \in \III} \big( \enorm{e_\infty^\nu P_\nu}{0} - \enorm{e_\ell^\nu P_\nu}{0} \big)^2
 \le \sum_{\nu \in \III} \enorm{e_\infty^\nu P_\nu - e_\ell^\nu P_\nu}{0}^2
 \xrightarrow{\ell \to \infty} 0.
\end{equation*}
This yields~\eqref{eq:prop:conv:parametric:new} and concludes the proof.
\end{proof}

\subsection{Proof of Proposition~\ref{prop:conv:parametric}}
\label{proof:prop:conv:parametric}%

We first state an auxiliary result for square summable sequences.

\begin{lemma}\label{lemma:parametric}
Let $g : \R_{\ge0} \to \R_{\ge0}$ be a continuous function with $g(0) = 0$.
Let $(x_n)_{n\in\N} \subset \R_{\ge0}$ with $\sum_{n=1}^\infty x_n^2 < \infty$.
For $k \in \N_0$, let $(x_n^{(k)})_{n\in\N} \subset \R_{\ge0}$ with
$\sum_{n=1}^\infty (x_n - x_n^{(k)})^2 \to 0$ as $k \to \infty$.
In addition, let $(\PP_k)_{k \in \N_0}$ be a sequence of nested subsets of $\N$
(i.e., $\PP_k \subseteq \PP_{k+1}$ for all $k \in \N_0$) satisfying the following property:
\begin{equation} \label{eq:lemma:parametric}
x_m^{(k)} \le g \Bigg( \sum_{n \in \PP_{k+1} \setminus \PP_k} (x_n^{(k)})^2 \Bigg)\quad
\text{for all } k \in \N_0 \text{ and } m \in \N \setminus \PP_{k+1}.
\end{equation}
Then 
$\sum_{n \in \N \setminus \PP_k} x_n^2 \to 0$ as $k \to \infty$.
\end{lemma}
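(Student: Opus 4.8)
The plan is to argue by contradiction. Suppose the conclusion fails, so that $\sum_{n \in \N \setminus \PP_k} x_n^2 \not\to 0$ as $k \to \infty$. Since the sets $\PP_k$ are nested, the tails $\sum_{n \in \N \setminus \PP_k} x_n^2$ form a monotonically non-increasing sequence of non-negative numbers (each $\PP_k \subseteq \PP_{k+1}$ removes more terms from the summation range), hence this sequence converges to some limit; if that limit is not zero, there exists $\delta > 0$ with $\sum_{n \in \N \setminus \PP_k} x_n^2 \ge \delta$ for all $k \in \N_0$. I would fix such a $\delta$ and aim to derive a contradiction with $\sum_{n=1}^\infty x_n^2 < \infty$.

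The key mechanism is that property~\eqref{eq:lemma:parametric} forces the "enrichment mass" $S_k := \sum_{n \in \PP_{k+1} \setminus \PP_k} (x_n^{(k)})^2$ to be bounded below. First I would use the $L^2$-convergence $\sum_{n=1}^\infty (x_n - x_n^{(k)})^2 \to 0$ to transfer the lower bound from the $x_n$-tail to the $x_n^{(k)}$-tail: for $k$ large enough, $\sum_{n \in \N \setminus \PP_k} (x_n^{(k)})^2 \ge \delta/2$, say. Now split this tail as $\sum_{n \in \PP_{k+1}\setminus \PP_k}(x_n^{(k)})^2 + \sum_{n \in \N\setminus\PP_{k+1}}(x_n^{(k)})^2 \ge \delta/2$. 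For the second piece, every index $m \in \N \setminus \PP_{k+1}$ satisfies $x_m^{(k)} \le g(S_k)$ by~\eqref{eq:lemma:parametric}, so that piece is a sum of terms each bounded by $g(S_k)^2$; I then need to control how many such terms can carry appreciable mass. The clean way: if $S_k \to 0$ along a subsequence, then $g(S_k) \to 0$ by continuity and $g(0)=0$, so $\sup_{m \notin \PP_{k+1}} x_m^{(k)} \to 0$; combined with the uniform $\ell^2$ bound on $(x_n^{(k)})_n$ (which follows from $\sum x_n^2 < \infty$ and the $L^2$-convergence, so that $\|x^{(k)}\|_{\ell^2}$ is bounded), one would still need the tail $\sum_{n \notin \PP_{k+1}}(x_n^{(k)})^2$ to go to zero — but this requires more than a vanishing sup. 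The honest route is: the second piece is $\le \sum_{n \in \N\setminus\PP_{k+1}}(x_n^{(k)})^2 \le \sum_{n \in \N \setminus \PP_k}(x_n^{(k)})^2$, which I am not trying to bound above here; instead I bound it above by first noting that each term is $\le g(S_k) \cdot x_m^{(k)}$, giving the second piece $\le g(S_k) \sum_{m \notin \PP_{k+1}} x_m^{(k)} \le g(S_k)\,\|x^{(k)}\|_{\ell^1}$ — but $\ell^1$ need not be finite. So the correct bound is term-by-term quadratic: second piece $\le g(S_k)^2 \cdot \#(\N\setminus\PP_{k+1})$, which is infinite. Hence the only workable estimate is: second piece $= \sum_{n \notin \PP_{k+1}}(x_n^{(k)})^2 \to \sum_{n \notin \PP_{k+1}} x_n^2$-type quantities — so I must instead argue that $S_k$ itself cannot be small.

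Concretely, the main step I would carry out is: since $S_k = \sum_{n \in \PP_{k+1}\setminus\PP_k}(x_n^{(k)})^2$ and these index blocks $\PP_{k+1}\setminus\PP_k$ are pairwise disjoint subsets of $\N$, the series $\sum_{k=0}^\infty S_k$ is dominated (up to the $L^2$-perturbation, which contributes a summable-in-$k$ error via $\sum_k \|x-x^{(k)}\|_{\ell^2}^2$... which need not be summable) — this is where I expect the real obstacle. The cleanest fix: observe that $\sum_{k=0}^{K} S_k = \sum_{k=0}^{K}\sum_{n\in\PP_{k+1}\setminus\PP_k}(x_n^{(k)})^2$, and by disjointness of the blocks and the uniform $\ell^2$-bound on the $x^{(k)}$ (combined with the fact that for a \emph{fixed} $n$ the index $n$ lies in exactly one block $\PP_{k+1}\setminus\PP_k$), one can show $\liminf_k S_k = 0$; then along that subsequence $g(S_k)\to 0$, so $\sup_{m\notin\PP_{k+1}} x_m^{(k)} \to 0$, and then — using that $\sum_{n\notin\PP_{k+1}}(x_n^{(k)})^2 \ge \delta/2 - S_k \to \delta/2$ — I get infinitely many indices in $\N\setminus\PP_{k+1}$ each with $(x_n^{(k)})^2$ small but with total mass $\ge \delta/3$; combining with the $L^2$-transfer back to $x_n$, there is a fixed-mass $\ge \delta/4$ tail of $(x_n)$ beyond $\PP_{k+1}$ for all $k$, forcing $\sum_n x_n^2 = \infty$, a contradiction. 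The main obstacle is thus making the $\liminf S_k = 0$ step rigorous and handling the interplay between the $k$-dependent perturbation $x^{(k)}$ and the nested blocks; I would devote most of the write-up to that step, treating everything else as routine triangle-inequality and monotonicity estimates.
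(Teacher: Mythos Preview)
Your proposal has a genuine gap at the final step. The claimed contradiction---that a fixed mass $\ge \delta/4$ in $\N\setminus\PP_{k+1}$ for all $k$ forces $\sum_n x_n^2 = \infty$---is simply false: if some fixed index $n_0$ never enters any $\PP_k$ and $x_{n_0}^2 = \delta$, then $\sum_{n\notin\PP_k} x_n^2 \ge \delta$ for all $k$, yet $\sum_n x_n^2$ can certainly be finite. The obstacle you yourself identify (a vanishing sup over $\N\setminus\PP_{k+1}$ does not control an $\ell^2$-tail over an infinite index set) is exactly where the argument breaks, and your contradiction does not circumvent it.

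The missing idea is to isolate the set $\PP_\infty^c := \N \setminus \bigcup_k \PP_k$ of indices that are \emph{never} added. Your intuition that $S_k \to 0$ is correct and can be argued cleanly: by the triangle inequality in $\ell^2$,
\[
S_k^{1/2} \le \Big(\sum_{n\in\PP_{k+1}\setminus\PP_k} x_n^2\Big)^{1/2} + \Big(\sum_{n=1}^\infty (x_n - x_n^{(k)})^2\Big)^{1/2},
\]
and the first term vanishes because the disjoint blocks $\PP_{k+1}\setminus\PP_k$ partition a subset of $\N$ with $\sum_n x_n^2 < \infty$. Now for each $m\in\PP_\infty^c$ the marking property~\eqref{eq:lemma:parametric} applies for \emph{every} $k$, giving $x_m^{(k)} \le g(S_k) \to 0$; combined with the pointwise convergence $x_m^{(k)}\to x_m$ (which follows from $\ell^2$-convergence), this yields $x_m = 0$ for all $m\in\PP_\infty^c$. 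Hence $\sum_{n\notin\PP_k} x_n^2 = \sum_{n\in\PP_\infty\setminus\PP_k} x_n^2$, and since $\PP_\infty\setminus\PP_k$ decreases to the empty set, this tends to zero by dominated convergence. The paper carries out essentially this argument, with a four-set decomposition (separating $\PP_\infty^c$ and using a free truncation parameter $N$) in place of the last two sentences; your overall direction was close, but the contradiction framework obscured the one extra observation---that indices never selected must have $x_m=0$---which completes the proof directly.
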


\begin{proof}
We divide the proof into 3 steps.

{\bf Step~1.} First, we show that $\min(\PP_{k+1} \setminus \PP_k) \to \infty$ as $k \to \infty$, where $\min(\emptyset) := \infty$.
This statement is trivial if there exists $K \in \N$ such that $\PP_k = \PP_{k+1}$ for all $k \ge K$.
Therefore, without loss of generality, we can consider a sequence of strictly nested sets,
i.e., {$\PP_k \subset \PP_{k+1}$} for all $k \in \N_0$.
We argue by contradiction and assume the existence of $C>0$ such that, for all $k_0 \in \N_0$, there exists
$k \ge k_0$ such that $M_k := \min(\PP_{k+1} \setminus \PP_k) \le C$.
In particular, we can construct a monotonic increasing sequence $(k_j)_{j \in \N_0} \subset \N_0$, i.e., $k_j \leq k_{j+1}$
for all $j \in \N_0$, and consider the corresponding bounded sequence $(M_{k_j})_{j \in \N_0}$.
Since this sequence is bounded, we can extract a convergent subsequence (not relabeled) and denote
its limit by $m := \lim_{j \to \infty} M_{k_j}$.
Since $(M_{k_j})_{j \in \N_0} \subset \N$, it follows that there exists $i \in \N_0$ such that $m = M_{k_j}$ for all $j \ge i$.
In particular, $m = M_{k_i}$ and $m = M_{k_{i+1}}$, so that $m \in \PP_{k_i+1} \cap \PP_{k_{i+1}+1} \, = \PP_{k_i+1}$.
On the other hand, since the sets are nested and $k_i+1 \leq k_{i+1}$,
we conclude that {$\PP_{k_i+1} \subseteq \PP_{k_{i+1}}$}.
This leads to a contradiction:
\begin{equation*}
m = M_{k_{i+1}}
= \min(\PP_{k_{i+1}+1} \setminus \PP_{k_{i+1}})
\in \PP_{k_{i+1}+1} \setminus \PP_{k_{i+1}}
\subseteq \PP_{k_{i+1}+1} \setminus \PP_{k_i+1}
\not\ni m.
\end{equation*}

{\bf Step~2.} Next, let us establish some auxiliary convergence statements.
Using the sum\-mabi\-li\-ty assumption on $(x_n)_{n\in\N}$ and the convergence assumption on $(x_n^{(k)})_{n\in\N}$ ($k \in \N_0$),
it follows from Step~1 that
\begin{equation*}
 \Bigg( \sum_{n \in \PP_{k+1} \setminus \PP_k} (x_n^{(k)})^2 \Bigg)^{1/2}
 \le \Bigg( \sum_{n = 1}^\infty (x_n - x_n^{(k)})^2 \Bigg)^{1/2}
 + \Bigg( \sum_{n = \min(\PP_{k+1} \setminus \PP_k)}^\infty x_n^2 \Bigg)^{1/2}
 \xrightarrow{k \to \infty} 0.
\end{equation*}
Therefore, considering the set
\begin{equation*}
\PP_\infty^c := \{ n \in \N : n \not\in \PP_k \text{ for all } k \in \N_0 \},
\end{equation*}
we deduce from~\eqref{eq:lemma:parametric} that
\begin{equation} \label{eq:old_step3}
 0 \le x_m^{(k)} \le g \Bigg( \sum_{n \in \PP_{k+1} \setminus \PP_k} (x_n^{(k)})^2 \Bigg)
 \xrightarrow{k \to \infty} 0
 \quad \text{for all } m \in \PP_\infty^c.
\end{equation}
To conclude this step, let us show that
\begin{equation} \label{eq:new_step4}
\min((\N \setminus \PP_{k+1}) \setminus \PP_\infty^c) \to \infty
\quad \text{as} \quad k \to \infty,
\quad \text{where } \min(\emptyset) := \infty.
\end{equation}
Let $m_k := \min((\N \setminus \PP_{k+1}) \setminus \PP_\infty^c)$ for all $k \in \N_0$.
Note that the sequence $(m_k)_{k\in\N_0}$ is monotonic increasing,
because the sets are nested.
Since $m_k \in (\N \setminus \PP_{k+1}) \setminus \PP_\infty^c$, there exists $j_0 \in \N_0$ with $j_0>k+1$ such that $m_k \in \PP_{j_0}$.
Therefore, since the sets are nested, we conclude that $m_k \in \PP_j$ for all $j \geq j_0$.
In particular, $m_j \geq m_k +1$ for all $j \geq j_0$.
Together with monotonicity of $(m_k)_{k\in\N_0}$, this implies that $\lim_{k \to \infty} m_k  = \infty$, which yields~\eqref{eq:new_step4}.

{\bf Step~3.} Finally, let us show that $\sum_{n \in \N \setminus \PP_k} x_n^2 \to 0$ as $k \to \infty$.
Let $N \in \N$ be an arbitrary free parameter
and consider the following sets:
\begin{align*}
\AA_k^1[N] &:= (\N \setminus \PP_k) \cap \{ n \in \N : n \geq N \}, \\
\AA_k^2[N] &:= (\N \setminus \PP_{k+1}) \cap \{ n \in \N : n < N \} \cap \PP_\infty^c, \\
\AA_k^3[N] &:= (\N \setminus \PP_{k+1}) \cap \{ n \in \N : n < N \} \setminus \PP_\infty^c, \\
\AA_k^4[N] &:= (\PP_{k+1} \setminus \PP_k) \cap \{ n \in \N : n < N \}.
\end{align*}
Note that this defines a disjoint partition of $\N \setminus \PP_k$, i.e.,
\begin{equation*}
  \N \setminus \PP_k = \AA_k^1[N] \cup \AA_k^2[N] \cup \AA_k^3[N] \cup \AA_k^4[N]\quad
  \hbox{and}\quad
  \AA_k^i[N] \cap \AA_k^j[N] =\emptyset\ \ \hbox{for $i \not= j$}.
\end{equation*}
For the sum over the set $\AA_k^2[N]$, we have
\begin{equation*}
\sum_{n \in  \AA_k^2[N]} x_n^2
\lesssim \sum_{n \in  \AA_k^2[N]} (x_n^{(k)})^2 + \sum_{n \in  \AA_k^2[N]} \big(x_n - x_n^{(k)}\big)^2
\leq \sum_{n \in  \AA_k^2[N]} (x_n^{(k)})^2 + \sum_{n \in  \N} \big(x_n - x_n^{(k)}\big)^2.
\end{equation*}
The second sum on the right-hand side of this estimate converges to $0$ as $k \to \infty$ by assumption,
whereas
the first sum is finite, and therefore also converges to $0$ as $k \to \infty$ because of~\eqref{eq:old_step3}.

For the sums over the sets $\AA_k^3[N]$ and $\AA_k^4[N]$,
we use the convergence result in~\eqref{eq:new_step4} and the result of Step~1, respectively.
Along with the summability assumption on $(x_n)_{n\in\N}$, this proves that
\begin{equation*}
\sum_{n \in  \AA_k^3[N]} x_n^2
\leq \sum_{n \in (\N \setminus \PP_{k+1}) \setminus \PP_\infty^c} x_n^2
\leq \sum_{n = \min ( (\N \setminus \PP_{k+1}) \setminus \PP_\infty^c )}^{\infty} x_n^2
\xrightarrow{k \to \infty} 0
\end{equation*}
and
\begin{equation*}
\sum_{n \in  \AA_k^4[N]} x_n^2
\le \sum_{n \in \PP_{k+1} \setminus \PP_k} x_n^2
\le \sum_{n = \min(\PP_{k+1} \setminus \PP_k)}^\infty x_n^2
\xrightarrow{k \to \infty} 0.
\end{equation*}
We have thus shown that
\begin{equation*}
\sum_{n \in  \AA_k^2[N]} x_n^2
+ \sum_{n \in  \AA_k^3[N]} x_n^2
+ \sum_{n \in  \AA_k^4[N]} x_n^2
\xrightarrow{k \to \infty} 0\quad \hbox{for all $N \in \N$}.
\end{equation*}
In particular, for all $N \in \N$, one has
\begin{equation*}
0
\le \liminf_{k \to \infty} \Bigg( \sum_{n \in \N \setminus \PP_k} x_n^2 \Bigg)
\le \limsup_{k \to \infty} \Bigg( \sum_{n \in \N \setminus \PP_k} x_n^2 \Bigg)
= \limsup_{k \to \infty} \Bigg(\sum_{n \in  \AA_k^1[N]} x_n^2 \Bigg)
 \le \sum_{n = N}^\infty x_n^2.
\end{equation*}
Thus, the limit inferior and the limit superior of $\sum_{n \in \N \setminus \PP_k} x_n^2$
are non-negative and bounded from above by a
tail of the convergent series.
Since $N$ is arbitrary, this leads to the desired convergence
$\sum_{n \in \N \setminus \PP_k} x_n^2 \to 0$ as $k \to \infty$.
This concludes the proof.
\end{proof}

With this lemma, we can proceed to the proof of Proposition~\ref{prop:conv:parametric}.

\begin{proof}[Proof of Proposition~\ref{prop:conv:parametric}]
Proposition~\ref{prop:conv:parametric:new} yields a sequence $(\est_{\infty}(\nu))_{\nu \in \III}$ such that
\begin{equation*}
\sum_{\nu \in \III} \est_\infty(\nu)^2 < \infty\quad
\text{and}\quad
\sum_{\nu \in \III} \big( \est_\infty(\nu) - \est_{\ell_k}(\nu) \big)^2
\xrightarrow{k \to \infty} 0.	
\end{equation*}
Let $g:\R_{\geq 0} \to \R_{\geq 0}$ be a continuous function
defined by $g(s) \,:=\, g_\P (\sqrt{s})$ for all $s \in \R_{\geq 0}$.
Setting $\est_{\ell_k}(\mu) = 0$ for $\mu \in \III \setminus \QQQ_{\ell_k}$,
we deduce from~\eqref{eq1:prop:conv:parametric} that
\begin{equation*}
\est_{\ell_k}(\mu) \le g_\P \big( \est_{\ell_k}(\MMM_{\ell_k}) \big)
= g \big( \est_{\ell_k}(\MMM_{\ell_k})^2 \big)
\quad \text{for all } k \in \N_0 \text{ and } \mu \in \III \setminus \MMM_{\ell_k}.
\end{equation*}
Note that the index set $\III$ is countable, since it can be understood as a countable union of countable sets,
and that
$\PPP_{\ell_n} \subseteq \PPP_{\ell_n+1} \subseteq \PPP_{\ell_{n+1}}$, since $\ell_n+1 \le \ell_{n+1}$.
Therefore, we can establish a one-to-one map between $\III$ and $\N$, which allows us to identify
each index set $\PPP_{\ell_k} \subset \III$ ($k \in \N_0$) with a set $\PP_k \subset \N$.
Then $\PP_k \subseteq \PP_{k+1}$ and
applying Lemma~\ref{lemma:parametric} to the sequences 
$(x_n)_{n \in \N} \,:=\, (\est_\infty(\nu))_{\nu \in \III}$,
$(x_n^{(k)})_{n \in \N} \,:=\, (\est_{\ell_k}(\nu))_{\nu \in \III}$,
we prove that
\begin{equation*}
 \sum_{\nu \in \III \setminus \PPP_{\ell_k}} \est_\infty(\nu)^2
 \xrightarrow{k \to \infty} 0.
\end{equation*}
Note that the sequence $(z_\ell)_{\ell \in \N_0} := \big(\sum_{\nu \in \III \setminus \PPP_{\ell}} \est_\infty(\nu)^2\big)_{\ell \in \N_0}$
is monotonic decreasing and bounded from below. Hence, it is convergent.
Moreover, it has a subsequence that converges to zero. We therefore conclude that
\begin{equation*}
 \sum_{\nu \in \QQQ_\ell} \est_\infty(\nu)^2
 \le \sum_{\nu \in \III \setminus \PPP_\ell} \est_\infty(\nu)^2
 \xrightarrow{\ell \to \infty} 0.
\end{equation*}
Overall, we derive that
\begin{equation*}
\est_\ell(\QQQ_\ell)^2 = \sum_{\nu \in \QQQ_\ell} \est_\ell(\nu)^2
\lesssim \sum_{\nu \in \QQQ_\ell} \est_\infty(\nu)^2
 + \sum_{\nu \in \III} \big(\est_\infty(\nu) - \est_\ell(\nu)\big)^2
 \xrightarrow{\ell \to \infty} 0.
\end{equation*}
This concludes the proof.
\end{proof}

\subsection{Proof of Proposition~\ref{prop:conv:spatial}}
\label{proof:prop:conv:spatial}%

The proof of Proposition~\ref{prop:conv:spatial} essentially follows the same lines as that
of Theorem~2.1 in~\cite{msv08}. Therefore, here we only sketch the proof by demonstrating
how the results of~\cite{msv08} for 
deterministic problems can be extended to the parametric setting in the present paper.

We start by observing that the variational problem~\eqref{eq:weakform}, its discretization,
and the proposed adaptive algorithm satisfy the general framework described in~\cite[Section~2]{msv08}:
\begin{itemize}
\item the variational formulation~\eqref{eq:weakform} clearly fits into the class of problems considered in~\cite[\S2.1]{msv08};
\item our Galerkin discretization~\eqref{eq:discrete_formulation} satisfies the assumptions in~\cite[eqs.~(2.6)--(2.8)]{msv08};
\item the spatial NVB refinement considered in the present paper satisfies the assumptions
on the mesh refinement in~\cite[eqs.~(2.5) and~(2.14)]{msv08};
\item the weak marking condition~\eqref{eq1:prop:conv:spatial} in Proposition~\ref{prop:conv:spatial} is the same as the
marking condition in~\cite[eq.~(2.13)]{msv08};
\item finally, we prove in Lemma~\ref{lem:prop:spatial} below
that the local discrete efficiency estimate holds in the parametric seeting (cf.~\cite[eq.~(2.9b)]{msv08}).
Note that the global reliability of the estimator (see~\eqref{eq:reliability} and~\cite[eq.~(2.9a)]{msv08})
is not exploited in this section (and hence, not needed for the proof of Theorem~\ref{thm:plain_convergence}).
The reliability is only needed to establish convergence of the true error, i.e., $\enorm{u-u_\ell}{} \to 0$ as $\ell \to \infty$ (see Corollary~\ref{cor:plain_convergence}).
\end{itemize}

We will use the following notation:
For $\omega \subset D$, we define
\begin{equation*}
 B_\omega(v,w) := \int_\G \int_\omega a_0 \nabla u\cdot\nabla v \, \dx \, \dpi(\y) + \sum_{m=1}^\infty \int_\G \int_\omega y_m a_m \nabla u\cdot\nabla v \, \dx \, \dpi(\y)
 \text{ for } v, w \in \V.
\end{equation*}
Note that $B_\omega(\cdot,\cdot)$ is symmetric, bilinear, and positive semi-definite.
We denote by $\enorm{v}{\omega} := B_\omega(v,v)^{1/2}$ the corresponding induced semi-norm.
Furthermore, in addition to the limiting space $\V_\infty$ introduced in Lemma~\ref{lemma:apriori},
we define the spatial limiting space $\X_\infty := \bigcup_{\ell = 0}^\infty \X_\ell$.

\begin{lemma} \label{lem:prop:spatial}
Let $z \in \NN_\ell^+$ and denote by
$\omega_\ell(z) := \bigcup \{T \in \TT_\ell : z \in T \}$ the associated vertex patch.
Then the following estimate holds:
\begin{subequations}
\label{eq1:msv}
\begin{equation}
\label{eq1b:msv}
\est_\ell(z)
\leq C \enorm{u - u_\ell}{\omega_\ell(z)}.
\end{equation}
Furthermore, let $u_\infty \in \V$ be the limit of $(u_\ell)_{\ell \in \N_0}$ guaranteed by Lemma~\ref{lemma:apriori}.
If $\widehat\varphi_{\ell,z} \in \X_\infty$, then there holds
\begin{equation}
\label{eq2b:msv}
\est_\ell(z)
\leq C \enorm{u_\infty - u_\ell}{\omega_\ell(z)}.
\end{equation}%
\end{subequations}
The constant $C>0$ in~\eqref{eq1b:msv} and~\eqref{eq2b:msv} depends only on $a_0$ and $\tau$.
\end{lemma}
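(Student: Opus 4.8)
The plan is to establish the local discrete efficiency estimate \eqref{eq1b:msv} by the standard bubble-function argument adapted to the tensor-product setting, and then to deduce \eqref{eq2b:msv} as a consequence using Galerkin orthogonality. First I would fix $z\in\NN_\ell^+$ and recall that, by definition \eqref{eq:spatial-error-estimate},
\[
 \est_\ell(z)^2 = \sum_{\nu\in\PPP_\ell} \frac{\bigl|F(\widehat\varphi_{\ell,z}P_\nu) - B(u_\ell,\widehat\varphi_{\ell,z}P_\nu)\bigr|^2}{\norm{a_0^{1/2}\nabla\widehat\varphi_{\ell,z}}{L^2(D)}^2}.
\]
Using the weak formulation \eqref{eq:weakform}, each numerator equals $|B(u - u_\ell,\widehat\varphi_{\ell,z}P_\nu)|$. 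Since $\supp(\widehat\varphi_{\ell,z})\subseteq\omega_\ell(z)$, this quantity is localized to the patch: $B(u-u_\ell,\widehat\varphi_{\ell,z}P_\nu) = B_{\omega_\ell(z)}(u-u_\ell,\widehat\varphi_{\ell,z}P_\nu)$. By the Cauchy--Schwarz inequality for the semi-definite form $B_{\omega_\ell(z)}$, this is bounded by $\enorm{u-u_\ell}{\omega_\ell(z)}\,\enorm{\widehat\varphi_{\ell,z}P_\nu}{\omega_\ell(z)}$. The remaining task is to control $\sum_{\nu\in\PPP_\ell}\enorm{\widehat\varphi_{\ell,z}P_\nu}{\omega_\ell(z)}^2 / \norm{a_0^{1/2}\nabla\widehat\varphi_{\ell,z}}{L^2(D)}^2$ by a constant depending only on $a_0$ and $\tau$. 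Using the norm equivalence \eqref{eq:lambda} we pass to $\enorm{\cdot}{0}$, and then \eqref{eq2:lemma:orthogonal} applied to the finite sum over $\nu\in\PPP_\ell$ (with $P_\nu$ orthonormal) gives $\sum_{\nu\in\PPP_\ell}\enorm{\widehat\varphi_{\ell,z}P_\nu}{0}^2 = \sum_{\nu\in\PPP_\ell}\norm{a_0^{1/2}\nabla\widehat\varphi_{\ell,z}}{L^2(D)}^2 = \#\PPP_\ell\cdot\norm{a_0^{1/2}\nabla\widehat\varphi_{\ell,z}}{L^2(D)}^2$.

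This last identity shows the naive bound has a spurious factor $\#\PPP_\ell$, so the argument needs to be organized differently: rather than bounding the sum over $\nu$ term-by-term by $\enorm{u-u_\ell}{\omega_\ell(z)}$ each, I would instead observe that the vector $\bigl(B(u-u_\ell,\widehat\varphi_{\ell,z}P_\nu)\bigr)_{\nu\in\PPP_\ell}$ is the coordinate vector (in the $B_0$-orthonormal-up-to-scaling basis $\{\widehat\varphi_{\ell,z}P_\nu\}_{\nu\in\PPP_\ell}$) of the $B_0$-orthogonal projection of $u-u_\ell$ onto $\hull\{\widehat\varphi_{\ell,z}P_\nu:\nu\in\PPP_\ell\}$. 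Concretely, writing $w_z := \sum_{\nu\in\PPP_\ell} c_\nu\,\widehat\varphi_{\ell,z}P_\nu$ for this projection with respect to $B(\cdot,\cdot)$ (or more conveniently $B_0(\cdot,\cdot)$, using that the basis functions for distinct $\nu$ are $B_0$-orthogonal by \eqref{eq1:lemma:orthogonal}), one gets $\sum_{\nu\in\PPP_\ell}\frac{|B_0(u-u_\ell,\widehat\varphi_{\ell,z}P_\nu)|^2}{\norm{a_0^{1/2}\nabla\widehat\varphi_{\ell,z}}{L^2(D)}^2} = \enorm{w_z}{0}^2 \le \enorm{u-u_\ell}{0,\omega_\ell(z)}^2$, where I used that $w_z$ is supported in $\omega_\ell(z)$. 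Converting between $B$ and $B_0$ via \eqref{eq:lambda} (and between the $B$- and $B_0$-variational problems defining the indicators, which introduces only constants depending on $\tau$ and $a_0$) yields \eqref{eq1b:msv} with $C=C(a_0,\tau)$. This projection/Pythagoras reorganization is the main obstacle: one must be careful that the denominator in \eqref{eq:spatial-error-estimate} is exactly the $B_0$-self-energy of each basis function $\widehat\varphi_{\ell,z}P_\nu$ (it is, since $\norm{P_\nu}{L^2_{\pi_m}}=1$ gives $\enorm{\widehat\varphi_{\ell,z}P_\nu}{0} = \norm{a_0^{1/2}\nabla\widehat\varphi_{\ell,z}}{L^2(D)}$ by \eqref{eq2:lemma:orthogonal}), so that the sum really is an honest squared norm of a projection and no $\#\PPP_\ell$ factor appears.

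Finally, for \eqref{eq2b:msv}: assume $\widehat\varphi_{\ell,z}\in\X_\infty$, so that $\widehat\varphi_{\ell,z}P_\nu\in\V_\infty$ for every $\nu\in\PPP_\ell$ (since $\PPP_\ell\subseteq\PPP_j$ for some $j$, hence $P_\nu\in\P_j$ and $\widehat\varphi_{\ell,z}P_\nu\in\V_j\subseteq\V_\infty$). Then the limit $u_\infty\in\V_\infty$ from Lemma~\ref{lemma:apriori} satisfies the Galerkin orthogonality $B(u-u_\infty, \widehat\varphi_{\ell,z}P_\nu)=0$, so in the numerators of $\est_\ell(z)^2$ we may replace $u$ by $u_\infty$: $B(u-u_\ell,\widehat\varphi_{\ell,z}P_\nu) = B(u_\infty-u_\ell,\widehat\varphi_{\ell,z}P_\nu)$. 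Repeating verbatim the projection argument above with $u_\infty$ in place of $u$ gives $\est_\ell(z)\le C\enorm{u_\infty-u_\ell}{\omega_\ell(z)}$ with the same constant. This completes the proof; all constants that enter are the ellipticity/continuity constants of $B$ versus $B_0$, which by \eqref{eq:lambda} depend only on $a_0^{\min}$, $a_0^{\max}$ (i.e.\ on $a_0$) and $\tau$.
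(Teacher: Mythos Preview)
Your overall strategy---build a test function in $\hull\{\widehat\varphi_{\ell,z}P_\nu:\nu\in\PPP_\ell\}$, exploit $B_0$-orthogonality to identify its $\enorm{\cdot}{0}$-norm with $\est_\ell(z)$, then localize and apply Cauchy--Schwarz---is exactly the paper's approach, and your treatment of~\eqref{eq2b:msv} via Galerkin orthogonality on $\V_\infty$ is correct and matches the paper as well.

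There is, however, a genuine slip in the key step. You claim that the vector $\bigl(B(u-u_\ell,\widehat\varphi_{\ell,z}P_\nu)\bigr)_\nu$ is the coordinate vector of the $B_0$-projection of $u-u_\ell$, and your displayed identity has $B_0(u-u_\ell,\cdot)$ in the numerator. But the indicator~\eqref{eq:spatial-error-estimate} carries $B(u-u_\ell,\cdot)$, not $B_0(u-u_\ell,\cdot)$; these differ, and the norm equivalence~\eqref{eq:lambda} does \emph{not} let you swap one for the other inside a fixed pairing. So your ``conversion'' is not justified as written. The clean fix (which is what the paper does) is to introduce the $B_0$-Riesz representative $e_\ell$ of the residual, i.e.\ $B_0(e_\ell,v)=F(v)-B(u_\ell,v)=B(u-u_\ell,v)$ for $v\in\widehat\X_\ell\otimes\P_\ell$, and project \emph{that} in $B_0$: then the coefficients really are $B_0(e_\ell,\widehat\varphi_{\ell,z}P_\nu)=B(u-u_\ell,\widehat\varphi_{\ell,z}P_\nu)$, and the rest of your argument goes through. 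Equivalently, set $c_\nu:=B(u-u_\ell,\widehat\varphi_{\ell,z}P_\nu)/\norm{a_0^{1/2}\nabla\widehat\varphi_{\ell,z}}{L^2(D)}^2$ and $v_z:=\sum_\nu c_\nu\widehat\varphi_{\ell,z}P_\nu$; then $B_0$-orthogonality of the basis gives $\enorm{v_z}{0}^2=\est_\ell(z)^2$, and by construction $B(u-u_\ell,v_z)=\est_\ell(z)^2$, so localizing and using $\enorm{v_z}{}\simeq\enorm{v_z}{0}$ yields~\eqref{eq1b:msv}. The object you need to project is the residual's Riesz representative, not $u-u_\ell$ itself.
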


\begin{proof}
We recall the definition of the spatial error indicators in~\eqref{eq:spatial-error-estimate}:
\begin{equation*}
 \est_\ell(z)^2 
 = \sum_{\nu \in \PPP_\ell}
 \frac{|F(\widehat\varphi_{\ell,z}P_\nu) - B(u_\ell,\widehat\varphi_{\ell,z}P_\nu)|^2}{\norm{a_0^{1/2}\nabla \widehat\varphi_{\ell,z}}{L^2(D)}^2}
 = \sum_{\nu \in \PPP_\ell} \enorm{\GG_{\ell,z,\nu} e_\ell}{0}^2
 \quad \text{for all } z \in \NN_\ell^+,
\end{equation*}%
where $\GG_{\ell,z,\nu} : \V \to {\rm span}\{\widehat\varphi_{\ell,z} P_\nu\}$ is the orthogonal projection
onto the one-dimensional space ${\rm span}\{\widehat\varphi_{\ell,z} P_\nu\}$ with respect to
$B_0(\cdot,\cdot)$, and $e_\ell \in \widehat\X \otimes \P_\ell$ solves
\begin{equation*}
 B_0(e_\ell, v_\ell) = F(v_\ell) - B(u_\ell,v_\ell)
 \quad \text{for all } v_\ell \in \widehat\X \otimes \P_\ell.
\end{equation*}
Note that the functions $\{ \widehat\varphi_{\ell,z} P_\nu : \nu \in \PPP_\ell \}$ are orthogonal
with respect to $B_0(\cdot,\cdot)$.
Hence, $\sum_{\nu \in \PPP_\ell} \GG_{\ell,z,\nu} : \V \to {\rm span} \{\widehat\varphi_{\ell,z} P_\nu : \nu \in \PPP_\ell \} \subset \widehat\X \otimes \P_\ell$
is an orthogonal projection with respect to $B_0(\cdot,\cdot)$ as well.
This yields that
\begin{equation*}
\begin{split}
 \est_\ell(z)^2
 & = \sum_{\nu \in \PPP_\ell} \enorm{\GG_{\ell,z,\nu} e_\ell}{0}^2
 = \enorm[\Big]{\sum_{\nu \in \PPP_\ell} \GG_{\ell,z,\nu} e_\ell}{0}^2
 = B_0\Big(e_\ell, \sum_{\nu \in \PPP_\ell} \GG_{\ell,z,\nu} e_\ell\Big)
 \\
 & = F\Big(\sum_{\nu \in \PPP_\ell} \GG_{\ell,z,\nu} e_\ell\Big) - B\Big(u_\ell, \sum_{\nu \in \PPP_\ell} \GG_{\ell,z,\nu} e_\ell\Big)
 = B\Big(u - u_\ell, \sum_{\nu \in \PPP_\ell} \GG_{\ell,z,\nu} e_\ell\Big).
\end{split}
\end{equation*}
Note that the spatial support of $\sum_{\nu \in \PPP_\ell} \GG_{\ell,z,\nu} e_\ell$ lies in $\omega := \supp(\widehat\varphi_{\ell,z})$.
Then, the Cauchy--Schwarz inequality shows that
\begin{equation*}
\begin{split}
\enorm[\Big]{\sum_{\nu \in \PPP_\ell} \GG_{\ell,z,\nu} e_\ell}{0}^2
& = 
B\Big(u - u_\ell, \sum_{\nu \in \PPP_\ell} \GG_{\ell,z,\nu} e_\ell\Big)
  = B_\omega\Big(u - u_\ell, \sum_{\nu \in \PPP_\ell} \GG_{\ell,z,\nu} e_\ell\Big)
 \\
& \le \enorm{u - u_\ell}{\omega} \, \enorm[\Big]{\sum_{\nu \in \PPP_\ell} \GG_{\ell,z,\nu} e_\ell}{\omega} 
 \le \enorm{u - u_\ell}{\omega} \, \enorm[\Big]{\sum_{\nu \in \PPP_\ell} \GG_{\ell,z,\nu} e_\ell}{} \\
& \stackrel{\eqref{eq:lambda}}{\simeq}\; \enorm{u - u_\ell}{\omega} \, \enorm[\Big]{\sum_{\nu \in \PPP_\ell} \GG_{\ell,z,\nu} e_\ell}{0}.
\end{split}
\end{equation*}
We have thus shown that
\begin{equation*}
   \est_\ell(z)^{2} =
   B\Big(u - u_\ell, \sum_{\nu \in \PPP_\ell} \GG_{\ell,z,\nu} e_\ell\Big) =\; 
   \enorm[\Big]{\sum_{\nu \in \PPP_\ell} \GG_{\ell,z,\nu} e_\ell}{0}^{2}
   \lesssim \enorm{u - u_\ell}{\omega}^{2}.
\end{equation*}
Since $\omega = \supp(\widehat\varphi_{\ell,z}) \subseteq \omega_\ell(z)$, this proves~\eqref{eq1b:msv}.

Finally, if $\widehat\varphi_{\ell,z} \in \X_\infty$, then $\sum_{\nu \in \PPP_\ell} \GG_{\ell,z,\nu} e_\ell \in \V_\infty$.
Therefore, the same arguments as above yield~\eqref{eq2b:msv}.
\end{proof}

Note that in the present setting, the estimates~\eqref{eq1b:msv} and~\eqref{eq2b:msv} from Lemma~\ref{lem:prop:spatial}
replace~\cite[eq.~(2.9b)]{msv08} and~\cite[eq.~(4.11)]{msv08}, respectively.
Having these estimates, we can now proceed to the proof
of Proposition~\ref{prop:conv:spatial}.

\begin{proof}[Proof of Proposition~\ref{prop:conv:spatial}]
Let $\TT_\infty := \bigcup_{k \ge 0} \bigcap_{\ell \ge k} \TT_\ell$ be the set of all elements which remain unrefined after finitely many steps of refinement.
In the spirit of~\cite[eqs.~(4.10)]{msv08}, for all $\ell \in \N_0$, we consider the decomposition
$ \TT_\ell = \TT_\ell^{\rm good} \cup \TT_\ell^{\rm bad} \cup  \TT_\ell^{\rm neither}$,
where
\begin{align*}
 \TT_\ell^{\rm good} &:= \{T \in \TT_\ell : \widehat\varphi_{\ell,z} \in \X_\infty \text{ for all } z \in \NN_\ell^+ \cap T \}, \\
 \TT_\ell^{\rm bad} &:= \{T \in \TT_\ell : T' \in \TT_\infty \text{ for all } T' \in \TT_\ell \text{ with } T \cap T' \neq \emptyset \}, \\
 \TT_\ell^{\rm neither} &:= \TT_\ell \setminus (\TT_\ell^{\rm good} \cup \TT_\ell^{\rm bad}).
\end{align*}
The elements in $\TT_\ell^{\rm good}$ are refined sufficiently many times in order to guarantee~\eqref{eq2b:msv}.
The set $\TT_\ell^{\rm bad}$ consists of all elements such that the whole element patch remains unrefined.
The remaining elements are collected in the set $\TT_\ell^{\rm neither}$.
We note that $\TT_\ell^{\rm good}$ is slightly larger than the corresponding set $\GG_\ell^0$ in~\cite[eq.~(4.10a)]{msv08},
while $\TT_\ell^{\rm bad}$ coincides with the corresponding set $\GG_\ell^+$ in~\cite[eq.~(4.10b)]{msv08}.
As a consequence, $\TT_\ell^{\rm neither}$ is smaller than the corresponding set $\GG_\ell^*$ in~\cite[eq.~(4.10c)]{msv08}.

By arguing as in the proof of Propostion~4.1 in~\cite{msv08}, we exploit
the uniform shape regularity of the mesh $\TT_\ell$ guaranteed by NVB
and use Lemmas~\ref{lem:prop:spatial} and~\ref{lemma:apriori} to prove~that
\begin{equation}\label{eq:msv:step1}
 \sum_{T \in \TT_\ell^{\rm good}} \sum_{z \in \NN_\ell^+ \cap T} \est_\ell(z)^2
 \stackrel{\eqref{eq2b:msv}}{\lesssim} \sum_{T \in \TT_\ell^{\rm good}} \sum_{z \in \NN_\ell^+ \cap T} \enorm{u_\infty - u_\ell}{\omega_\ell(z)}^2
 \lesssim \enorm{u_\infty - u_{\ell}}{}^2 
 \xrightarrow{\ell \to \infty} 0.
\end{equation}

Let $D_\ell^{\rm neither} := \bigcup\{T' \in \TT_\ell : T\cap T' \neq \emptyset \text{ for some } T \in \TT_\ell^{\rm neither} \}$.
Since $\TT_\ell^{\rm neither}$ is contained in the corresponding set $\GG_\ell^*$ in~\cite[eq.~(4.10c)]{msv08},
arguing as in Step~1 of the proof of Proposition~4.2 in~\cite{msv08},
we show that $|D_\ell^{\rm neither}| \to 0$ as $\ell \to \infty$.
Hence, Lemma~\ref{lem:prop:spatial}, uniform shape regularity, and
the fact that the local energy seminorm is absolutely continuous with respect to the Lebesgue measure,
i.e., $\enorm{v}{\omega} \to 0$ as $\vert\omega\vert \to 0$ for all $v \in \V$,
lead to
\begin{equation}\label{eq:msv:step2}
 \sum_{T \in \TT_\ell^{\rm neither}} \sum_{z \in \NN_\ell^+ \cap T} \est_\ell(z)^2
  \stackrel{\eqref{eq1b:msv}}{\lesssim} \sum_{T \in \TT_\ell^{\rm neither}} \sum_{z \in \NN_\ell^+ \cap T} \enorm{u - u_\ell}{\omega_\ell(z)}^2
 \lesssim \enorm{u - u_\ell}{D_\ell^{\rm neither}}^2
 \xrightarrow{\ell \to \infty} 0. 
\end{equation}
We note that~\eqref{eq:msv:step1} and~\eqref{eq:msv:step2} hold independently
of the marking property~\eqref{eq1:prop:conv:spatial},
but rely only on the nestedness of the finite-dimensional subspaces~$\X_\ell \subseteq \X_{\ell+1}$ and
$\V_\ell \subseteq \V_{\ell+1}$ for all $\ell \in \N_0$.

To conclude the proof, it remains to consider the set $\TT_\ell^{\rm bad}$.
Let $(\TT_{\ell_k})_{k \in \N_0}$ be the subsequence of $(\TT_{\ell})_{\ell \in \N_0}$ satisfying~\eqref{eq1:prop:conv:spatial}.
If $z \in \MM_{\ell_k}$ and $T \in \TT_{\ell_k}$ with $z \in T$, then
$T \in \TT_{\ell_k} \setminus \TT_{\ell_k}^{\rm bad} = \TT_{\ell_k}^{\rm good} \cup \TT_{\ell_k}^{\rm neither}$.
Therefore, it follows from~\eqref{eq:msv:step1}--\eqref{eq:msv:step2} that
\begin{equation*}
 \sum_{z \in \MM_{\ell_k}} \est_{\ell_k}(z)^2
 \le \sum_{T \in \TT_{\ell_k}^{\rm good}} \sum_{z \in \NN_{\ell_k}^+ \cap T} \est_{\ell_k}(z)^2
 + \sum_{T \in \TT_{\ell_k}^{\rm neither}} \sum_{z \in \NN_{\ell_k}^+ \cap T} \est_{\ell_k}(z)^2
 \xrightarrow{k \to \infty} 0.
\end{equation*}
This implies that
\begin{equation*}
0 \le \est_{\ell_k}(z)
\stackrel{\eqref{eq1:prop:conv:spatial}}{\leq} g_\X \big( \est_{\ell_k}(\MM_{\ell_k}) \big)
\xrightarrow{k \to \infty} 0
\quad \text{for all } z \in \NN_{\ell_k}^+ \setminus \MM_{\ell_k}.
\end{equation*}
Hence, recalling the definition of $\TT_{\bullet}^{\rm bad}$, we obtain (cf.,~\cite[eq.~(4.17)]{msv08}) 
\begin{equation} \label{eq:msv:step2:1}
\sum_{z \in \NN_{\ell_k}^+ \cap T} \est_{\ell_k}(z)^2 \xrightarrow{k \to \infty} 0
 \quad \text{for all } T \in \TT_{\ell_k}^{\rm bad}.
\end{equation}
Finally, arguing as in Steps~2--5 of the proof of Proposition~4.3 in~\cite{msv08},
we use~\eqref{eq:msv:step2:1} and
apply the Lebesgue dominated convergence theorem to derive that
\begin{equation}\label{eq:msv:step3}
 \sum_{T \in \TT_{\ell_k}^{\rm bad}} \sum_{z \in \NN_{\ell_k}^+ \cap T} \est_{\ell_k}(z)^2
 \xrightarrow{k \to \infty} 0.
\end{equation}
Combining now~\eqref{eq:msv:step1}--\eqref{eq:msv:step3}, we find that
\begin{equation*}
\begin{split}
 &\est_{\ell_k}(\NN_{\ell_k}^+)^2 
 = \sum_{z \in \NN_{\ell_k}^+} \est_{\ell_k}(z)^2
 \\& \quad
 \le \sum_{T \in \TT_{\ell_k}^{\rm good}} \sum_{z \in \NN_{\ell_k}^+ \cap T} \est_{\ell_k}(z)^2
 + \sum_{T \in \TT_{\ell_k}^{\rm neither}} \sum_{z \in \NN_{\ell_k}^+ \cap T} \est_{\ell_k}(z)^2
 + \sum_{T \in \TT_{\ell_k}^{\rm bad}} \sum_{z \in \NN_{\ell_k}^+ \cap T} \est_{\ell_k}(z)^2
 \xrightarrow{k \to \infty} 0.
\end{split}
\end{equation*}
This concludes the proof.
\end{proof}

\section{Proof of Theorem~\ref{thm:linear_convergence} (linear convergence)} \label{section:linear_convergence}

In this section, we prove that in 2D the saturation assumption yields contraction of the energy error
at each iteration of Algorithms~\ref{algorithm}.\ref{marking:A} and~\ref{algorithm}.\ref{marking:B}.
In the proof, we adapt the arguments of~\cite{doerfler,mns00}.
In particular, the following result holds for iterations where the spatial refinement is performed.

\begin{lemma}\label{lemma:satass:conv:spatial}
Let $\ell \in \N_0$.
Suppose that the saturation assumption~\eqref{eq:saturation}
holds for two Galerkin solutions $u_\ell$ and $\widehat u_\ell$ satisfying
\eqref{eq:discrete_formulation} and \eqref{eq:discrete_formulation:hat}, respectively.
Suppose that
\begin{equation}\label{eq1:lemma:satass:conv:spatial}
 \est_\ell(\QQQ_\ell) \le \Cmark \, \est_\ell(\NN_\ell^+)\quad \hbox{with\; $\Cmark > 0$}
\end{equation}
and let $\MM_\ell \subseteq \NN_\ell^+ \cap \NN_{\ell+1}$ be such that
\begin{equation}\label{eq2:lemma:satass:conv:spatial}
 \theta \, \est_\ell(\NN_\ell^+) \le \est_\ell(\MM_\ell)\quad \hbox{with\; $0 < \theta \le 1$}.
\end{equation}
Then, for the enhanced Galerkin solution $u_{\ell+1} \in \X_{\ell+1} \otimes \P_{\ell}$, there holds
\begin{equation*}
 \enorm{u - u_{\ell+1}}{}^2
 \le (1-q) \, \enorm{u - u_{\ell}}{}^2,
\end{equation*}
where $0 < q < 1$  depends only on $a_0$, $\Cmark$, $\qsat$, $\TT_0$, $\tau$, and $\theta$.
\end{lemma}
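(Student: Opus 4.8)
The plan is to combine the Pythagoras identity coming from Galerkin orthogonality with the lower bound on the error reduction furnished by Corollary~\ref{cor:estimator}, and then to pass from the spatial estimator to the true energy error by means of the reliability estimate~\eqref{eq:reliability}. Since in this step only spatial refinement takes place, i.e.\ $\PPP_{\ell+1} = \PPP_\ell$ and hence $\V_\ell \subset \V_{\ell+1} = \X_{\ell+1}\otimes\P_\ell$, Galerkin orthogonality gives
\[
 \enorm{u - u_{\ell+1}}{}^2 = \enorm{u - u_\ell}{}^2 - \enorm{u_{\ell+1} - u_\ell}{}^2,
\]
so that it suffices to establish a lower bound $\enorm{u_{\ell+1}-u_\ell}{}^2 \ge q\,\enorm{u-u_\ell}{}^2$ with some constant $0 < q < 1$.

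To this end, I would apply Corollary~\ref{cor:estimator} with $\TT_\circ := \TT_{\ell+1} = \refine(\TT_\ell,\MM_\ell)$ and with the trivial choice $\MMM_\ell := \emptyset \subseteq \QQQ_\ell$. Since $\MM_\ell \subseteq \NN_\ell^+ \cap \NN_{\ell+1}$, the lower estimate in~\eqref{eq1:cor:estimator}, the monotonicity of $\MM \mapsto \est_\ell(\MM)^2$, and the D\"orfler property~\eqref{eq2:lemma:satass:conv:spatial} yield
\[
 \enorm{u_{\ell+1}-u_\ell}{}^2
 \ge \frac{\lambda}{K}\,\est_\ell\big(\NN_\ell^+ \cap \NN_{\ell+1},\, \emptyset\big)^2
 \ge \frac{\lambda}{K}\,\est_\ell(\MM_\ell)^2
 \ge \frac{\lambda\,\theta^2}{K}\,\est_\ell(\NN_\ell^+)^2.
\]
Next, assumption~\eqref{eq1:lemma:satass:conv:spatial} allows us to compare the spatial estimator with the full estimator: $\est_\ell^2 = \est_\ell(\NN_\ell^+)^2 + \est_\ell(\QQQ_\ell)^2 \le (1+\Cmark^2)\,\est_\ell(\NN_\ell^+)^2$, whence $\est_\ell(\NN_\ell^+)^2 \ge (1+\Cmark^2)^{-1}\,\est_\ell^2$. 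Under the saturation assumption~\eqref{eq:saturation}, the reliability estimate~\eqref{eq:reliability} gives $\est_\ell^2 \ge \frac{1-\qsat^2}{\Lambda\,\Cthm}\,\enorm{u-u_\ell}{}^2$. Chaining these bounds with the previous display, we arrive at
\[
 \enorm{u_{\ell+1}-u_\ell}{}^2
 \ge \frac{\lambda\,\theta^2\,(1-\qsat^2)}{K\,(1+\Cmark^2)\,\Lambda\,\Cthm}\,\enorm{u-u_\ell}{}^2
 =: q\,\enorm{u-u_\ell}{}^2.
\]
Because $\lambda < 1$, $0 < \theta \le 1$, $0 < 1-\qsat^2 < 1$, $K \ge 1$, $\Lambda > 1$, $\Cthm \ge 1$, and $\Cmark > 0$, we obtain $0 < q < 1$, and the desired contraction follows by inserting this into the Pythagoras identity. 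Since $\lambda$, $\Lambda$ depend only on $a_0$ and $\tau$ (via~\eqref{eq2:a}--\eqref{eq3:a} and~\eqref{eq:lambda}), $\Cthm$ on $\TT_0$ and $a_0$, and $K$ on $\TT_0$, the constant $q$ has the asserted dependence.

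The only genuinely delicate point is the use of Corollary~\ref{cor:estimator}: this two-sided, \emph{localized} error-reduction estimate---available only for $d = 2$, which is why the lemma is restricted to the plane---is what lets us convert the D\"orfler-marked estimator mass $\est_\ell(\MM_\ell)$ directly into the energy gain $\enorm{u_{\ell+1}-u_\ell}{}$ without recourse to any inverse estimate. One should verify that Corollary~\ref{cor:estimator} is invoked with a legitimate data set (pure spatial refinement corresponds exactly to $\MMM_\ell = \emptyset$) and that the hypothesis $\MM_\ell \subseteq \NN_\ell^+ \cap \NN_{\ell+1}$ is precisely what keeps the marked vertices among the newly created interior nodes, so that $\varphi_{\ell+1,z} = \widehat\varphi_{\ell,z}$ and the stable subspace decomposition underlying the corollary applies. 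Everything else is a routine combination of the already-established Theorem~\ref{thm:estimator} and Corollary~\ref{cor:estimator}.
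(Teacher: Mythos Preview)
Your argument is correct and follows the same route as the paper: Galerkin orthogonality (Pythagoras), the lower bound from Corollary~\ref{cor:estimator} to pass from $\enorm{u_{\ell+1}-u_\ell}{}^2$ to $\est_\ell(\MM_\ell)^2$, the D\"orfler bound~\eqref{eq2:lemma:satass:conv:spatial}, the comparison~\eqref{eq1:lemma:satass:conv:spatial}, and reliability~\eqref{eq:reliability}. You even obtain the identical contraction factor $q = \dfrac{\lambda}{\Lambda}\cdot\dfrac{\theta^2(1-\qsat^2)}{\Cthm(1+\Cmark^2)K}$, and your remarks on the $d=2$ restriction and the dependence of the constants are accurate.
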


\begin{proof}
Using reliability~\eqref{eq:reliability}, the refinement criterion~\eqref{eq1:lemma:satass:conv:spatial},
and the marking criterion~\eqref{eq2:lemma:satass:conv:spatial}, we obtain
\begin{equation*}
 \frac{1-\qsat^2}{\Lambda\Cthm} \, \enorm{u - u_\ell}{}^2
 \stackrel{\eqref{eq:reliability}}{\le} 
 \est_\ell(\NN_\ell^+)^2 + \est_\ell(\QQQ_\ell)^2
 \stackrel{\eqref{eq1:lemma:satass:conv:spatial}}{\le} (1 + \Cmark^2) \, \est_\ell(\NN_\ell^+)^2
 \stackrel{\eqref{eq2:lemma:satass:conv:spatial}}{\le} (1 + \Cmark^2) \theta^{-2} \, \est_\ell(\MM_\ell)^2. 
\end{equation*}
Hence, using Corollary~\ref{cor:estimator} and the fact that
$\MM_\ell \subseteq \NN_\ell^+ \cap \NN_{\ell+1}$, we derive that
\begin{equation*}
\begin{split}
\enorm{u - u_{\ell+1}}{}^2 
& \stackrel{\phantom{\eqref{eq1:cor:estimator}}}{=} \enorm{u - u_\ell}{}^2 - \enorm{u_{\ell+1} - u_\ell}{}^2 \\
& \stackrel{\eqref{eq1:cor:estimator}}{\le} \enorm{u - u_\ell}{}^2 
 - \frac{\lambda}{K} \, \est_\ell (\NN_\ell^+ \cap \NN_{\ell+1} )^2 \\
& \stackrel{\phantom{\eqref{eq1:cor:estimator}}}{\leq} \enorm{u - u_\ell}{}^2 - \frac{\lambda}{K} \, \est_\ell(\MM_\ell)^2
 \le \Bigg(1 - \frac{\lambda}{\Lambda} \cdot \frac{\theta^2 (1-\qsat^2)}{\Cthm (1+\Cmark^2) K} \Bigg) \, \enorm{u - u_\ell}{}^2.
\end{split}
\end{equation*}
This concludes the proof.
\end{proof}

The next lemma concerns iterations where parametric enrichment is performed.
The proof is similar to that of Lemma~\ref{lemma:satass:conv:spatial}.

\begin{lemma}\label{lemma:satass:conv:parametric}
Let $\ell \in \N_0$.
Suppose that the saturation assumption~\eqref{eq:saturation}
holds for two Galerkin solutions $u_\ell$ and $\widehat u_\ell$ satisfying
\eqref{eq:discrete_formulation} and \eqref{eq:discrete_formulation:hat}, respectively.
Suppose that 
\begin{equation*}
\est_\ell(\NN_\ell^+) \le \Cmark \, \est_\ell(\QQQ_\ell)\quad \hbox{with\; $\Cmark > 0$}
\end{equation*}
and let $\MMM_\ell \subseteq \QQQ_\ell \cap \PPP_{\ell+1}$ be such that
\begin{equation*}
\theta \, \est_\ell(\QQQ_\ell) \le \est_\ell(\MMM_\ell)\quad \hbox{with\; $0 < \theta \le 1$}.
\end{equation*}
Then, for the enhanced Galerkin solution $u_{\ell+1} \in \X_{\ell} \otimes \P_{\ell+1}$, there holds
\begin{equation*}
 \enorm{u - u_{\ell+1}}{}^2
 \le (1-q) \, \enorm{u - u_{\ell}}{}^2,
\end{equation*}
where $0 < q < 1$ depends only on $a_0$, $\Cmark$, $\qsat$, $\TT_0$, $\tau$, and $\theta$.
\end{lemma}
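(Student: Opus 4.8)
The plan is to transcribe the proof of Lemma~\ref{lemma:satass:conv:spatial} with the roles of the spatial and parametric error contributions interchanged: the two-level spatial estimate gets replaced by the hierarchical parametric estimate, and Corollary~\ref{cor:estimator} is invoked in the degenerate case of \emph{pure} parametric enrichment, i.e., with $\MM_\ell = \emptyset$. Since Theorem~\ref{thm:linear_convergence} assumes $d = 2$, Corollary~\ref{cor:estimator} is available throughout.

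First I would combine the reliability estimate~\eqref{eq:reliability}, valid thanks to the saturation assumption, with the two hypotheses of the lemma: the refinement criterion gives $\est_\ell^2 = \est_\ell(\NN_\ell^+)^2 + \est_\ell(\QQQ_\ell)^2 \le (1 + \Cmark^2)\,\est_\ell(\QQQ_\ell)^2$, and the marking criterion gives $\est_\ell(\QQQ_\ell)^2 \le \theta^{-2}\,\est_\ell(\MMM_\ell)^2$, so that
\begin{equation*}
 \frac{1 - \qsat^2}{\Lambda\Cthm}\,\enorm{u - u_\ell}{}^2
 \le \est_\ell(\NN_\ell^+)^2 + \est_\ell(\QQQ_\ell)^2
 \le (1 + \Cmark^2)\,\theta^{-2}\,\est_\ell(\MMM_\ell)^2 .
\end{equation*}
Then I would use Galerkin orthogonality for $\V_\ell \subset \V_{\ell+1}$ (as in the proof of Lemma~\ref{lemma:satass:conv:spatial}), namely $\enorm{u - u_{\ell+1}}{}^2 = \enorm{u - u_\ell}{}^2 - \enorm{u_{\ell+1} - u_\ell}{}^2$, together with the lower bound from Corollary~\ref{cor:estimator}: applied with $\MM_\ell = \emptyset$ (hence $\TT_{\ell+1} = \TT_\ell$) and with $\QQQ_\ell \cap \PPP_{\ell+1}$ in the role of the marked index set---for which $\PPP_\ell \cup (\QQQ_\ell \cap \PPP_{\ell+1}) = \PPP_{\ell+1}$ and $\QQQ_\ell \cap \PPP_{\ell+1} \subseteq \QQQ_\ell$---it yields $\frac{\lambda}{K}\,\est_\ell(\QQQ_\ell \cap \PPP_{\ell+1})^2 \le \enorm{u_{\ell+1} - u_\ell}{}^2$, and thus $\frac{\lambda}{K}\,\est_\ell(\MMM_\ell)^2 \le \enorm{u_{\ell+1} - u_\ell}{}^2$ since $\MMM_\ell \subseteq \QQQ_\ell \cap \PPP_{\ell+1}$ by assumption.

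Chaining these two estimates then produces $\enorm{u - u_{\ell+1}}{}^2 \le (1 - q)\,\enorm{u - u_\ell}{}^2$ with
\begin{equation*}
 q = \frac{\lambda}{\Lambda}\cdot\frac{\theta^2\,(1 - \qsat^2)}{\Cthm\,(1 + \Cmark^2)\,K} ,
\end{equation*}
and $0 < q < 1$ follows from $0 < \lambda \le \Lambda$, $0 < \qsat < 1$, $0 < \theta \le 1$, and $\Cthm, K \ge 1$; the claimed dependence of $q$ is read off from that of $\lambda, \Lambda$ (on $a_0$ and $\tau$), of $\Cthm$ (on $\TT_0$ and $a_0$), and of $K$ (on $\TT_0$). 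I do not expect any genuine obstacle; the one point that deserves a word is the legitimacy of invoking Corollary~\ref{cor:estimator} when no mesh refinement takes place, which is fine because $\TT_{\ell+1} = \refine(\TT_\ell, \emptyset) = \TT_\ell$ is trivially a 2D NVB refinement of $\TT_\ell$ and, with $\NN_\ell^+ \cap \NN_{\ell+1} = \emptyset$, the spatial term in~\eqref{eq1:cor:estimator} vanishes, leaving precisely the two-sided bound on $\enorm{u_{\ell+1} - u_\ell}{}$ in terms of $\est_\ell(\QQQ_\ell \cap \PPP_{\ell+1})$ that the argument uses.
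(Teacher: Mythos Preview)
Your proposal is correct and follows exactly the approach the paper intends: the paper simply states that the proof is similar to that of Lemma~\ref{lemma:satass:conv:spatial}, and your argument is precisely the symmetric transcription of that proof, yielding the same contraction constant $q = \tfrac{\lambda}{\Lambda}\cdot\tfrac{\theta^2(1-\qsat^2)}{\Cthm(1+\Cmark^2)K}$. Your justification for applying Corollary~\ref{cor:estimator} with $\MM_\ell=\emptyset$ and $\MMM_\bullet = \QQQ_\ell\cap\PPP_{\ell+1}$ (using $\PPP_{\ell+1}\subseteq\widehat\PPP_\ell$ to get $\PPP_\ell\cup(\QQQ_\ell\cap\PPP_{\ell+1})=\PPP_{\ell+1}$, and $\NN_\ell^+\cap\NN_\ell=\emptyset$) is exactly the point that needs checking and is handled correctly.
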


With these results, we can prove Theorem~\ref{thm:linear_convergence}.

\begin{proof}[Proof of Theorem~\ref{thm:linear_convergence}]
We divide the proof into two steps.

{\bf Step~1.} Consider Algorithm~\ref{algorithm}.\ref{marking:A}.
In case~(a) of Marking criterion~\ref{marking:A}, we apply Lemma~\ref{lemma:satass:conv:spatial}
with $\Cmark = \vartheta^{-1}$ and $\theta = \thetaX$,
whereas in case (b) of this marking criterion, we use Lemma~\ref{lemma:satass:conv:parametric}
with $\Cmark = \vartheta$ and $\theta = \thetaP$.
In both cases, this proves contraction of the energy error
$\enorm{u - u_{\ell+1}}{} \le q_{\rm lin} \, \enorm{u - u_\ell}{}$ with $q_{\rm lin} \in (0,1)$.

{\bf Step~2.} Consider now Algorithm~\ref{algorithm}.\ref{marking:B}.
In case~(a) of Marking criterion~\ref{marking:B} one has
\begin{equation*}
 \thetaP \, \est_\ell(\QQQ_\ell) \le \est_\ell(\widetilde\MMM_\ell) 
 \le \vartheta^{-1} \est_\ell(\widetilde\RR_\ell)
 \le \vartheta^{-1} \est_\ell(\NN_\ell^+).
\end{equation*}
Hence, Lemma~\ref{lemma:satass:conv:spatial} applies to this case
with $\Cmark = \thetaP^{-1}\vartheta^{-1}$ and $\theta = \thetaX$.
Similarly, in case~(b) of Marking criterion~\ref{marking:B}, one has
\begin{equation*}
 \thetaX \, \est_\ell(\NN_\ell^+) \le \est_\ell(\widetilde\MM_\ell) 
 \le \est_\ell(\widetilde\RR_\ell) 
 < \vartheta \, \est_\ell(\widetilde\MMM_\ell)
 \le \vartheta \, \est_\ell(\QQQ_\ell).
\end{equation*}
Hence, in this case, Lemma~\ref{lemma:satass:conv:parametric} applies with $\Cmark = \thetaX^{-1}\vartheta$ and $\theta = \thetaP$.
Thus, in both cases, we obtain contraction of the energy error
$\enorm{u - u_{\ell+1}}{} \le q_{\rm lin} \, \enorm{u - u_\ell}{}$ with $q_{\rm lin} \in (0,1)$.
\end{proof}

\appendix
\section{Numerical results (extended version)} \label{sec:appendix}

In Tables~\ref{tab:fulldata:A}--\ref{tab:fulldata:D}, we collect the computational costs~\eqref{eq:cost} 
and empirical convergence rates for
Algorithms~\ref{algorithm}.\ref{marking:A}--\ref{algorithm}.\ref{marking:D} applied to
the parametric model problem from Section~\ref{sec:numer:results}.
The empirical convergence rates are computed as the slopes of 
the lines which are best fit, in the least squares sense, of the overall error estimates $\est_\ell$ 
computed by the algorithm with the corresponding pair of marking parameters 
$(\thetaX,\thetaP) \,{\in}\, \Theta\,{\times}\,\Theta$
with $\Theta = \{ 0.1, 0.2, \dots, 0.9 \}$.
We observe that all the rates are similar and vary in a range between $-0.36$ and $-0.32$.
Furthermore, in each table, numbers in boldface indicate the smallest cost in the corresponding row (i.e., for fixed $\thetaX$), 
whereas the starred boldface number shows the overall smallest cost in the table.

\begin{table}[h!]
\begin{center} 
\setlength\tabcolsep{2.7pt} 
\smallfontthree{
\renewcommand{\arraystretch}{1.50} 
\begin{tabular}{c |*{8}{c} c } 
\noalign{\hrule height 1.0pt}
\multicolumn{10}{c}{ {\bf Algorithm~\ref{algorithm}.A} } \\
\noalign{\hrule height 1.0pt}
\backslashbox{$\thetaX$}{$\thetaP$} 
& 0.1 & 0.2 & 0.3 & 0.4 & 0.5 & 0.6 & 0.7 & 0.8 & 0.9 \\
\hline\\[-13pt]
0.1 
& \begin{tabular}{@{}c@{}}36,634,764 			\cellvsp $-$0.3395\end{tabular}
& \begin{tabular}{@{}c@{}}36,634,764 			\cellvsp $-$0.3395\end{tabular}
& \begin{tabular}{@{}c@{}}36,634,764 			\cellvsp $-$0.3395\end{tabular}
& \begin{tabular}{@{}c@{}}36,634,764 			\cellvsp $-$0.3395\end{tabular}
& \begin{tabular}{@{}c@{}}36,634,764 			\cellvsp $-$0.3395\end{tabular}
& \begin{tabular}{@{}c@{}}37,126,693 			\cellvsp $-$0.3412\end{tabular}
& \begin{tabular}{@{}c@{}}38,135,658 			\cellvsp $-$0.3401\end{tabular}
& \begin{tabular}{@{}c@{}}38,948,918 			\cellvsp $-$0.3398\end{tabular}
& \begin{tabular}{@{}c@{}}{\bf 36,522,593} 		\cellvsp $-$0.3401\end{tabular} \\  
\rowcolor{myLightGray}
0.2 
& \begin{tabular}{@{}c@{}}10,652,382 			\cellvsp $-$0.3386\end{tabular}
& \begin{tabular}{@{}c@{}}10,652,382 			\cellvsp $-$0.3386\end{tabular}
& \begin{tabular}{@{}c@{}}10,652,382 			\cellvsp $-$0.3386\end{tabular}
& \begin{tabular}{@{}c@{}}10,652,382 			\cellvsp $-$0.3386\end{tabular}
& \begin{tabular}{@{}c@{}}10,652,382 			\cellvsp $-$0.3386\end{tabular}
& \begin{tabular}{@{}c@{}}10,472,434 			\cellvsp $-$0.3401\end{tabular}
& \begin{tabular}{@{}c@{}}10,611,056 			\cellvsp $-$0.3392\end{tabular}
& \begin{tabular}{@{}c@{}}10,842,902 			\cellvsp $-$0.3395\end{tabular}
& \begin{tabular}{@{}c@{}}{\bf 9,891,950} 		\cellvsp $-$0.3398\end{tabular} \\  
0.3
& \begin{tabular}{@{}c@{}}5,737,346 			\cellvsp $-$0.3380\end{tabular}
& \begin{tabular}{@{}c@{}}5,737,346 			\cellvsp $-$0.3380\end{tabular}
& \begin{tabular}{@{}c@{}}5,737,346 			\cellvsp $-$0.3380\end{tabular}
& \begin{tabular}{@{}c@{}}5,737,346 			\cellvsp $-$0.3380\end{tabular}
& \begin{tabular}{@{}c@{}}5,737,346 			\cellvsp $-$0.3380\end{tabular}
& \begin{tabular}{@{}c@{}}5,398,269 			\cellvsp $-$0.3392\end{tabular}
& \begin{tabular}{@{}c@{}}5,444,071 			\cellvsp $-$0.3386\end{tabular}
& \begin{tabular}{@{}c@{}}5,491,501			 	\cellvsp $-$0.3390\end{tabular}
& \begin{tabular}{@{}c@{}}{\bf 4,487,527} 		\cellvsp $-$0.3392\end{tabular} \\  
\rowcolor{myLightGray}
0.4 
& \begin{tabular}{@{}c@{}}4,066,841 			\cellvsp $-$0.3369\end{tabular}
& \begin{tabular}{@{}c@{}}4,066,841 			\cellvsp $-$0.3369\end{tabular}
& \begin{tabular}{@{}c@{}}4,066,841 			\cellvsp $-$0.3369\end{tabular}
& \begin{tabular}{@{}c@{}}4,066,841 			\cellvsp $-$0.3369\end{tabular}
& \begin{tabular}{@{}c@{}}4,066,841 			\cellvsp $-$0.3369\end{tabular}
& \begin{tabular}{@{}c@{}}3,657,156 			\cellvsp $-$0.3382\end{tabular}
& \begin{tabular}{@{}c@{}}3,703,037 			\cellvsp $-$0.3379\end{tabular}
& \begin{tabular}{@{}c@{}}3,738,567 			\cellvsp $-$0.3384\end{tabular}
& \begin{tabular}{@{}c@{}}{\bf 3,005,547} 		\cellvsp $-$0.3385\end{tabular} \\  
0.5
& \begin{tabular}{@{}c@{}}2,974,895 			\cellvsp $-$0.3360\end{tabular}
& \begin{tabular}{@{}c@{}}2,974,895 			\cellvsp $-$0.3360\end{tabular}
& \begin{tabular}{@{}c@{}}2,974,895 			\cellvsp $-$0.3360\end{tabular}
& \begin{tabular}{@{}c@{}}2,974,895 			\cellvsp $-$0.3360\end{tabular}
& \begin{tabular}{@{}c@{}}2,974,895 			\cellvsp $-$0.3360\end{tabular}
& \begin{tabular}{@{}c@{}}2,523,497 			\cellvsp $-$0.3374\end{tabular}
& \begin{tabular}{@{}c@{}}2,526,413 			\cellvsp $-$0.3373\end{tabular}
& \begin{tabular}{@{}c@{}}2,521,302 			\cellvsp $-$0.3381\end{tabular}
& \begin{tabular}{@{}c@{}}{\bf 2,193,757}		\cellvsp $-$0.3380\end{tabular} \\  
\rowcolor{myLightGray}
0.6 
& \begin{tabular}{@{}c@{}}2,838,789 			\cellvsp $-$0.3371\end{tabular}
& \begin{tabular}{@{}c@{}}2,838,789 			\cellvsp $-$0.3371\end{tabular}
& \begin{tabular}{@{}c@{}}2,838,789 			\cellvsp $-$0.3371\end{tabular}
& \begin{tabular}{@{}c@{}}2,838,789 			\cellvsp $-$0.3371\end{tabular}
& \begin{tabular}{@{}c@{}}2,838,789 			\cellvsp $-$0.3371\end{tabular}
& \begin{tabular}{@{}c@{}}2,323,857 			\cellvsp $-$0.3383\end{tabular}
& \begin{tabular}{@{}c@{}}2,351,810 			\cellvsp $-$0.3384\end{tabular}
& \begin{tabular}{@{}c@{}}2,331,065 			\cellvsp $-$0.3389\end{tabular}
& \begin{tabular}{@{}c@{}}{\bf 1,900,951} 		\cellvsp $-$0.3385\end{tabular} \\  
0.7 
& \begin{tabular}{@{}c@{}}2,658,382			 	\cellvsp $-$0.3373\end{tabular}
& \begin{tabular}{@{}c@{}}2,658,382 			\cellvsp $-$0.3373\end{tabular}
& \begin{tabular}{@{}c@{}}2,658,382 			\cellvsp $-$0.3373\end{tabular}
& \begin{tabular}{@{}c@{}}2,658,382				\cellvsp $-$0.3373\end{tabular}
& \begin{tabular}{@{}c@{}}2,658,382 			\cellvsp $-$0.3373\end{tabular}
& \begin{tabular}{@{}c@{}}2,094,382 			\cellvsp $-$0.3380\end{tabular}
& \begin{tabular}{@{}c@{}}2,046,871				\cellvsp $-$0.3390\end{tabular}
& \begin{tabular}{@{}c@{}}2,014,430				\cellvsp $-$0.3399\end{tabular}
& \begin{tabular}{@{}c@{}}{\bf 1,566,530} 		\cellvsp $-$0.3394\end{tabular} \\  
\rowcolor{myLightGray}
0.8 
& \begin{tabular}{@{}c@{}}2,454,929 			\cellvsp $-$0.3346\end{tabular}
& \begin{tabular}{@{}c@{}}2,454,929 			\cellvsp $-$0.3346\end{tabular}
& \begin{tabular}{@{}c@{}}2,454,929 			\cellvsp $-$0.3346\end{tabular}
& \begin{tabular}{@{}c@{}}2,454,929 			\cellvsp $-$0.3346\end{tabular}
& \begin{tabular}{@{}c@{}}2,454,929 			\cellvsp $-$0.3346\end{tabular}
& \begin{tabular}{@{}c@{}}2,403,912 			\cellvsp $-$0.3354\end{tabular}
& \begin{tabular}{@{}c@{}}1,628,563 			\cellvsp $-$0.3367\end{tabular}
& \begin{tabular}{@{}c@{}}$\mathbf{1,560,286^\star}$ \cellvsp $-$0.3363\end{tabular}
& \begin{tabular}{@{}c@{}}1,731,044 			\cellvsp $-$0.3373\end{tabular} \\  
0.9 
& \begin{tabular}{@{}c@{}}3,042,687 			\cellvsp $-$0.3278\end{tabular}
& \begin{tabular}{@{}c@{}}3,042,687 			\cellvsp $-$0.3278\end{tabular}
& \begin{tabular}{@{}c@{}}3,042,687 			\cellvsp $-$0.3278\end{tabular}
& \begin{tabular}{@{}c@{}}3,042,687 			\cellvsp $-$0.3278\end{tabular}
& \begin{tabular}{@{}c@{}}3,042,687 			\cellvsp $-$0.3278\end{tabular}
& \begin{tabular}{@{}c@{}}2,891,115 			\cellvsp $-$0.3308\end{tabular}
& \begin{tabular}{@{}c@{}}1,978,191 			\cellvsp $-$0.3289\end{tabular}
& \begin{tabular}{@{}c@{}}{\bf 1,776,192} 		\cellvsp $-$0.3321\end{tabular}
& \begin{tabular}{@{}c@{}}1,993,972 			\cellvsp $-$0.3334\end{tabular} \\  
\noalign{\hrule height 1.0pt}
\end{tabular}
\vspace{8pt}
\caption{
Computational cost (top of the cell) and empirical convergence rates (bottom of the cell)
for Algorithm~\ref{algorithm}.\ref{marking:A} applied to the parametric model problem
in Section~\ref{sec:numer:results}.
}
\label{tab:fulldata:A}
}
\end{center} 
\end{table}

\begin{table}[t!]
\begin{center} 
\setlength\tabcolsep{1.8pt} 
\smallfontthree{
\renewcommand{\arraystretch}{1.50}
\begin{tabular}{c |*{8}{c} c } 
\noalign{\hrule height 1.0pt}
\multicolumn{10}{c}{ {\bf Algorithm~\ref{algorithm}.B} } \\
\noalign{\hrule height 1.0pt}
\backslashbox{$\thetaX$}{$\thetaP$} 
& 0.1 & 0.2 & 0.3 & 0.4 & 0.5 & 0.6 & 0.7 & 0.8 & 0.9 \\
\hline\\[-13pt]
0.1 
& \begin{tabular}{@{}c@{}}{\bf 55,591,871}		\cellvsp $-$0.3543\end{tabular}
& \begin{tabular}{@{}c@{}}{\bf 55,591,871} 		\cellvsp $-$0.3543\end{tabular}
& \begin{tabular}{@{}c@{}}{\bf 55,591,871} 		\cellvsp $-$0.3543\end{tabular}
& \begin{tabular}{@{}c@{}}61,960,516 			\cellvsp $-$0.3478\end{tabular}
& \begin{tabular}{@{}c@{}}71,558,237 			\cellvsp $-$0.3372\end{tabular}
& \begin{tabular}{@{}c@{}}82,193,108 			\cellvsp $-$0.3326\end{tabular}
& \begin{tabular}{@{}c@{}}90,977,432 			\cellvsp $-$0.3288\end{tabular}
& \begin{tabular}{@{}c@{}}99,219,735			\cellvsp $-$0.3281\end{tabular}
& \begin{tabular}{@{}c@{}}$>1$e+$08$   			\cellvsp $-$0.3216\end{tabular} \\
\rowcolor{myLightGray}
0.2 
& \begin{tabular}{@{}c@{}}11,801,518 			\cellvsp $-$0.3543\end{tabular}
& \begin{tabular}{@{}c@{}}11,801,518 			\cellvsp $-$0.3543\end{tabular}
& \begin{tabular}{@{}c@{}}11,801,518 			\cellvsp $-$0.3543\end{tabular}
& \begin{tabular}{@{}c@{}}{\bf 11,606,375}		\cellvsp $-$0.3542\end{tabular}
& \begin{tabular}{@{}c@{}}12,864,430 			\cellvsp $-$0.3479\end{tabular}
& \begin{tabular}{@{}c@{}}13,991,407			\cellvsp $-$0.3424\end{tabular}
& \begin{tabular}{@{}c@{}}14,616,770 			\cellvsp $-$0.3377\end{tabular}
& \begin{tabular}{@{}c@{}}15,930,094 			\cellvsp $-$0.3376\end{tabular}
& \begin{tabular}{@{}c@{}}18,204,663 			\cellvsp $-$0.3362\end{tabular} \\
0.3
& \begin{tabular}{@{}c@{}}5,385,296 			\cellvsp $-$0.3499\end{tabular}
& \begin{tabular}{@{}c@{}}5,385,296 			\cellvsp $-$0.3499\end{tabular}
& \begin{tabular}{@{}c@{}}5,385,296				\cellvsp $-$0.3499\end{tabular}
& \begin{tabular}{@{}c@{}}5,385,296		 		\cellvsp $-$0.3499\end{tabular}
& \begin{tabular}{@{}c@{}}{\bf 5,340,256}	 	\cellvsp $-$0.3492\end{tabular}
& \begin{tabular}{@{}c@{}}5,757,081 			\cellvsp $-$0.3454\end{tabular}
& \begin{tabular}{@{}c@{}}6,042,307 			\cellvsp $-$0.3416\end{tabular}
& \begin{tabular}{@{}c@{}}5,796,230 			\cellvsp $-$0.3452\end{tabular}
& \begin{tabular}{@{}c@{}}6,330,829 			\cellvsp $-$0.3391\end{tabular} \\
\rowcolor{myLightGray}
0.4 
& \begin{tabular}{@{}c@{}}3,587,223 			\cellvsp $-$0.3457\end{tabular}
& \begin{tabular}{@{}c@{}}3,587,223 			\cellvsp $-$0.3457\end{tabular}
& \begin{tabular}{@{}c@{}}3,587,223 			\cellvsp $-$0.3457\end{tabular}
& \begin{tabular}{@{}c@{}}3,587,223 			\cellvsp $-$0.3457\end{tabular}
& \begin{tabular}{@{}c@{}}3,626,569 			\cellvsp $-$0.3461\end{tabular}
& \begin{tabular}{@{}c@{}}3,432,938 			\cellvsp $-$0.3467\end{tabular}
& \begin{tabular}{@{}c@{}}3,338,087 			\cellvsp $-$0.3442\end{tabular}
& \begin{tabular}{@{}c@{}}{\bf 3,086,323}	 	\cellvsp $-$0.3473\end{tabular}
& \begin{tabular}{@{}c@{}}3,165,582 			\cellvsp $-$0.3425\end{tabular} \\
0.5 
& \begin{tabular}{@{}c@{}}2,874,852 			\cellvsp $-$0.3429\end{tabular}
& \begin{tabular}{@{}c@{}}2,874,852 			\cellvsp $-$0.3429\end{tabular}
& \begin{tabular}{@{}c@{}}2,874,852 			\cellvsp $-$0.3429\end{tabular}
& \begin{tabular}{@{}c@{}}2,874,852 			\cellvsp $-$0.3429\end{tabular}
& \begin{tabular}{@{}c@{}}2,874,852 			\cellvsp $-$0.3429\end{tabular}
& \begin{tabular}{@{}c@{}}2,380,185 			\cellvsp $-$0.3464\end{tabular}
& \begin{tabular}{@{}c@{}}2,560,036 			\cellvsp $-$0.3451\end{tabular}
& \begin{tabular}{@{}c@{}}{\bf 2,081,426} 		\cellvsp $-$0.3465\end{tabular}
& \begin{tabular}{@{}c@{}}2,582,765 			\cellvsp $-$0.3430\end{tabular} \\
\rowcolor{myLightGray}
0.6 
& \begin{tabular}{@{}c@{}}2,883,427 			\cellvsp $-$0.3383\end{tabular}
& \begin{tabular}{@{}c@{}}2,883,427 			\cellvsp $-$0.3383\end{tabular}
& \begin{tabular}{@{}c@{}}2,883,427 			\cellvsp $-$0.3383\end{tabular}
& \begin{tabular}{@{}c@{}}2,883,427 			\cellvsp $-$0.3383\end{tabular}
& \begin{tabular}{@{}c@{}}2,883,427 			\cellvsp $-$0.3383\end{tabular}
& \begin{tabular}{@{}c@{}}2,259,538 			\cellvsp $-$0.3411\end{tabular}
& \begin{tabular}{@{}c@{}}2,307,901 			\cellvsp $-$0.3421\end{tabular}
& \begin{tabular}{@{}c@{}}{\bf 1,764,686}	 	\cellvsp $-$0.3422\end{tabular}
& \begin{tabular}{@{}c@{}}2,078,219 			\cellvsp $-$0.3415\end{tabular} \\
0.7 
& \begin{tabular}{@{}c@{}}3,157,697 			\cellvsp $-$0.3292\end{tabular}
& \begin{tabular}{@{}c@{}}3,157,697 			\cellvsp $-$0.3292\end{tabular}
& \begin{tabular}{@{}c@{}}3,157,697 			\cellvsp $-$0.3292\end{tabular}
& \begin{tabular}{@{}c@{}}3,157,697 			\cellvsp $-$0.3292\end{tabular}
& \begin{tabular}{@{}c@{}}3,157,697 			\cellvsp $-$0.3292\end{tabular}
& \begin{tabular}{@{}c@{}}2,146,095 			\cellvsp $-$0.3350\end{tabular}
& \begin{tabular}{@{}c@{}}1,973,460 			\cellvsp $-$0.3383\end{tabular}
& \begin{tabular}{@{}c@{}}1,966,801 			\cellvsp $-$0.3389\end{tabular}
& \begin{tabular}{@{}c@{}}$\mathbf{1,488,993^\star}$ \cellvsp $-$0.3398\end{tabular} \\
\rowcolor{myLightGray}
0.8 
& \begin{tabular}{@{}c@{}}3,381,315 			\cellvsp $-$0.3237\end{tabular}
& \begin{tabular}{@{}c@{}}3,381,315 			\cellvsp $-$0.3237\end{tabular}
& \begin{tabular}{@{}c@{}}3,381,315 			\cellvsp $-$0.3237\end{tabular}
& \begin{tabular}{@{}c@{}}3,381,315 			\cellvsp $-$0.3237\end{tabular}
& \begin{tabular}{@{}c@{}}3,381,315		 		\cellvsp $-$0.3237\end{tabular}
& \begin{tabular}{@{}c@{}}2,613,691 			\cellvsp $-$0.3320\end{tabular}
& \begin{tabular}{@{}c@{}}1,641,372 			\cellvsp $-$0.3355\end{tabular}
& \begin{tabular}{@{}c@{}}{\bf 1,549,138} 		\cellvsp $-$0.3369\end{tabular}
& \begin{tabular}{@{}c@{}}1,720,006 			\cellvsp $-$0.3378\end{tabular} \\
0.9 
& \begin{tabular}{@{}c@{}}4,886,790 			\cellvsp $-$0.3153\end{tabular}
& \begin{tabular}{@{}c@{}}4,886,790 			\cellvsp $-$0.3153\end{tabular}
& \begin{tabular}{@{}c@{}}4,886,790 			\cellvsp $-$0.3153\end{tabular}
& \begin{tabular}{@{}c@{}}4,886,790 			\cellvsp $-$0.3153\end{tabular}
& \begin{tabular}{@{}c@{}}4,886,790 			\cellvsp $-$0.3153\end{tabular}
& \begin{tabular}{@{}c@{}}3,708,374 			\cellvsp $-$0.3205\end{tabular}
& \begin{tabular}{@{}c@{}}2,288,775 			\cellvsp $-$0.3246\end{tabular}
& \begin{tabular}{@{}c@{}}2,071,551 			\cellvsp $-$0.3282\end{tabular}
& \begin{tabular}{@{}c@{}}{\bf 1,993,972} 		\cellvsp $-$0.3334\end{tabular} \\
\noalign{\hrule height 1.0pt}
\end{tabular}
\vspace{8pt}
\caption{
Computational cost (top of the cell) and empirical convergence rates (bottom of the cell)
for Algorithm~\ref{algorithm}.\ref{marking:B} applied to the parametric model problem
in Section~\ref{sec:numer:results}.
}
\label{tab:fulldata:B}
}
\end{center} 
\end{table}

\begin{table}[t!]
\begin{center} 
\setlength\tabcolsep{2.8pt} 
\smallfontthree{
\renewcommand{\arraystretch}{1.50} 
\begin{tabular}{c |*{8}{c} c } 
\noalign{\hrule height 1.0pt}
\multicolumn{10}{c}{ {\bf Algorithm~\ref{algorithm}.C} } \\
\noalign{\hrule height 1.0pt}
\backslashbox{$\thetaX$}{$\thetaP$} 
& 0.1 & 0.2 & 0.3 & 0.4 & 0.5 & 0.6 & 0.7 & 0.8 & 0.9 \\
\hline\\[-13pt]
0.1 
& \begin{tabular}{@{}c@{}}37,126,693 			\cellvsp $-$0.3412\end{tabular}
& \begin{tabular}{@{}c@{}}38,436,652 			\cellvsp $-$0.3392\end{tabular}
& \begin{tabular}{@{}c@{}}{\bf 31,766,942}		\cellvsp $-$0.3391\end{tabular}
& \begin{tabular}{@{}c@{}}38,948,918 			\cellvsp $-$0.3398\end{tabular}
& \begin{tabular}{@{}c@{}}40,891,821 			\cellvsp $-$0.3400\end{tabular}
& \begin{tabular}{@{}c@{}}35,855,809 			\cellvsp $-$0.3397\end{tabular}
& \begin{tabular}{@{}c@{}}30,252,882 			\cellvsp $-$0.3397\end{tabular}
& \begin{tabular}{@{}c@{}}44,306,077 			\cellvsp $-$0.3389\end{tabular}
& \begin{tabular}{@{}c@{}}47,582,801 			\cellvsp $-$0.3342\end{tabular} \\
\rowcolor{myLightGray}
0.2 
& \begin{tabular}{@{}c@{}}10,472,434 			\cellvsp $-$0.3401\end{tabular}
& \begin{tabular}{@{}c@{}}10,293,846 			\cellvsp $-$0.3388\end{tabular}
& \begin{tabular}{@{}c@{}}8,743,434 			\cellvsp $-$0.3388\end{tabular}
& \begin{tabular}{@{}c@{}}10,842,902 			\cellvsp $-$0.3395\end{tabular}
& \begin{tabular}{@{}c@{}}10,790,957 			\cellvsp $-$0.3397\end{tabular}
& \begin{tabular}{@{}c@{}}9,833,369 			\cellvsp $-$0.3395\end{tabular}
& \begin{tabular}{@{}c@{}}{\bf 8,317,634} 		\cellvsp $-$0.3395\end{tabular}
& \begin{tabular}{@{}c@{}}12,082,564			\cellvsp $-$0.3389\end{tabular}
& \begin{tabular}{@{}c@{}}12,942,155 			\cellvsp $-$0.3338\end{tabular}\\  
0.3
& \begin{tabular}{@{}c@{}}5,398,269 			\cellvsp $-$0.3392\end{tabular}
& \begin{tabular}{@{}c@{}}5,386,660 			\cellvsp $-$0.3382\end{tabular}
& \begin{tabular}{@{}c@{}}4,609,593 			\cellvsp $-$0.3384\end{tabular}
& \begin{tabular}{@{}c@{}}5,491,501 			\cellvsp $-$0.3390\end{tabular}
& \begin{tabular}{@{}c@{}}5,194,711 			\cellvsp $-$0.3390\end{tabular}
& \begin{tabular}{@{}c@{}}4,573,863 			\cellvsp $-$0.3391\end{tabular}
& \begin{tabular}{@{}c@{}}{\bf 4,270,672} 		\cellvsp $-$0.3390\end{tabular}
& \begin{tabular}{@{}c@{}}6,113,283 			\cellvsp $-$0.3382\end{tabular}
& \begin{tabular}{@{}c@{}}5,957,047 			\cellvsp $-$0.3334\end{tabular}\\  
\rowcolor{myLightGray}
0.4 
& \begin{tabular}{@{}c@{}}3,657,156 			\cellvsp $-$0.3382\end{tabular}
& \begin{tabular}{@{}c@{}}3,573,880 			\cellvsp $-$0.3372\end{tabular}
& \begin{tabular}{@{}c@{}}3,169,527 			\cellvsp $-$0.3373\end{tabular}
& \begin{tabular}{@{}c@{}}3,738,567 			\cellvsp $-$0.3384\end{tabular}
& \begin{tabular}{@{}c@{}}3,352,712 			\cellvsp $-$0.3384\end{tabular}
& \begin{tabular}{@{}c@{}}3,024,178 			\cellvsp $-$0.3380\end{tabular}
& \begin{tabular}{@{}c@{}}{\bf 2,634,872} 		\cellvsp $-$0.3378\end{tabular}
& \begin{tabular}{@{}c@{}}4,146,897 			\cellvsp $-$0.3370\end{tabular}
& \begin{tabular}{@{}c@{}}3,567,033 			\cellvsp $-$0.3325\end{tabular}\\  
0.5 
& \begin{tabular}{@{}c@{}}2,523,497 			\cellvsp $-$0.3374\end{tabular}
& \begin{tabular}{@{}c@{}}2,380,561 			\cellvsp $-$0.3367\end{tabular}
& \begin{tabular}{@{}c@{}}2,459,900 			\cellvsp $-$0.3368\end{tabular}
& \begin{tabular}{@{}c@{}}2,521,302 			\cellvsp $-$0.3381\end{tabular}
& \begin{tabular}{@{}c@{}}2,102,539 			\cellvsp $-$0.3379\end{tabular}
& \begin{tabular}{@{}c@{}}2,260,210	 			\cellvsp $-$0.3374\end{tabular}
& \begin{tabular}{@{}c@{}}{\bf 1,847,454} 		\cellvsp $-$0.3371\end{tabular}
& \begin{tabular}{@{}c@{}}2,721,150 			\cellvsp $-$0.3369\end{tabular}
& \begin{tabular}{@{}c@{}}2,572,804 			\cellvsp $-$0.3320\end{tabular}\\  
\rowcolor{myLightGray}
0.6 
& \begin{tabular}{@{}c@{}}2,323,857 			\cellvsp $-$0.3384\end{tabular}
& \begin{tabular}{@{}c@{}}2,168,310 			\cellvsp $-$0.3377\end{tabular}
& \begin{tabular}{@{}c@{}}2,271,816 			\cellvsp $-$0.3382\end{tabular}
& \begin{tabular}{@{}c@{}}2,331,065 			\cellvsp $-$0.3389\end{tabular}
& \begin{tabular}{@{}c@{}}1,828,574 			\cellvsp $-$0.3384\end{tabular}
& \begin{tabular}{@{}c@{}}2,004,779 			\cellvsp $-$0.3390\end{tabular}
& \begin{tabular}{@{}c@{}}{\bf 1,533,861} 		\cellvsp $-$0.3386\end{tabular}
& \begin{tabular}{@{}c@{}}2,528,526 			\cellvsp $-$0.3378\end{tabular}
& \begin{tabular}{@{}c@{}}2,294,306 			\cellvsp $-$0.3325\end{tabular}\\  
0.7 
& \begin{tabular}{@{}c@{}}2,094,382 			\cellvsp $-$0.3380\end{tabular}
& \begin{tabular}{@{}c@{}}1,891,752 			\cellvsp $-$0.3375\end{tabular}
& \begin{tabular}{@{}c@{}}1,970,087 			\cellvsp $-$0.3376\end{tabular}
& \begin{tabular}{@{}c@{}}2,014,430 			\cellvsp $-$0.3399\end{tabular}
& \begin{tabular}{@{}c@{}}$\mathbf{1,496,851^\star}$ \cellvsp $-$0.3393\end{tabular}
& \begin{tabular}{@{}c@{}}1,710,029 			\cellvsp $-$0.3383\end{tabular}
& \begin{tabular}{@{}c@{}}1,793,937 			\cellvsp $-$0.3383\end{tabular}
& \begin{tabular}{@{}c@{}}2,185,402 			\cellvsp $-$0.3377\end{tabular}
& \begin{tabular}{@{}c@{}}1,837,025 			\cellvsp $-$0.3325\end{tabular}\\  
\rowcolor{myLightGray}
0.8 
& \begin{tabular}{@{}c@{}}2,403,912				\cellvsp $-$0.3354\end{tabular}
& \begin{tabular}{@{}c@{}}2,162,469 			\cellvsp $-$0.3347\end{tabular}
& \begin{tabular}{@{}c@{}}2,240,383 			\cellvsp $-$0.3362\end{tabular}
& \begin{tabular}{@{}c@{}}{\bf 1,560,286} 		\cellvsp $-$0.3363\end{tabular}
& \begin{tabular}{@{}c@{}}1,645,652 			\cellvsp $-$0.3370\end{tabular}
& \begin{tabular}{@{}c@{}}1,940,368 			\cellvsp $-$0.3368\end{tabular}
& \begin{tabular}{@{}c@{}}2,048,616 			\cellvsp $-$0.3370\end{tabular}
& \begin{tabular}{@{}c@{}}1,620,466 			\cellvsp $-$0.3353\end{tabular}
& \begin{tabular}{@{}c@{}}2,089,746				\cellvsp $-$0.3297\end{tabular}\\  
0.9 
& \begin{tabular}{@{}c@{}}2,891,115 			\cellvsp $-$0.3308\end{tabular}
& \begin{tabular}{@{}c@{}}2,621,348 			\cellvsp $-$0.3295\end{tabular}
& \begin{tabular}{@{}c@{}}2,830,679 			\cellvsp $-$0.3283\end{tabular}
& \begin{tabular}{@{}c@{}}{\bf 1,776,192} 		\cellvsp $-$0.3321\end{tabular}
& \begin{tabular}{@{}c@{}}1,885,067 			\cellvsp $-$0.3329\end{tabular}
& \begin{tabular}{@{}c@{}}2,470,591 			\cellvsp $-$0.3282\end{tabular}
& \begin{tabular}{@{}c@{}}2,619,845 			\cellvsp $-$0.3285\end{tabular}
& \begin{tabular}{@{}c@{}}1,880,106 			\cellvsp $-$0.3295\end{tabular}
& \begin{tabular}{@{}c@{}}2,611,297 			\cellvsp $-$0.3231\end{tabular}\\
\noalign{\hrule height 1.0pt}
\end{tabular}
\vspace{8pt}
\caption{
Computational cost (top of the cell) and empirical convergence rates (bottom of the cell)
for Algorithm~\ref{algorithm}.\ref{marking:C} applied to the parametric model problem
in Section~\ref{sec:numer:results}.
}
\label{tab:fulldata:C}
}
\end{center} 
\end{table}

\begin{table}[t!]
\begin{center} 
\setlength\tabcolsep{2.5pt} 
\smallfontthree{
\renewcommand{\arraystretch}{1.50}
\begin{tabular}{c |*{8}{c} c } 
\noalign{\hrule height 1.0pt}
\multicolumn{10}{c}{ {\bf Algorithm~\ref{algorithm}.D} } \\
\noalign{\hrule height 1.0pt}
\backslashbox{$\thetaX$}{$\thetaP$} 
& 0.1 & 0.2 & 0.3 & 0.4 & 0.5 & 0.6 & 0.7 & 0.8 & 0.9 \\
\hline\\[-13pt]
0.1 
& \begin{tabular}{@{}c@{}}{\bf 65,375,862} \cellvsp $-$0.3482\end{tabular}
& \begin{tabular}{@{}c@{}}78,893,946 \cellvsp $-$0.3367\end{tabular}
& \begin{tabular}{@{}c@{}}82,752,064 \cellvsp $-$0.3375\end{tabular}
& \begin{tabular}{@{}c@{}}86,536,330 \cellvsp $-$0.3383\end{tabular}
& \begin{tabular}{@{}c@{}}85,276,880 \cellvsp $-$0.3389\end{tabular}
& \begin{tabular}{@{}c@{}}94,093,402 \cellvsp $-$0.3378\end{tabular}
& \begin{tabular}{@{}c@{}}$>1$e+$08$ \cellvsp $-$0.3339\end{tabular} 
& \begin{tabular}{@{}c@{}}$>1$e+$08$ \cellvsp $-$0.3285\end{tabular} 
& \begin{tabular}{@{}c@{}}$>1$e+$08$ \cellvsp $-$0.3235\end{tabular}\\ 
\rowcolor{myLightGray}
0.2 
& \begin{tabular}{@{}c@{}}{\bf 11,852,403} \cellvsp $-$0.3516\end{tabular}
& \begin{tabular}{@{}c@{}}12,956,995 \cellvsp $-$0.3447\end{tabular}
& \begin{tabular}{@{}c@{}}13,658,830 \cellvsp $-$0.3427\end{tabular}
& \begin{tabular}{@{}c@{}}14,138,022 \cellvsp $-$0.3475\end{tabular}
& \begin{tabular}{@{}c@{}}14,888,877 \cellvsp $-$0.3441\end{tabular}
& \begin{tabular}{@{}c@{}}16,118,678 \cellvsp $-$0.3443\end{tabular}
& \begin{tabular}{@{}c@{}}17,014,103 \cellvsp $-$0.3412\end{tabular}
& \begin{tabular}{@{}c@{}}17,774,020 \cellvsp $-$0.3370\end{tabular}
& \begin{tabular}{@{}c@{}}22,821,433 \cellvsp $-$0.3294\end{tabular} \\  
0.3
& \begin{tabular}{@{}c@{}}5,393,465 \cellvsp $-$0.3482\end{tabular}
& \begin{tabular}{@{}c@{}}{\bf 5,187,233} \cellvsp $-$0.3503\end{tabular}
& \begin{tabular}{@{}c@{}}5,976,264 \cellvsp $-$0.3465\end{tabular}
& \begin{tabular}{@{}c@{}}5,607,496 \cellvsp $-$0.3462\end{tabular}
& \begin{tabular}{@{}c@{}}6,018,687 \cellvsp $-$0.3443\end{tabular}
& \begin{tabular}{@{}c@{}}6,200,892 \cellvsp $-$0.3415\end{tabular}
& \begin{tabular}{@{}c@{}}6,737,413 \cellvsp $-$0.3383\end{tabular}
& \begin{tabular}{@{}c@{}}6,628,919 \cellvsp $-$0.3354\end{tabular}
& \begin{tabular}{@{}c@{}}8,667,208 \cellvsp $-$0.3279\end{tabular} \\  
\rowcolor{myLightGray}
0.4 
& \begin{tabular}{@{}c@{}}3,359,537 \cellvsp $-$0.3466\end{tabular}
& \begin{tabular}{@{}c@{}}2,993,784 \cellvsp $-$0.3499\end{tabular}
& \begin{tabular}{@{}c@{}}{\bf 2,968,892} \cellvsp $-$0.3484\end{tabular}
& \begin{tabular}{@{}c@{}}3,086,323 \cellvsp $-$0.3473\end{tabular}
& \begin{tabular}{@{}c@{}}3,280,115 \cellvsp $-$0.3460\end{tabular}
& \begin{tabular}{@{}c@{}}3,526,098 \cellvsp $-$0.3446\end{tabular}
& \begin{tabular}{@{}c@{}}3,229,531 \cellvsp $-$0.3419\end{tabular}
& \begin{tabular}{@{}c@{}}3,942,973 \cellvsp $-$0.3367\end{tabular}
& \begin{tabular}{@{}c@{}}5,087,723 \cellvsp $-$0.3298\end{tabular} \\  
0.5 
& \begin{tabular}{@{}c@{}}2,380,185 \cellvsp $-$0.3464\end{tabular}
& \begin{tabular}{@{}c@{}}2,317,914 \cellvsp $-$0.3467\end{tabular}
& \begin{tabular}{@{}c@{}}2,570,641 \cellvsp $-$0.3466\end{tabular}
& \begin{tabular}{@{}c@{}}{\bf 2,081,426} \cellvsp $-$0.3465\end{tabular}
& \begin{tabular}{@{}c@{}}2,294,857 \cellvsp $-$0.3456\end{tabular}
& \begin{tabular}{@{}c@{}}2,461,136 \cellvsp $-$0.3442\end{tabular}
& \begin{tabular}{@{}c@{}}2,727,436 \cellvsp $-$0.3422\end{tabular}
& \begin{tabular}{@{}c@{}}2,513,794 \cellvsp $-$0.3384\end{tabular}
& \begin{tabular}{@{}c@{}}3,221,702 \cellvsp $-$0.3316\end{tabular} \\  
\rowcolor{myLightGray}
0.6 
& \begin{tabular}{@{}c@{}}2,259,538 \cellvsp $-$0.3411\end{tabular}
& \begin{tabular}{@{}c@{}}2,163,842 \cellvsp $-$0.3402\end{tabular}
& \begin{tabular}{@{}c@{}}1,719,454 \cellvsp $-$0.3413\end{tabular}
& \begin{tabular}{@{}c@{}}1,764,686 \cellvsp $-$0.3422\end{tabular}
& \begin{tabular}{@{}c@{}}1,897,407 \cellvsp $-$0.3423\end{tabular}
& \begin{tabular}{@{}c@{}}2,067,075 \cellvsp $-$0.3412\end{tabular}
& \begin{tabular}{@{}c@{}}{\bf 1,672,508} \cellvsp $-$0.3395\end{tabular}
& \begin{tabular}{@{}c@{}}1,935,515 \cellvsp $-$0.3370\end{tabular}
& \begin{tabular}{@{}c@{}}2,563,621 \cellvsp $-$0.3309\end{tabular} \\  
0.7 
& \begin{tabular}{@{}c@{}}2,146,095 \cellvsp $-$0.3350\end{tabular}
& \begin{tabular}{@{}c@{}}1,952,007 \cellvsp $-$0.3347\end{tabular}
& \begin{tabular}{@{}c@{}}2,000,424 \cellvsp $-$0.3363\end{tabular}
& \begin{tabular}{@{}c@{}}1,966,801 \cellvsp $-$0.3389\end{tabular}
& \begin{tabular}{@{}c@{}}$\mathbf{1,460,210^\star}$ \cellvsp $-$0.3383\end{tabular}
& \begin{tabular}{@{}c@{}}1,604,638 \cellvsp $-$0.3389\end{tabular}
& \begin{tabular}{@{}c@{}}1,740,662 \cellvsp $-$0.3386\end{tabular}
& \begin{tabular}{@{}c@{}}2,050,900 \cellvsp $-$0.3370\end{tabular}
& \begin{tabular}{@{}c@{}}1,855,200 \cellvsp $-$0.3305\end{tabular} \\  
\rowcolor{myLightGray}
0.8 
& \begin{tabular}{@{}c@{}}2,613,691 \cellvsp $-$0.3320\end{tabular}
& \begin{tabular}{@{}c@{}}2,613,691 \cellvsp $-$0.3304\end{tabular}
& \begin{tabular}{@{}c@{}}2,429,679 \cellvsp $-$0.3331\end{tabular}
& \begin{tabular}{@{}c@{}}{\bf 1,549,138} \cellvsp $-$0.3368\end{tabular}
& \begin{tabular}{@{}c@{}}1,634,584 \cellvsp $-$0.3375\end{tabular}
& \begin{tabular}{@{}c@{}}1,806,369 \cellvsp $-$0.3381\end{tabular}
& \begin{tabular}{@{}c@{}}1,977,211 \cellvsp $-$0.3380\end{tabular}
& \begin{tabular}{@{}c@{}}1,621,561 \cellvsp $-$0.3349\end{tabular}
& \begin{tabular}{@{}c@{}}2,093,208 \cellvsp $-$0.3304\end{tabular} \\  
0.9 
& \begin{tabular}{@{}c@{}}3,708,374 \cellvsp $-$0.3205\end{tabular}
& \begin{tabular}{@{}c@{}}3,151,928 \cellvsp $-$0.3189\end{tabular}
& \begin{tabular}{@{}c@{}}3,183,738 \cellvsp $-$0.3249\end{tabular}
& \begin{tabular}{@{}c@{}}2,071,551 \cellvsp $-$0.3288\end{tabular}
& \begin{tabular}{@{}c@{}}1,885,067 \cellvsp $-$0.3329\end{tabular}
& \begin{tabular}{@{}c@{}}2,470,591 \cellvsp $-$0.3287\end{tabular}
& \begin{tabular}{@{}c@{}}2,386,770 \cellvsp $-$0.3311\end{tabular}
& \begin{tabular}{@{}c@{}}{\bf 1,880,106} \cellvsp $-$0.3295\end{tabular}
& \begin{tabular}{@{}c@{}}2,439,044 \cellvsp $-$0.3254\end{tabular} \\  
\noalign{\hrule height 1.0pt}
\end{tabular}
\vspace{8pt}
\caption{
Computational cost (top of the cell) and empirical convergence rates (bottom of the cell)
for Algorithm~\ref{algorithm}.\ref{marking:D} applied to the parametric model problem
in Section~\ref{sec:numer:results}.
}
\label{tab:fulldata:D}
}
\end{center} 
\end{table}

\bibliographystyle{alpha}
\bibliography{literature}

\end{document}